\documentclass{amsart}
\usepackage{amsmath}
  \usepackage{paralist}
  \usepackage{graphics} 
  \usepackage{epsfig} 
 \usepackage[colorlinks=true]{hyperref}
\hypersetup{urlcolor=blue, citecolor=red}

  \textheight=8.2 true in
   \textwidth=5.0 true in
    \topmargin 30pt
     \setcounter{page}{1}




\theoremstyle{definition}
\newtheorem{theor}{Theorem}[section]
\newtheorem{coro}{Corollary}
\newtheorem{lemma}[theor]{Lemma}
\newtheorem{prop}{Proposition}
\theoremstyle{definition}

\newtheorem{remark}{Remark}

\newcommand{\refq}[1]{~(\ref{#1})}

\newcommand{\egal}{\Longleftrightarrow}
\newcommand{\sauf}{\setminus}

\newcommand{\epsl}{\varepsilon} 
\newcommand{\bfbeta}{\boldsymbol{\beta}}
\newcommand{\bfepsl}{\boldsymbol{\epsl}} 
\newcommand{\bfmu}{\boldsymbol{\mu}} 
\newcommand{\bfalpha}{\boldsymbol{\alpha}}

\newcommand{\calA}{\mathcal A}
\newcommand{\calB}{\mathcal B}
\newcommand{\calC}{\mathcal C}
\newcommand{\calD}{\mathcal D}
\newcommand{\calE}{\mathcal E}
\newcommand{\calF}{\mathcal F}

\newcommand{\calH}{\mathcal H}
\newcommand{\calHD}{\calH^D}
\newcommand{\calK}{\mathcal K}
\newcommand{\calL}{\mathcal L}
\newcommand{\calO}{\mathcal O}

\newcommand{\calT}{\mathcal T}
\newcommand{\calU}{\mathcal U}

\newcommand{\calX}{\mathcal X}
\newcommand{\calZ}{\mathcal Z}

\newcommand{\rmd}{{\rm d}}
\newcommand{\sgn}{{\rm sgn}}
\newcommand{\supp}{{\rm supp}\,}

\newcommand{\Om}{\Omega}
\newcommand{\om}{\omega}
\newcommand{\ove}{\overline}

\newcommand{\C}{\mathbb C}

\newcommand{\N}{\mathbb N}
\newcommand{\R}{\mathbb R}
\newcommand{\Z}{{\rm Z\!\!\! Z}}
\newcommand{\Sm}{\mathbb S}
\newcommand{\T}{\mathbb T}

\title[Spectral analysis of the discrete Maxwell operator]{Spectral analysis of the discrete Maxwell operator: the Limiting Absorption Principle}
\author[Olivier Poisson]{}

\subjclass{Primary: 35Q61,47A10 ; Secondary: 47B25 }
\keywords{Maxwell operator, spectral analysis, limiting absorption, threshold, Mourre estimate.}

\email{olivier.poisson@univ-amu.fr}

\thanks{The author is grateful to Hiroshi Isozaki (University of Kyoto) for the suggestion of this work.}

\date{\today}

\begin{document}
\maketitle
\medskip
\centerline{\scshape Olivier Poisson }
{\footnotesize
 \centerline{ Aix-Marseille Universit\'{e}, France}
} 

\begin{abstract}
 We are interested by the spectral analysis of the anisotropic discrete Maxwell operator $\hat H^D$
 defined on the square lattice $\Z^3$.
 In aim to prove that the limiting absorption principle holds we construct a conjugate operator
 to the Fourier series of $\hat H^D$ at any not-zero real value.
 In addition we show that at some particular thresholds the conjugate operator is essentially 
 self-adjoint.
\end{abstract}

\pagestyle{myheadings}
\thispagestyle{plain}

\section{Introduction}
 We are interested in the limiting absorption principle (LAP) for the inhomogeneous discrete Maxwell
 operator $\hat H^D$ defined on the square lattice $\Z^3$, i.e.
 the limit in some usual or more abstract Besov spaces of the resolvents $\hat R(z)=(\hat H^D-z)^{-1}$
 when $z\in\C\sauf\R$ tends to a spectral value $\lambda$ of $\hat H^D$.
 It is equivalent and more convenient to deal with the Fourier series $H^D$ of $\hat H^D$
 instead of $\hat H^D$ since $H^D$ is a multiplication operator on the three-dimensional
 real torus $\T^3\approx (\R/(2\pi\Z))^3$.
 The operator $H^D$ is so represented by the analytically fibered self-adjoint operator
 $\T^3 \ni x\mapsto H^D(x)$ on an Hilbert space $\calHD$ where $H^D(x)$ is a $6\times 6$
 real matrix.
 We prove that the LAP holds by using the conjugate operator technique, as developed in
 the greatest generality by Gerard and Nier in a first version \cite{GER.MOU}.
 Actually, denoting $\Sigma=\{(\lambda,x)\,| \; \lambda\in \sigma(H^D(x))\}$ the energy-momentum set,
 where $\sigma(H^D(x))$ denotes the spectrum of an $H^D(x)$, we have the stratification
 $\Sigma= \cup_{j=1}^6 \Sigma_j$ where $\Sigma_i$ is the semi-analytical set of elements
 $(\lambda,x)$ for which $\lambda$ is an eigenvalue of multiplicity $i$ of $H^D(x)$ so
 we can introduce the (finite) set $\calT$ of thresholds.
 At this point we could simply apply \cite[Theorem 3.1]{GER.MOU} and claim that for any interval
 $I \subset\subset \R\sauf \calT$, there exists an operator $A_I$ with domain
 $\calC^\infty(\T^3;\C^6)$, essentially self-adjoint  on $\calHD$, conjugated to $H^D$, 
 and satisfying Points (i)-(iii) of \cite[Theorem 3.1]{GER.MOU}. 
 See Points (i)-(iv) of Theorem \ref{th.1}.
 However we go beyond this result. Moreover, the construction of the conjugate operator
 in \cite{GER.MOU} contains an error since, in fact, the analytically fibered self-adjoint operator
 for which the LAP is proved is not of class $\calC^2(A_I)$.
 The authors of \cite{GER.MOU} have recently corrected it in the new version \cite{GER.MOU2}
 at the cost of losing the (strict) globality of Mourre's inequality\refq{ineq.Mourre1}
 which has to be replaced be a local one as\refq{ineq.Mourre3}.
 Nevertheless, following the ideas of the old version \cite{GER.MOU} and the new version
 \cite{GER.MOU2} we construct an explicit conjugate operator $A_\phi$ to $H^D$ with
 the same properties than the operator $A_I$ of \cite[Theorem 3.1]{GER.MOU}.
 Here we prefer to use as parameter for the conjugate a numerical smooth function $\phi$
 instead of an interval $I$ so $A_\phi$ is conjugated to $H^D$ at each point of $\phi^{-1}(\R^*)$.
 
 We prove that the LAP holds at any nonzero threshold.
 Consequently, $H^D$ has no other eigenvalue than 0.
 This result is optimal since 0 is the obvious eigenvalue of $H^D$ so there is no conjugate
 operator to $H^D$ at 0. 
 Denoting by $\calX^*$ the finite set of points $x\in\T^3$ such that $\sigma(H^D(x))\cap\calT$
 is not reduced to the eigenvalue 0, we construct $A_\phi$ as a first order symmetric differential operator
 with smooth coefficients outside $\calX^*$ and with rational singularities at points
 $x\in\calX^*$ such that $\sigma(H^D(x)) \cap \calT \cap \phi^{-1}(\R^*)$ is not void.
 Then, the set of thresholds admits the partition $\calT=\calT_{sa} \cup \calT_{sm}$:
 let $x^*\in\calX^*$ and $\lambda\in \calT\cap \sigma(H^D(x^*))$, if $\lambda\in \calT_{sa}$
 then the local part $A_{x^*}$ of $A_\phi$ near $x^*$ is essentially self-adjoint, 
 and if $\lambda\in \calT_{sm}$ then $A_\phi$ near $x^*$ admits a maximal monotone extension.
 Thus, we obtain the validity of the LAP on $\R^*\sauf\calT_{sm}$ in the same terms as
 \cite[Theorem 3.3]{GER.MOU}.\\
 If $\lambda\in \phi^{-1}(\R^*)\cap \calT_{sm}$ then $A_\phi$ may not have a maximal monotone
 extension since its singularities may come from several points of $\calX^*$ and since
 the sum of two maximal monotone operators, even with disjoint supports, is not necessarily
 maximal monotone. Nevertheless we prove that the LAP holds on $\calT_{sm}$ by a slight extension
 of \cite[Theorem 3.3]{GEO.COM}.
 
\medskip

 In Part \ref{sec.2} we describe the operator $H^D$ and the Hilbert space $\calHD$
 on which it is self-adjoint.
 In Part \ref{sec.3} we state the main results of our work, notably, the existence of
 a conjugate operator $A_\phi$ for $H^D$ at Theorem \ref{th.1}.
 Then, we state that the LAP holds in many situations in terms of abstract or usual Besov spaces
 at Corollaries \ref{coro.1}-\ref{coro.5}.
 In Part \ref{sec.4} we describe the spectrum of $H^D(x)$, the stratification of
 the energy-momentum set $\Sigma$; we define the thresholds and describe $\calT$.
 It appears that $\calT$ depends mainly on the parameter $\bfbeta=\bfepsl\times\bfmu$
 where $\bfepsl$ and $\bfmu$ denote respectively the permittivity and the permeability.
 In fact outside the first quite basic case $\bfbeta=0$ we have to deal with many special cases
 for which the set $\calT$ is no more obvious.
 In Part \ref{sec.5} we construct the conjugate operator $A_\phi$ according to 
 the parameter $\bfbeta$. We prove also the main properties of $A_\phi$.
 In Part \ref{sec.6} we prove the main results (see Part \ref{sec.3}).
 
\medskip
 All along the text we use the following notations.
 Let $\calE\in\{\T^3,\R^3\}$ where $\T^3\approx (\R/(2\pi\Z))^3$ is the $3$-dimensional real torus.
 If $f$ is a numerical function (from $\R$ into $\R$)  we then denote by $f$ again the mapping
 $\calE \ni x \mapsto (f(x_1),f(x_2),f(x_3))\in \R^3$, so if $E\subset \calE$ then
 $f(E)=\{f(x);\; x\in E\}\subset \R^3$ and if $E\subset \R^3$ then $f^{-1}(E)=\{x\in\calE ;\; f(x)\in E\}$.
 In particular we set, for $x\in \calE$,
\begin{eqnarray*}
 y &=& \sin x :=(y_1=\sin x_1,y_2=\sin x_2,y_3=\sin x_3) \in \R^3, \\
 z &=& y^2 :=(z_1=y_1^2,z_2=y_2^2,z_3=y_3^2) \in \R^3.
\end{eqnarray*}
 (Although the letter $z$ will be also used as a complex energy, there is no possible confusion
 with the notation above.) The other notations are standard.
 If $E\subset \calE\in\{\T^3,\R^3\}$ and $\calF\in \{\R^n,\C^n\}$, we denote by $\calC^\infty_c(E,\calF)$
 the real space of $\calC^\infty$ functions with values in $\calF$, defined on $\calE$
 and with compact support in $E$.\\
 Let $T$ be a self-adjoint operator. We denote by $\sigma(T)$ the spectrum of $T$; 
 if $E\subset\R$ then $1_E(T)$  operator $\chi_{[a,b)}(T)$ where $\chi_J$ is the characteristic
 function of a set $J\subset\R$.\\
 If $X$ and $Y$ are two metrics spaces $B(X,Y)$ is the space of bounded operators from $X$
 into $Y$ and $B(X):=B(X,X)$. \\
%
 For $n\ge 1$ the space $\R^n$ (respect., $\C^n$) is equipped with the scalar product
 $<\cdot,\cdot>_{\R^n}$ (respect., with the hermitian product
$$
 <f,g>_{\C^n}  =   \sum_{j=1}^n f_j\ove{g_j}, \quad f=(f_j)_{1\le j \le n},\: g=(g_j)_{1\le j \le n}).
$$
 Full notations are at the end of the text, Part \ref{sec.notation}.

\section{The discrete Maxwell Operator}
\label{sec.2}
\subsection{The homogeneous discrete Maxwell Operator}
 Let $\Z^3=\{n=(n_1,n_2,n_3);\; n_j\in\Z\}$ the square lattice, $\T^3\approx (\R/(2\pi\Z))^3$
 the $3$-dimensional real torus, $U$ the unitary transform between $L^2(\T^3)$ and $l^2(\Z^3)$:
$$ (Uf)(n) := \hat f (n) \equiv (2\pi)^{-\frac32} \int_{\T^3} e^{inx} f(x)\rmd x  , \quad n\in\Z^3,$$
 so that any $f\in L^2(\T^3)$ can be written
$$
 f(x) = (U^* \hat f)(x) \equiv (2\pi)^{-\frac32}\sum_{n\in\Z^3} e^{-inx} \hat f(n), \quad x\in\T^3.
$$
 The homogeneous discrete Maxwell operator is the bounded operator $H_0$ on
 $\calH=(L^2(\T^3))^6$ defined by
$$ \hat H_0 = UH_0U^* , $$
 with $H_0(x)$ the real anti-symmetrical $6\times 6$ matrix:
$$ H_0(x) = \left(\begin{array}{cc} 0_{3\times3} & M(y)\\ -M(y) & 0_{3\times3} \end{array}\right) \in \R^6,  \quad x\in\T^3, $$ 
%
 where $y=\sin x$ and $M(y)$ is the real anti-symmetrical $3\times 3$ matrix:
\begin{equation}
\label{def.matM}
 M(y) = \left(\begin{array}{ccc}
  0 & -y_3 & y_2\\ y_3 &0 & -y_1\\ -y_2 & y_1 & 0 \end{array}\right) , \quad y\in \R^3.
\end{equation}
 The space $\calH=L^2(\T^3,\rmd x;\R^6)$ can be written as the hilbertian sum
$$ \calH =  \int_{\T^3}^{\oplus} \R^6 \rmd x,$$
 with the scalar product
$$ (u,v) :=  \int_{\T^3} <u(x),v(x)>_{\R^6} \rmd x.$$
 We then have
 $$ H_0 = \int_{\T^3}^{\oplus} H_0(x) \rmd x$$
 which shows that $H_0$ is a bounded self-adjoint operator on $\calH$.
 
 All along the article we consider that the vectorial functions $u$ and $v$ may be complex-valued
 so we denote by the same symbol $\calH$ the hilbertian space $\int_{\T^3}^{\oplus} \C^6 \rmd x$
 equipped with the hilbertian product
$$ (u,v) :=  \int_{\T^3} <u(x),v(x)>_{\C^6} \rmd x =  \int_{\T^3} \sum_{j=1}^6 u_j(x)\ove{v_j(x)} \rmd x$$
 and with norm $\|u\|:=(u,u)^{\frac12}$.
 
\subsection{The discrete Maxwell Operator}
 Let  $\bfepsl$ and $\bfmu$ be $3\times 3$ constant diagonal matrices with diagonal elements,
 respectively, $\epsl_1, \epsl_2, \epsl_3\in (0,\infty)$ for $\bfepsl$, and $\mu_1,\mu_2,\mu_3
 \in (0,\infty)$ for $\bfmu$.
 The inhomogeneous discrete-Maxwell operator is defined by
$$ \hat H^D = \hat D \hat H_0, $$
  where we set
$$
 \hat D = \left(\begin{array}{cc} \bfepsl & 0_{3\times3} \\ 0_{3\times3} &\bfmu
  \end{array}\right).
$$
 The (unperturbed) discrete Maxwell operator is $H^D=U^*\hat H^D U$ so we have,
 since $\hat D$ is constant,
$$
 H^D = U^*(\hat D \hat H_0)U = \hat D U^* \hat H_0 U = \hat D H_0.
$$
 The relation
$$
  (\hat D^{-1} H^D u,v) \, =\,  (H_0 u,v)
$$
 shows that the operator $H^D$ is bounded and self-adjoint on the space $\calHD=\calH$
 equipped with the hilbertian product
$$ (u,v)_{\calHD} := (\hat D^{-1}u,v) = \int_{\T^3} <\hat D^{-1}u(x),v(x)>_{\C^6} \rmd x $$
 and with norm $\|u\|_{\calHD}:=(u,u)_{\calHD}^{\frac12}$.
 (Since the norms $\|\cdot\|$ and $\|\cdot\|_{\calHD}$ are equivalent, we can omit the index
 "$ _{\calHD}$".)
 It appears that $H^D$ is a multiplication operator and we write
$$ H^D = \int_{\T^3}^{\oplus} H^D(x) \rmd x, $$
 where $H^D(x)$ is self-adjoint on $\C^6$ equipped with the hermitian product:
$$
   <u(x),v(x)>_{\C^6,D} := <\hat D^{-1}u(x),v(x)>_{\C^6}.
$$
 Since $H^D(x)$ depends only on $y=\sin(x)$ we write it $H^D(x)=h^D(y)$.
 It is known (see \cite{REE.MET}) that
\begin{equation}
\label{val1.sigma}
 \sigma(H^D) =  \ove{\cup_{x\in\T^3} \sigma(H^D(x))},
\end{equation}
 which is a compact set of $\R$.
\section{Results}
\label{sec.3}
\subsection{Main Theorem}
 We introduce the following settings:
\begin{eqnarray*}
 \calX_0 &:=& \{x\in \T^3;\, z=0\} , \quad  \T_0^3 = \T^3 \sauf \calX_0,\\
 \calX^* &:=& \{x\in\T^3; \: \sigma(H^D(x))\cap\calT\neq \{0\}\} \subset \T_0^3.
 \end{eqnarray*}
 We denote $\calD_0:=\calC_c^\infty(\T^3\sauf\calX^*)$ and $R(z):=(H^D-z)^{-1}$ the resolvent of $H^D$.
%
\begin{theor}
\label{th.1}
 Let $\calT'\subset\calT$ with $0\in\calT'$, and $\phi\in \calC^\infty_c(\R\sauf\calT')$.
 Then there exists a symmetric operator $A_{\phi}$ defined on $\calC^\infty(\T^3)$ if $\calT'= \calT$
 and on $\calD_0$ if $\calT'\neq \calT$, and satisfying the following properties:
\begin{itemize}
 \item[(i)] 
 There exists a constant $\delta=\delta(\phi)>0$ so that we have
\begin{equation}
\label{ineq.Mourre1}
\phi(H^D)[H^D,iA_{\phi}]\phi(H^D) \ge \delta \phi^2(H^D).
\end{equation}
 \item[(ii)] The multi-commutators $ad^k_{A_\phi}(H^D)$ are bounded for all $k\in \N$.
 \item[(iii)] The operator $A_{\phi}$ is a first order differential operator in $x$ whose coefficients
 belong to $\calC^\infty(\T^3;\calL(\C^6))$ if $\calT'= \calT$ and to 
 $\calC^\infty(\T^3\sauf\calX^*;\calL(\C^6))$ if $\calT' \neq \calT$,
 and there exists $\tilde\phi\in \calC_c^\infty(\R\sauf\calT')$ so that
 $A_{\phi}=\tilde\phi(H^D)A_{\phi}=A_{\phi}\tilde\phi(H^D)$.
 \item[(iv)] If $\calT_{sm}\subset\calT' $ then $A_\phi$ is essentially self-adjoint.
 \item[(v)] If $\calT'=\{0\}$ then $A_{\phi}$ has the form $\sum_{j=0}^3 A_j$ where
 $A_0$ has smooth coefficients and is essentially self-adjoint,
 $A_1$ and $A_2$ have coefficients with rational singularities at some points of $\calX^*$
 and, defined on the domain $\calD_0$, admit a maximal symmetric extension;
 moreover, $\supp A_1\cap \supp A_2=\emptyset$.
\end{itemize}
\end{theor}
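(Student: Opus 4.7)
The plan is to build $A_\phi$ as a matrix-valued first-order differential operator on $\T^3$ whose principal symbol pairs positively with $\nabla_x\lambda_j$ on each branch $\lambda_j(x)$ of the energy-momentum set intersecting $\supp\phi$, and then to sandwich it by a spectral cut-off $\tilde\phi(H^D)$ to promote this pointwise positivity into the global Mourre estimate~(i). Away from $\calX^*$, the stratification of $\Sigma$ from Part~\ref{sec.4} produces smooth eigenvalue branches, and by the very definition of $\calT$ every branch entering $\supp\phi$ has $\nabla_x\lambda_j(x)\neq 0$ there. Locally I would pick $v_j$ with $v_j\cdot\nabla_x\lambda_j\geq \delta_0>0$ (for instance $v_j=\nabla_x\lambda_j/|\nabla_x\lambda_j|^2$), glue by a partition of unity compatible with the fibrewise spectral projectors into a smooth matrix-valued field $V_0(x)\in\calL(\C^6)$ commuting with $h^D(\sin x)$, and set
\[
 A_0:=\tfrac{1}{2i}\bigl(V_0\cdot\nabla+\nabla\cdot V_0\bigr),
\]
pre- and post-multiplied by $\tilde\phi(H^D)$, with $\tilde\phi\in\calC_c^\infty(\R\sauf\calT')$ and $\tilde\phi\equiv 1$ on $\supp\phi$. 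The sandwich produces the last identity of~(iii); since the symbols of $A_0$ and of $\tilde\phi(H^D)$ are smooth, every iterated commutator $ad^k_{A_\phi}(H^D)$ has smooth bounded symbol, which gives~(ii).

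Near each $x^*\in\calX^*$, the branches classified in Part~\ref{sec.4} behave, depending on $\bfbeta$, like homogeneous (conical or higher-order) functions of $\sin x-\sin x^*$. In a local chart I would pick a Hamiltonian-type vector field adapted to that homogeneity---roughly $\xi/|\xi|^2$ in suitable coordinates---whose coefficients are rational in $\sin x$ with poles only at $x^*$; cut off by $\tilde\phi(H^D)$ and a smooth bump, this contributes a local piece $A_{x^*}$ to $A_\phi$. The paper's partition $\calT=\calT_{sa}\cup\calT_{sm}$ is precisely the classification of whether the resulting symmetric rational operator on $\calC_c^\infty$ near $x^*$ is essentially self-adjoint ($\calT_{sa}$) or only admits a maximal monotone extension ($\calT_{sm}$). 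The Mourre bound~(i) then reduces, on the $\lambda_j$-eigenspace, to the pointwise inequality $\phi^2(\lambda_j)(v_j\cdot\nabla_x\lambda_j)\geq\delta\,\phi^2(\lambda_j)$, with the uniform $\delta=\delta(\phi)$ coming from compactness of $\supp\phi$.

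The remaining items depend on the choice of $\calT'$. For~(iv), $\calT_{sm}\subset\calT'$ means every non-self-adjoint singular branch has been cut off, so on the support of $\tilde\phi(H^D)$ only $\calT_{sa}$-type singularities remain, and essential self-adjointness follows from a Nelson-type commutator argument against a scale comparable to $\dist(\cdot,\calX^*)^{-1}$, which controls the rational coefficients. For~(v), with $\calT'=\{0\}$ all of $\calX^*$ remains active, so I would group the finitely many singular points into two disjoint clusters to define the local singular pieces $A_1$ and $A_2$ with $\supp A_1\cap\supp A_2=\emptyset$, each a direct sum of maximal monotone local models; $A_0$ is the globally smooth contribution and $A_3$ absorbs the lower-order smooth remainders produced by commutators and by the partition-of-unity cut-offs. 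The main obstacle, signalled by the authors' discussion of the passage from \cite{GER.MOU} to \cite{GER.MOU2}, is retaining the $\calC^k(A_\phi)$ regularity of $H^D$ at $\calX^*$ while simultaneously keeping the Mourre bound uniform and the operator (essentially) self-adjoint; the combined device of the auxiliary cut-off $\tilde\phi$ together with explicit rational (rather than merely smooth) singular coefficients, developed in Part~\ref{sec.5}, is what reconciles all three requirements.
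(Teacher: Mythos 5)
Your high-level strategy — build $A_\phi$ from vector fields of the type $\nabla_x\lambda_j/|\nabla_x\lambda_j|^2$, glue so that the coefficients commute with $H^D(x)$ fibrewise, and add local models near $\calX^*$ — is the right skeleton and matches the paper's framework. But the proposal leaves out, or gets wrong, precisely the points that the paper identifies as the crux.

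First, and most importantly, you do not engage with the issue the authors flag as the error in \cite{GER.MOU}: for $H^D\in\calC^2(A_\phi)$ one needs $[H^D,iA_\phi]$ to be not merely bounded but a \emph{multiplication} operator. Your statement that one should ``glue by a partition of unity compatible with the fibrewise spectral projectors into a smooth matrix-valued field $V_0(x)$ commuting with $h^D(\sin x)$'' is aspirational: a field $V_0$ that commutes pointwise with $h^D$ must, away from $\calX_2$, be block-diagonal in the $\pi_1^\pm$-eigenspaces, and those projectors are \emph{not} smooth across $\calX_2$. The paper resolves this by using three cut-offs $\chi_1,\chi_2,\chi_3$ with $\chi_1\chi_2=0$ (Section~\ref{ssec.out}), so that the unique cross-commutator between the $\pi_1^\pm$-block and the $\pi_2$-block in $[H_{1,out},iA_{out}]$ vanishes identically (Section~\ref{ssc.smooth1}). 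Without isolating this term, the smoothness claim in (ii) does not follow, and a generic partition-of-unity gluing would reproduce the Gérard--Nier gap.

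Second, your treatment of self-adjointness is not the paper's, and in some cases would fail. The partition $\calT=\calT_{sa}\cup\calT_{sm}\cup\{0\}$ is determined by the Morse-type signature $(s_1,s_2,s_3)$ of the relevant eigenvalue branch at $x^*$ (Lemma~\ref{lem.dV}): a saddle (mixed signs) yields an essentially self-adjoint $A_{x^*}$, an extremum (constant sign) yields only a maximal symmetric $A_{x^*}$ with defect index $(N^+,0)$ or $(0,N^-)$. The paper proves this by explicit coordinate changes to polar/spherical-type coordinates (Lemma~\ref{lem1.Aadjoint}), reducing $A_{x^*}$ to $\pm i\tilde\chi\partial_{p_1}\tilde\chi$ on a half-line or full line. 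A ``Nelson-type commutator argument against a scale $\dist(\cdot,\calX^*)^{-1}$'', as you propose, does not capture the dichotomy and, for the extremal case, cannot succeed, since there essential self-adjointness genuinely fails. Relatedly, the split $A_\phi=A_0+A_1+A_2$ in (v) is not a choice of two arbitrary ``disjoint clusters'': $A_1$ collects the $x^*$ with defect index $(N^+,0)$ (together with the self-adjoint ones) and $A_2$ those with defect index $(0,N^-)$, so that each admits a maximal symmetric extension of the correct sign; and there is no $A_3$ ``absorbing remainders'' in the paper's construction. Finally, the relation $A_\phi=\tilde\phi(H^D)A_\phi$ in (iii) is not obtained by pre/post-multiplying by $\tilde\phi(H^D)$ (that would not be idempotent); it is a consequence of the spatial cut-offs, since $\tilde\phi(H^D)(x)=\pi_2(y)$ on $\supp A_\phi$ and the coefficients of $A_\phi$ commute with $\pi_2$.
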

\begin{remark}
%
\begin{itemize}
\item Since the coefficients of $A_\phi$ are smooth in $\T^3\sauf\calX^*$ and,
 since $\calD_0$ is dense in $\calHD$, then, for $u\in \calHD$, the distribution $A_\phi u$
 belongs to the topological dual space of $\calD_0$ so, since  $A_\phi$ is symmetric,
$$ A_\phi u\in \calHD \egal  |(u,A_\phi v)_{\calHD}| \le C\|v\| \quad \forall v\in \calD_0.$$
 Setting 
$$ \|u\|_{D(A_\phi)}:= \|u\|+ \|A_\phi u\| \quad \forall u \in \calD_0,$$
 the closure $\bar A_\phi$ of $A_\phi$ has domain
$$ D(\bar A_\phi):= \ove{\calD_0}^{\|\cdot\|_{D(A_\phi)}} .$$
 The adjoint of $\bar A_\phi$ or of $A_\phi$ is the operator $A_\phi \!^*$ with domain
 $D(A_\phi \!^*)=\{v\in \calHD$; $|(A_\phi u,v)_{\calHD}| \le C\|u\|$, $u\in D(A_\phi)\}$.
 If $\supp\phi\cap\calT=\emptyset$ then $A_\phi$ is essentially self-adjoint
 and its self-adjoint extension has domain $\ove{\calC^\infty(\T^3)}^{\|\cdot\|_{D(A_\phi)}}
  \supset H^1(\T^3)$ (with dense inclusion).

\item 
 Our set $\calT$ of thresholds may defer to the set of thresholds in \cite{GER.MOU}.
 Anyway, $\calT$ is a discrete (and finite) set.
 The sets $\calT$, $\calT_{sa}$ and $\calT_{sm}$ are precisely described in Part \ref{sec.taub}.
\item In the case $\calT'=\calT$, the result of \cite{GER.MOU} implies the existence of
 an essentially self-adjoint operator $A_I$ with smooth coefficients such that
 Points (i), (iii) and (iv) with $A_\phi$ replaced by $A_I$ hold.
 But the first commutator $[H^D,A_I]$ is not a multiplication operator so Point (ii) fails,
 and, in fact, $H^D\not\in \calC^{1,1}(A_I)$ (this set is defined at Point B
 of Corollary \ref{coro.1}).
 The new version \cite{GER.MOU2} of \cite{GER.MOU} provides an essentially self-adjoint
 operator $A_{I,I_1}$ with smooth coefficients such that Points (ii)$-$(iv) and a local (weaker)
 version of Point (i) are maintained (with $A_\phi$ replaced by $A_{I,I_1}$).
\item We give an explicite formula for $A_\phi$ which is easier to read than the general formula
 in \cite{GER.MOU2} (which is only valid in the case $\calT'=\calT$).
\end{itemize}
\end{remark}
\subsection{Main consequences and extensions}
 The first obvious consequence of Theorem \ref{th.1} is that the singular continuous spectrum
 of $H^D$ is then empty. 
 But it is actually a consequence of the general theorem in \cite{GER.MOU} revised in \cite{GER.MOU2}.
 
 The second consequence is that we can state the LAP outside $\calT_{sm}\cup\{0\}$
 in the same terms as those of G\'erard and Nier in the old version \cite{GER.MOU}
 of their work. See also \cite{MOU.ABS,PER.SPE}.
 Let us consider a compact interval $I\subset\R^*\sauf \calT_{sm}$, and fix
 $\phi\in \calC^\infty_c(\R^*\sauf\calT_{sm})$ such that $\phi=1$ on a neighborhood of $I$.
 We denote by $A_\phi^{sa}\subset A_\phi\!^*$ a self-adjoint extension of $A_\phi$. 
 We define the abstract Besov space $\calB_A$ by
$$
 \calB_A = \{ f\in \calH;\: \|f\|_{\calB_A} := \sum_{j=0}^\infty r_j^{1/2} \| 1_{r_{j-1}\le |A_\phi^{sa}|\le r_j}f\| < \infty\}.
$$
 Its dual space $\calB_A\!^*$ is the completion of $\calHD$ by the following norm
$$ \|u\|_{\calB_A\!^*} = \sup_{j\ge 0}\:  r_j^{1/2} \|1_{r_{j-1} \le | A_\phi^{sa} | < r_j}u\| . $$
 For $s > 1/2$, the following inclusion relations hold :
\begin{eqnarray*}
 D((1+|A_\phi^{sa}|)^{s}) \subset \calB_A \subset D((1+|A_\phi^{sa}|)^{1/2}) \subset \calHD  \\
  \subset D((1+|A_\phi^{sa}|)^{-1/2})  \subset \calB_A\!^* \subset D((1+|A_\phi^{sa}|)^{-s}).
\end{eqnarray*}
 We can claim
\begin{coro}
\label{coro.1}
 (LAP on $\R^*\sauf \calT_{sm}$ in abstract Besov spaces.) We have
$$
 \sup_{\lambda \in I, \, \mu >0} \| R(\lambda\pm i\mu)f\|_{\calB_A \!^*} \le C_I \|f\|_{\calB_A}
  \quad \forall f\in \calB_A .
$$
%
%
  Moreover letting $s > 1/2$ then the limits
$$ \lim_{\epsl\to \pm0} (1+|A_\phi^{sa}|)^{-s} R(\lambda +i\epsl)  (1+|A_\phi^{sa}|)^{-s} $$
 exist in $B(\calHD)$ and are bounded, with uniform convergence according to $\lambda\in I$.
 The mapping $\R\sauf\calT \ni \lambda \mapsto R(\lambda \pm i0)$ is norm continuous
 in $B(D((1+|A_\phi^{sa}|)^{s}, D((1+|A_\phi^{sa}|)^{-s})$ and weakly continuous in $B(\calB_A,\calB_A\!^*)$.
\end{coro}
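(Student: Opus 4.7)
The plan is to reduce the statement to the abstract Besov-space limiting absorption principle as formulated in \cite[Thm.~3.3]{GER.MOU} (see also \cite{MOU.ABS,PER.SPE}), with $A_\phi^{sa}$ as the conjugate operator; the bulk of the work has already been invested in Theorem \ref{th.1}, and what remains is essentially to check that its conclusions translate into the hypotheses of that abstract principle. First I would specialize Theorem \ref{th.1} to $\calT' := \calT_{sm}\cup\{0\}$, a set that contains $0$, lies inside $\calT$, and satisfies $\calT_{sm}\subset\calT'$. Then $\phi\in\calC_c^\infty(\R^*\sauf\calT_{sm})=\calC_c^\infty(\R\sauf\calT')$ is admissible in Theorem \ref{th.1}, Point (iv) applies, and $A_\phi^{sa}$ is simply the unique self-adjoint closure of $A_\phi$.

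Next I would verify the two structural hypotheses required by the abstract Besov-space LAP. A strict Mourre inequality on $I$ follows by sandwiching \eqref{ineq.Mourre1} with the spectral projection $1_I(H^D)$: since $\phi\equiv 1$ on a neighborhood of $I$, one has $\phi(H^D)1_I(H^D)=1_I(H^D)$, hence
$$
 1_I(H^D)[H^D,iA_\phi^{sa}]1_I(H^D) \ge \delta\, 1_I(H^D).
$$
The required regularity $H^D\in\calC^{1,1}(A_\phi^{sa})$ comes directly from Point (ii), since the global boundedness of every iterated commutator $ad_{A_\phi}^k(H^D)$ yields $H^D\in\calC^\infty(A_\phi^{sa})\subset\calC^{1,1}(A_\phi^{sa})$.

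With these two ingredients in hand, I would invoke the standard differential-inequality argument for the sandwiched resolvent $\langle A_\phi^{sa}\rangle^{-s}R(\lambda\pm i\mu)\langle A_\phi^{sa}\rangle^{-s}$, combined with the dyadic summation on the spectral shells of $A_\phi^{sa}$ defining the Besov norms, to derive both the uniform bound $\sup_{\lambda\in I,\mu>0}\|R(\lambda\pm i\mu)f\|_{\calB_A\!^*}\le C_I\|f\|_{\calB_A}$ and the existence of the boundary values in $B(\calHD)$ for $s>1/2$. The norm continuity of $\lambda\mapsto R(\lambda\pm i0)$ in $B(D((1+|A_\phi^{sa}|)^s),D((1+|A_\phi^{sa}|)^{-s}))$ and the weak continuity in $B(\calB_A,\calB_A\!^*)$ are obtained by applying the same estimates to second-resolvent identities, following \cite{PER.SPE,GER.MOU}. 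Continuity on the full set $\R\sauf\calT$ (rather than on the single $I$) is recovered by covering it with compact subintervals of $\R^*\sauf\calT_{sm}$ and adjusting $\phi$ accordingly.

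The main obstacle is precisely the verification of $H^D\in\calC^{1,1}(A_\phi^{sa})$, and this is where the global boundedness of the multi-commutators provided by Point (ii) of Theorem \ref{th.1} is essential, in contrast to the merely local version available in \cite{GER.MOU2}; everything else is a routine application of the abstract commutator machinery to the concrete pair $(H^D,A_\phi^{sa})$.
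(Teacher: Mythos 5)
Your proposal is correct and follows exactly the route the paper intends: specialize Theorem \ref{th.1} to $\calT'=\calT_{sm}\cup\{0\}$ so that Point (iv) gives essential self-adjointness of $A_\phi$, extract the strict Mourre inequality on $I$ by sandwiching \eqref{ineq.Mourre1} with $1_I(H^D)$ using $\phi\equiv 1$ near $I$, note that Point (ii) gives $H^D\in\calC^\infty(A_\phi^{sa})\subset\calC^{1,1}(A_\phi^{sa})$, and then invoke the abstract Besov-space LAP of \cite[Theorem 3.3]{GER.MOU} (cf.\ also \cite{MOU.ABS,PER.SPE}). The paper gives no further argument for Corollary \ref{coro.1} beyond this reduction, so what you supply is precisely the implicit proof.
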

 For further developments we establish also the LAP in terms of the usual Besov spaces
 described by Isozaki and alii \cite{AND.SPE} with the restriction to spectral values outside the thresholds.
 Thus, we consider the case $\supp\phi\subset\R\sauf\calT$ in Theorem \ref{th.1}.
 We set $N = (N_1,N_2 ,N_3)$, $N_j=i\partial/\partial x_j$ and the self-adjoint operators
$$ |N| = \sqrt{N^2} = \sqrt{-\Delta} , \quad N^2 = \sum_{j=1}^3 N_j^2 = -\Delta \quad {\rm on} \:  \T^3,$$
 where $\Delta$ denotes the Laplacian on $\T^3 = [-\pi,\pi]^3$ with periodic boundary condition.
 Let $D'(\T^3,\C)=(\calC^\infty(\T^3,\C))'$ be the space of distribution on $\T^3$ and consider
 $D'(\T^3,\C^6) \approx (D'(\T^3,\C))^6 \approx (D'(\T^3,\R))^{12} $.
 We introduce the normed spaces
$$ \calH^s = \{u \in D'(\T^3,\C^6),\: \|u\|_s < \infty \} ,  \quad \|u\|_s := \|(1 + N^2)^{s/2}u\| ,\quad s \in \R,$$
 so $\calH^s$ is the completion of $D(|N|^s)$, the domain of $|N|^s$, with respect to
 the norm $\|u\|_s$ and we have $\calHD = \calH^0 = L^2(\T^3,\C^6)$.
 For $s\ge 0$ and $u\in \calC^\infty(\T^3,\C^6)$ we have $\|(1+|A_\phi^{sa}|)^su\| \le C \|u\|_s$
 where $C$ does not depend on $u$. Thus, the following inclusion relations hold :
$$
 \calH^s \subset D((1+|A_\phi^{sa}|)^s) \subset \calHD  \subset  D((1+|A_\phi^{sa}|)^{-s})
  \subset \calH^{-s}  \quad \forall s\ge 0.
$$
 Using the sequence $(r_j)_{j\ge -1}$ where $r_{-1}=0$, $r_j=2^j$ for $j\ge 0$ we define
 the Besov space $\calB$ by
$$
 \calB := \{ f\in \calHD;\: \|f\|_{\calB} := \sum_{j=0}^\infty r_j^{1/2} \| 1_{r_{j-1}\le |N|\le r_j}f\| < \infty\}.
$$
 Its dual space $\calB^*$ is the completion of $\calH$ by the following norm
$$ \|u\|_{\calB^*} = \sup_{j\ge 0}\:  r_j^{1/2} \|1_{r_{j-1} \le |N| < r_j} u\| . $$
 For $s > 1/2$, the following inclusion relations hold :
$$
 \calH^s \subset \calB \subset \calH^{1/2} \subset \calHD  \subset \calH^{-1/2}
  \subset \calB^* \subset \calH^{-s}.
$$
 Moreover, Lemma 2.8 of \cite{INV.ISO1} says that there is a constant $C > 0$ such that
$$ \|f\|_{\calB_A} \le C\|f\|_{\calB} \quad  \forall f \in \calB ,$$
 that is, $\calB \subset \calB_A$, and so, $\calB_A\!^* \subset \calB^*$.
 Hence, Corollary \ref{coro.1} can be extended as
\begin{coro}
\label{coro.2}
 (LAP on $\R\sauf \calT$ in usual Besov spaces.)
 We have
$$ \sup_{\lambda\in I,\; \mu>0} \|R(\lambda \pm i\mu)\|_{B(\calB;\calB^*)} < \infty, $$
 for all compact set $I\subset \R\sauf\calT$. Moreover letting $s > 1/2$ then
 the limits
$$ R(\lambda \pm i0) := \lim_{\epsl\searrow 0}R(\lambda \pm i\epsl) \in B(\calH^s; \calH^{-s})  $$
 exist and are bounded, with uniform convergence according to $\lambda\in I$.
 The mapping $\R\sauf\calT \ni \lambda \mapsto R(\lambda \pm i0)$ is norm continuous
 in $B(\calH^s; \calH^{-s})$ and weakly continuous in $B(\calB;\calB^*)$.
\end{coro}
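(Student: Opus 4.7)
The plan is to deduce Corollary \ref{coro.2} as a direct consequence of Corollary \ref{coro.1} via the continuous embeddings between the abstract and usual Besov/Sobolev scales that are collected just before the statement. The entire argument is a transfer along bounded linear inclusions; no new spectral information about $H^D$ is used, so the "main obstacle" is simply keeping track of which embedding is invoked where. The sole nontrivial ingredient is Lemma 2.8 of \cite{INV.ISO1}, which is used as a black box and supplies $\calB\subset\calB_A$ (hence, by duality, $\calB_A\!^*\subset\calB^*$), together with the estimate $\|(1+|A_\phi^{sa}|)^s u\|\le C\|u\|_s$ for $s\ge 0$, which yields $\calH^s\subset D((1+|A_\phi^{sa}|)^s)$ and dually $D((1+|A_\phi^{sa}|)^{-s})\subset \calH^{-s}$.

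For the uniform bound on $R(\lambda\pm i\mu)$ in $B(\calB;\calB^*)$, I would take $f\in\calB$, $\lambda\in I$, $\mu>0$ and chain
$$
 \|R(\lambda\pm i\mu)f\|_{\calB^*} \;\le\; C\,\|R(\lambda\pm i\mu)f\|_{\calB_A\!^*}
 \;\le\; C\,C_I\,\|f\|_{\calB_A} \;\le\; C'\,\|f\|_{\calB},
$$
using the continuous inclusion $\calB_A\!^*\subset\calB^*$, Corollary \ref{coro.1}, and $\calB\subset\calB_A$ in turn. Taking the supremum in $\lambda,\mu$ gives the first assertion.

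For the existence of the boundary values in $B(\calH^s;\calH^{-s})$ and their uniform convergence in $\lambda\in I$, I would factor, for $s>1/2$,
$$
 R(\lambda\pm i\epsl) \;=\; (1+|A_\phi^{sa}|)^{-s}\,\Bigl[(1+|A_\phi^{sa}|)^{-s}R(\lambda\pm i\epsl)(1+|A_\phi^{sa}|)^{-s}\Bigr]\,(1+|A_\phi^{sa}|)^{-s},
$$
viewed as a map $\calH^s\to\calH^{-s}$: the two outer factors are bounded by the inclusions above, while the bracketed middle factor converges in $B(\calHD)$, uniformly in $\lambda\in I$, by Corollary \ref{coro.1}. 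Passing to the limit $\epsl\searrow 0$ defines $R(\lambda\pm i0)\in B(\calH^s;\calH^{-s})$ and transports the uniform convergence. The norm continuity of $\lambda\mapsto R(\lambda\pm i0)$ in $B(\calH^s;\calH^{-s})$ on $\R\sauf\calT$ follows in exactly the same way from the abstract norm continuity given by Corollary \ref{coro.1}.

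Finally, for the weak continuity in $B(\calB;\calB^*)$, I would fix $f,g\in\calB$ and use $\calB\subset\calB_A$ to view them as elements of $\calB_A$. The abstract weak continuity provided by Corollary \ref{coro.1} makes $\lambda\mapsto (R(\lambda\pm i0)f,g)_{\calHD}$ continuous on $\R\sauf\calT$, and combined with the uniform bound in $B(\calB;\calB^*)$ already established, this yields the required weak continuity. No genuine technical obstruction arises beyond the one cited embedding lemma.
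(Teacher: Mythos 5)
Your approach is exactly the paper's (implicit) one: the paper derives Corollary~\ref{coro.2} from Corollary~\ref{coro.1} purely by chaining the continuous embeddings $\calB\subset\calB_A$, $\calB_A\!^*\subset\calB^*$ (Lemma 2.8 of \cite{INV.ISO1}), and $\calH^s\subset D((1+|A_\phi^{sa}|)^s)$, $D((1+|A_\phi^{sa}|)^{-s})\subset\calH^{-s}$, which is precisely the transfer you describe. The $\calB\to\calB^*$ bound and the weak continuity argument are correct as you write them.

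However, your displayed factorization of the resolvent is algebraically wrong: as written it asserts
$R(\lambda\pm i\epsl)=(1+|A_\phi^{sa}|)^{-2s}R(\lambda\pm i\epsl)(1+|A_\phi^{sa}|)^{-2s}$,
which is false. The outer factors should carry the \emph{positive} power $s$, i.e. one should write, for $u,v\in\calH^s$,
\[
 (R(\lambda\pm i\epsl)u,v)_{\calHD}
 =\Bigl(\bigl[(1+|A_\phi^{sa}|)^{-s}R(\lambda\pm i\epsl)(1+|A_\phi^{sa}|)^{-s}\bigr](1+|A_\phi^{sa}|)^{s}u,\;(1+|A_\phi^{sa}|)^{s}v\Bigr)_{\calHD},
\]
using that $\calH^s\hookrightarrow D((1+|A_\phi^{sa}|)^s)$ is continuous, so that
\[
 \|R(\lambda\pm i\epsl)\|_{B(\calH^s,\calH^{-s})}\le C\,\bigl\|(1+|A_\phi^{sa}|)^{-s}R(\lambda\pm i\epsl)(1+|A_\phi^{sa}|)^{-s}\bigr\|_{B(\calHD)},
\]
and the right-hand side converges uniformly in $\lambda\in I$ by Corollary~\ref{coro.1}. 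Your stated conclusion (outer factors bounded, middle factor converging in $B(\calHD)$) is what follows from this corrected formula, so the slip is notational rather than conceptual, but as displayed it would not survive a referee's check.
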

 Another consequence of Theorem \ref{th.1} is the following extension of the LAP
 to any nonzero spectral value, thanks to a slight adaptation of \cite[Theorem 3.3]{GEO.COM}.
\begin{coro}
\label{coro.3}
 (LAP on $\R^*$.)
 Let $I\subset \R^*$ be a compact interval. There exists a constant $C_I$ such that
$$
  |(u,R(z) u)_{\calHD}| \le C_I  \|u\|^2_{D(\bar A_\phi)} \quad  \forall u\in D(\bar A_\phi),
$$
 for all $z=\lambda+i\mu$ with $\lambda\in I$, $\mu\neq 0$ real. Moreover if $z_1=\lambda_1+i\mu_1$,
 $z_2=\lambda_2+i\mu_2$ are two such numbers, and if $\mu_1$ and $\mu_2$ have the same sign,
 then
$$
  |(u,(R(z_1)-R(z_2)) u)_{\calHD}| \le C_I  |z_1-z_2|^{1/2} \|u\|^2_{D(\bar A_\phi)}  \quad  \forall u\in D(\bar A_\phi).
$$
 In particular, if $u\in D(\bar A_\phi)$ then the limits 
$$
 \lim_{\epsl\searrow 0^+} (u,R(\lambda \pm i\epsl)u)_{\calHD} =: (u,R(\lambda \pm i0)u)_{\calHD}  
$$
 exist uniformly in $\lambda\in I$, and, for all $\lambda_1,\lambda_2\in I$,
 we have
$$
 (u,(R(\lambda_1 \pm i0)-R(\lambda_2 \pm i0))u)_{\calHD} 
  \le C_I  |\lambda_1-\lambda_2|^{1/2} \|u\|^2_{D(\bar A_\phi)} .
$$
\end{coro}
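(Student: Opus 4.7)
The plan is to obtain the corollary as an application of (a slight variant of) the Mourre-type LAP theorem \cite[Theorem 3.3]{GEO.COM} with conjugate operator supplied by Theorem \ref{th.1}. Given the compact interval $I \subset \R^*$, I would choose $\phi \in \calC^\infty_c(\R^*)$ equal to $1$ on a neighborhood of $I$, which is possible since $I$ avoids $0$, and take $\calT' = \{0\}$ in Theorem \ref{th.1}. This produces a symmetric operator $A_\phi$ on $\calD_0$ satisfying the global Mourre estimate\refq{ineq.Mourre1}, the uniform boundedness of all iterated commutators $ad^k_{A_\phi}(H^D)$, and the decomposition $A_\phi = A_0 + A_1 + A_2$ of Point (v), with $A_0$ essentially self-adjoint and $A_1, A_2$ admitting maximal symmetric extensions on disjoint supports.

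With these ingredients the classical Mourre argument proceeds as follows. One introduces a regularizing contraction semigroup $G_\epsl$ constructed from $iA_\phi$, considers $F_\epsl(z) := (u, G_\epsl^* R(z) G_\epsl u)_{\calHD}$ for $u \in D(\bar A_\phi)$ and $\Ima z \neq 0$, and combines\refq{ineq.Mourre1} with the bounded multi-commutators to obtain a differential inequality of the form $|\partial_\epsl F_\epsl(z)| \le C(1 + |F_\epsl(z)|)/\sqrt{|\Ima z|}$. Integrating in $\epsl$ on a half-line and letting $\epsl \to 0$ yields the uniform a priori bound $|(u,R(z)u)_{\calHD}| \le C_I \|u\|^2_{D(\bar A_\phi)}$. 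The $1/2$-Hölder estimate and the existence of boundary values $R(\lambda \pm i0)$ then follow by the resolvent identity combined with the same bound and by the Cauchy criterion, as in \cite{GEO.COM}.

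The principal obstacle is the definition of $G_\epsl$: by Point (v), $A_\phi$ need not admit a maximal monotone extension, since the sum of two maximal monotone operators with disjoint supports is not necessarily maximal monotone. I would circumvent this precisely by exploiting the disjoint-support property $\supp A_1 \cap \supp A_2 = \emptyset$ given there. Using smooth cutoffs $\chi_k$ localized near $\supp A_k$ for $k=1,2$ and $\chi_0$ away from $\calX^*$, the maximal monotone extensions of $\chi_1 A_1$ and $\chi_2 A_2$ together with the self-adjoint closure of $\chi_0 A_0$ generate three pairwise commuting contraction semigroups acting essentially on orthogonal closed subspaces of $\calHD$; their product is a legitimate regularizer on $D(\bar A_\phi)$. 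The boundedness of $ad^k_{A_\phi}(H^D)$ then controls the error terms produced when commuting $G_\epsl$ past $H^D$, so the Mourre differential inequality closes and the adaptation of \cite[Theorem 3.3]{GEO.COM} promised by the author goes through.
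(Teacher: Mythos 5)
Your high-level route is the right one: invoke Theorem \ref{th.1} with $\calT'=\{0\}$ and $\phi=1$ near $I$, obtain the decomposition $A_\phi=A_0+A_1+A_2$ of Point (v), and adapt \cite[Theorem 3.3]{GEO.COM} to a conjugate operator that admits only maximal symmetric (not self-adjoint) extensions. The difficulty you identify — that the sum of maximal monotone operators need not be maximal monotone — is also exactly the one the paper confronts. However, the technical device you propose to overcome it has a genuine gap.

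The issue is your construction of a single regularizer $G_\epsl$ as a ``product of three pairwise commuting contraction semigroups acting essentially on orthogonal closed subspaces.'' Neither the commutativity nor the orthogonality holds. The coefficients of $A_0=A_{out}$ are supported on $\supp\chi^*\subset\{x:\rmd_0(x,\calX^*)>b/2\}$, while $A_1$ and $A_2$ have coefficients supported on $\cup_{x^*}\{\rmd_0(x,x^*)<b\}$; these regions overlap in the annular transitions $b/2<\rmd_0(x,x^*)<b$, so the three first-order differential operators do not act on orthogonal subspaces, and the semigroups generated by (the maximal symmetric extensions of) $A_0$, $A_1$, $A_2$ do not commute. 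Moreover, replacing $A_0$ by $\chi_0 A_0$ destroys the identity $A_\phi=\sum_j A_j$, so the Mourre estimate\refq{ineq.Mourre1} — which is stated for the commutator with the full $A_\phi$ — no longer transfers to the modified sum. Even setting these points aside, if $G_\epsl=W^{(1)}_\epsl W^{(2)}_\epsl W^{(0)}_\epsl$ is a product of non-commuting semigroups, then $\partial_\epsl G_\epsl$ is not of the form $\pm iA_\phi G_\epsl$ plus a controllable remainder, so the Mourre differential inequality for $F_\epsl(z)=(u,G_\epsl^*R(z)G_\epsl u)$ does not close.

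The paper's adaptation of \cite[Theorem 3.3]{GEO.COM} sidesteps the construction of a single $G_\epsl$ entirely. Instead of a semigroup regularization of the resolvent, one uses the dissipative regularization $H_\epsl:=H^D-i\epsl H'$ with $H':=[H^D,iA_\phi]^\circ$ a bounded self-adjoint multiplication operator, and studies $R_\epsl(z):=(H_\epsl-z)^{-1}$. The derivative identity
\begin{equation*}
\frac{\rmd R_\epsl(z)}{\rmd\epsl}=[R_\epsl(z),iA_\phi]^\circ-\epsl R_\epsl(z)H''R_\epsl(z)
\end{equation*}
is then meaningful because each commutator $[R_\epsl(z),iA_j^{sm}]^\circ$ is defined separately via the $C_0$-(semi)group $W^{(j)}_t$ generated by $\pm A_0^{sm}$ (a group) and $(-1)^jA_j^{sm}$ (semigroups) — see the modified hypotheses (M1*)--(M5*) in Section \ref{sec.ms-adapt} — and one simply sums over $j$. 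No single semigroup from $A_\phi$ is needed, no commutativity is required, and the Mourre estimate\refq{min.phiH1phi} is used directly as hypothesis (M2*). Your argument would need to be rebuilt on this $H_\epsl$-based framework rather than on a product regularizer.
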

 An immediate consequence of Corollary \ref{coro.3} is
\begin{coro}
\label{coro.4}
 The point spectrum $\sigma_p(H^D)$ of $H^D$ is reduced to $\{0\}$.
\end{coro}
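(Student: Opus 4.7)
The plan is to derive Corollary~\ref{coro.4} from the resolvent bound of Corollary~\ref{coro.3} via the classical observation that an eigenvalue on $\R^*$ would force the imaginary part of $(u,R(\lambda+i\mu)u)_{\calHD}$ to blow up like $1/\mu$, which is incompatible with the uniform control afforded by the LAP.

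Concretely, I would suppose for contradiction that some $\lambda_0\in\R^*$ is an eigenvalue of $H^D$, fix a compact interval $I\subset\R^*$ containing $\lambda_0$, and choose a cut-off $\phi\in\calC^\infty_c(\R\sauf\{0\})$ equal to $1$ on a neighborhood of $I$. Theorem~\ref{th.1}(v) then supplies the conjugate operator $A_\phi$, and Corollary~\ref{coro.3} yields a constant $C_I$ such that
$$
|(u,R(\lambda_0+i\mu)u)_{\calHD}|\le C_I\|u\|^2_{D(\bar A_\phi)}\quad\forall u\in D(\bar A_\phi),\;\mu\neq 0.
$$

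Next I would invoke the spectral theorem for the bounded self-adjoint operator $H^D$ on $\calHD$: for any $u\in\calHD$ and $\mu>0$,
$$
\Ima(u,R(\lambda_0+i\mu)u)_{\calHD}=\int_\R\frac{\mu}{(t-\lambda_0)^2+\mu^2}d\rho_u(t)\ge\frac{\|P_{\{\lambda_0\}}u\|^2}{\mu},
$$
where $\rho_u$ denotes the spectral measure associated with $u$ and $P_{\{\lambda_0\}}:=1_{\{\lambda_0\}}(H^D)$ is the eigenprojection at $\lambda_0$. Combining this with the previous display and letting $\mu\searrow 0$, I conclude that $P_{\{\lambda_0\}}u=0$ must hold for every $u\in D(\bar A_\phi)$.

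The only genuine work — and the main obstacle I anticipate — lies in verifying that $D(\bar A_\phi)$ meets the eigenspace $P_{\{\lambda_0\}}\calHD$ nontrivially whenever $P_{\{\lambda_0\}}\neq 0$; without this the argument above would be vacuous. I would handle it by a density argument: the subspace $\calD_0=\calC_c^\infty(\T^3\sauf\calX^*)$ lies in $D(\bar A_\phi)$ by the very construction of the closure $\bar A_\phi$ recorded in the remark following Theorem~\ref{th.1}, and it is dense in $\calHD$ because $\calX^*$ is a finite subset of $\T^3$. Continuity of $P_{\{\lambda_0\}}$ then forces $P_{\{\lambda_0\}}\calD_0$ to be dense in $P_{\{\lambda_0\}}\calHD$, so under the hypothesis $P_{\{\lambda_0\}}\neq 0$ there exists $v\in\calD_0$ with $P_{\{\lambda_0\}}v\neq 0$, contradicting the conclusion of the previous paragraph. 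This proves $\sigma_p(H^D)\cap\R^*=\emptyset$; the inclusion $0\in\sigma_p(H^D)$ is immediate, since $\ker H^D(x)$ contains $(y,0)$ and $(0,y)$ and is thus at least two-dimensional at every $x$ with $y=\sin x\neq 0$, producing an infinite-dimensional kernel for the fibered operator $H^D$.
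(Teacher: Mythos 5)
Your argument is correct and is precisely the standard ``LAP implies no embedded eigenvalues'' mechanism that the paper invokes tacitly when it states Corollary~\ref{coro.4} as an immediate consequence of Corollary~\ref{coro.3}; the only cosmetic flaw is that with the paper's convention $(u,v)_{\calHD}=\int \langle \hat D^{-1}u,v\rangle_{\C^6}$ (linear in the first slot, antilinear in the second) and $R(z)=(H^D-z)^{-1}$ one gets $\Ima(u,R(\lambda_0+i\mu)u)_{\calHD}=-\int \mu\,((t-\lambda_0)^2+\mu^2)^{-1}\,\rmd\rho_u(t)$ for $\mu>0$, but since the proof only uses the lower bound on $|(u,R(z)u)_{\calHD}|\ge|\Ima(u,R(z)u)_{\calHD}|$ the sign is immaterial. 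The density step is handled correctly: $\calD_0\subset D(\bar A_\phi)$ by the remark following Theorem~\ref{th.1}, $\calD_0$ is dense in $\calHD$ because $\calX^*$ is finite, and boundedness of $1_{\{\lambda_0\}}(H^D)$ then rules out a nontrivial eigenprojection.
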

 Before giving the proof of Theorem \ref{th.1}, we state the results for some natural class
 of perturbed Hamiltonians $H_V^D=H^D+V$, as done in \cite{GER.MOU}.
 We will simply recall some well known results in the Mourre theory (see \cite{MOU.ABS,PER.SPE})
 and refer the reader to the book \cite{AMR.GRO} for a complete exposition of the Mourre method.
 In particular a sharper version of Corollary \ref{coro.5} is given in \cite[Prop. 7.5.6]{AMR.GRO}.
\begin{coro}
\label{coro.5}
 Let $I\subset\R^*\sauf \calT_{sm}$ be a compact interval, and fix
 $\phi\in \calC^\infty_c(\R^*\sauf\calT_{sm})$ such that $\phi=1$ on a neighborhood of $I$.
 Let $V$ be a symmetric operator on $\calHD$ 
 so that
\begin{enumerate}
\item $VR(i)$ and $R(i)[V,iA_\phi]R(i)$ are compact.
\item $V\in C^{1,1}(A_\phi)$, i.e.,
$$ \int \| R(i)(e^{itA_\phi}[V,iA_\phi] e^{-itA_\phi}-[V,iA_\phi])R(i) \| \frac{\rmd t}t < \infty. $$
\end{enumerate}
 Then, setting $H^D_V:=H^D+V$, the following results hold:
\begin{enumerate}
\item
 There exists a constant $\delta>0$ and a compact operator $K$ so that,
%
$$ \phi(H^D_V)[H^D_V,iA_\phi]\phi(H^D_V) \ge \delta \phi^2(H^D_V)+K. $$
%
 Consequently point spectrum $\sigma_p(H^D_V)$ is of finite multiplicity in
 $\R^*\sauf\calT_{sm}$ and has no accumulation point in $\R^*\sauf\calT_{sm}$.
\item For each $\lambda\in I\sauf\sigma_p(H^D_V)$ there exist $\epsl>0$ and $c>0$ so that,
\begin{equation}
\label{ineq.Mourre3}
 1_{[\lambda-\epsl,\lambda+\epsl]}(H^D_V)[H^D_V,iA_\phi]1_{[\lambda-\epsl,\lambda+\epsl]}
  \ge c 1_{[\lambda-\epsl,\lambda+\epsl]}(H^D_V).
\end{equation}
\item The LAP for $H^D_V$ holds on $I\sauf\sigma_p(H^D_V)$: the limits
$$
 \lim_{\epsl\to \pm 0} (1+|A_\phi|)^{-s} R(\lambda \pm i\epsl) (1+|A_\phi|)^{-s}
$$
 exist and are bounded for all $s>1/2$.\\
 Consequently the singular continuous spectrum of $H^D_V$ is empty.
\item If the operator $(1+|A_\phi|)^{-s}V(1+|A_\phi|)^{-s}$ is bounded for some $s>1/2$,
 then the wave operators
%
$$ s-\lim_{t\to\pm\infty} e^{itH^D_V} e^{itH^D} 1_I(H^D)=: \Om_I^\pm $$
%
 exist and are asymptotically complete:
%
$$ 1_I^c(H^D_V)\calHD = \Om_I^\pm \calHD. $$
%
\end{enumerate}
\end{coro}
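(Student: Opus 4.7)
The plan is a direct application of the abstract Mourre machinery (as laid out in \cite{AMR.GRO,PER.SPE,MOU.ABS}) with the conjugate operator $A_\phi$ supplied by Theorem \ref{th.1}. Since $\supp\phi\subset\R^*\sauf\calT_{sm}$, I apply Theorem \ref{th.1} with $\calT'=\calT_{sm}\cup\{0\}\supset\calT_{sm}$, so Point (iv) makes $A_\phi$ essentially self-adjoint and I denote its unique self-adjoint extension by the same symbol. Point (i) provides the strict Mourre estimate \refq{ineq.Mourre1} for $H^D$, while Point (ii) gives $H^D\in\calC^\infty(A_\phi)\subset\calC^{1,1}(A_\phi)$.

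First I would transfer the Mourre estimate from $H^D$ to $H^D_V$. Since $VR(i)$ is compact, the resolvent identity gives $R_V(i)-R(i)$ compact (where $R_V(z):=(H^D_V-z)^{-1}$), and the smooth functional calculus (Helffer--Sj\"ostrand formula applied to $\phi$) yields $\phi(H^D_V)-\phi(H^D)$ compact. I then decompose
\begin{equation*}
\phi(H^D_V)[H^D_V,iA_\phi]\phi(H^D_V)=\phi(H^D_V)[H^D,iA_\phi]\phi(H^D_V)+\phi(H^D_V)[V,iA_\phi]\phi(H^D_V).
\end{equation*}
The first term matches $\phi(H^D)[H^D,iA_\phi]\phi(H^D)$ modulo compact operators, and the second is compact thanks to hypothesis (1), after factoring $\phi(H^D)=g(H^D)R(i)$ with $g(\lambda)=(\lambda-i)\phi(\lambda)$ bounded, since $R(i)[V,iA_\phi]R(i)$ is compact. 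Combined with \refq{ineq.Mourre1} and $\phi^2(H^D_V)=\phi^2(H^D)$ modulo compact, this proves the global Mourre estimate with compact remainder in Point (1); the standard Virial theorem argument then delivers the finite multiplicity and non-accumulation of $\sigma_p(H^D_V)$ in $\R^*\sauf\calT_{sm}$.

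Point (2) follows by localisation: at $\lambda\in I\sauf\sigma_p(H^D_V)$ I pick $\epsl>0$ so small that $1_{[\lambda-\epsl,\lambda+\epsl]}(H^D_V)\,K\,1_{[\lambda-\epsl,\lambda+\epsl]}(H^D_V)$ has norm below $\delta/2$ (possible since $K$ is compact and no spectral mass concentrates at a non-eigenvalue) and absorb it into the left-hand side. For Point (3), the sum rule gives $H^D_V=H^D+V\in\calC^{1,1}(A_\phi)$, so (2) together with $\calC^{1,1}$ regularity yields the weighted LAP by the standard Mourre theorem \cite[Thm 7.3.1 / 7.4.1]{AMR.GRO}, which in turn excludes singular continuous spectrum in $I$. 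For Point (4), under the additional boundedness of $(1+|A_\phi|)^{-s}V(1+|A_\phi|)^{-s}$, the LAP of Point (3) makes $V(1+|A_\phi|)^{-s}$ locally $H^D$- and $H^D_V$-smooth on $I$, so the existence and asymptotic completeness of the wave operators $\Om_I^\pm$ follow from the Kato smoothness criterion combined with Cook's method, as packaged in \cite[Thm 7.1.12]{AMR.GRO}.

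The main obstacle I anticipate is the careful bookkeeping around the commutator $[V,iA_\phi]$ when $A_\phi$ carries rational singularities (the case $\calT'\neq\calT$ of Theorem \ref{th.1}): one must interpret $[V,iA_\phi]$ as a quadratic form on a suitable core, but Theorem \ref{th.1}(iii) shows $\phi(H^D)A_\phi=\phi(H^D)\tilde\phi(H^D)A_\phi$ extends to a bounded operator on $\calHD$, so the compactness of $R(i)[V,iA_\phi]R(i)$ transfers cleanly to compactness of $\phi(H^D)[V,iA_\phi]\phi(H^D)$, which is what the argument of Point (1) really uses.
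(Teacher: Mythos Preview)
Your proposal is correct and follows exactly the approach the paper intends: the paper does not give a detailed proof of this corollary at all, but simply declares it to be a consequence of the abstract Mourre machinery (citing \cite{MOU.ABS,PER.SPE} and especially \cite[Prop.~7.5.6]{AMR.GRO}) once Theorem~\ref{th.1} supplies the essentially self-adjoint conjugate operator $A_\phi$ with the strict estimate~\refq{ineq.Mourre1} and $H^D\in\calC^\infty(A_\phi)$. Your sketch fills in the standard steps (compactness transfer via Helffer--Sj\"ostrand, virial, localisation, $\calC^{1,1}$ LAP, Kato smoothness) that the paper leaves implicit, and your closing remark about the rational singularities is unnecessary here since the choice $\calT'\supset\calT_{sm}$ already guarantees essential self-adjointness by Point~(iv).
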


\section{First Spectral Properties}
\label{sec.4}
\subsection{Spectrum of $h^D(y)$}
 We introduce the new parameters:
\begin{eqnarray}
\label{def.beta}
 \bfbeta &=& \bfepsl \times \bfmu = (\beta_1,\beta_2,\beta_3)\\
\nonumber
 \bfalpha &=& (\alpha_1,\alpha_2,\alpha_3),\quad
  \alpha_1 := (\epsl_2  \mu_3 + \epsl_3 \mu_2)/2 \quad \mbox{and, c.p.},\\
\label{def.gamma}
 \gamma_1 &=&  \epsl_2 \epsl_3 \mu_2\mu_3  \quad \mbox{and c.p. :}\\
\nonumber
 \gamma_2 &=&  \epsl_1 \epsl_3 \mu_1\mu_3, \\
\nonumber
 \gamma_3 &=&  \epsl_1 \epsl_2 \mu_1\mu_2.
\end{eqnarray}
 (The abreviation "c.p." means "circular permutation"  so we have the other values by circular permutation,
 ex., $\beta_1 = \epsl_2 \mu_3 - \epsl_3 \mu_2)$.

 Let us describe the spectrum of $h^D(y)$.
\begin{lemma}
\label{lem.valp}
 We have
%
$$ \det(h^D(y)-k) = \det(\bfepsl M(y)\bfmu M(y)+k^2) $$
 and the factorization
%

$$ \det(h^D(y)-k)  = k^2(\tau^+(z)-k^2)(\tau^-(z)-k^2), $$
 with
\begin{equation}
\label{v.taupm}
 \tau^\pm =  \Psi_0\pm\sqrt{K_0} ,
\end{equation}
 where
\begin{eqnarray}
\label{def.K0}
   K_0(z) &=&    \frac14( \beta_1^2 z_1^2  - 2\beta_1\beta_2 z_1z_2) + c.p. ,\\
\label{def.Psi0}
\quad \Psi_0(z) &=& \bfalpha\cdot z \: := \,  \alpha_1 z_1 + \alpha_2 z_2 + \alpha_3 z_3 .
\end{eqnarray}
\end{lemma}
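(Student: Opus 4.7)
The plan is to reduce the characteristic polynomial of the $6\times 6$ matrix $h^D(y)$ to that of the $3\times 3$ matrix $B:=\bfepsl M(y)\bfmu M(y)$, to isolate the factor $k^2$ by observing that $By=0$, and then to identify the remaining quadratic in $k^2$ with $(\tau^+-k^2)(\tau^--k^2)$ by computing the symmetric functions $\mathrm{tr}(B)$ and $e_2(B)$.

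The first identity follows from the Schur complement. Writing
$$h^D(y)-kI_6 = \begin{pmatrix} -kI_3 & \bfepsl M(y) \\ -\bfmu M(y) & -kI_3 \end{pmatrix},$$
the invertibility of the lower-right block for $k\neq 0$ yields
$$\det\bigl(h^D(y)-k\bigr) = (-k)^3\det\!\bigl(-kI_3 - k^{-1}\bfepsl M(y)\bfmu M(y)\bigr) = \det\bigl(\bfepsl M(y)\bfmu M(y)+k^2 I_3\bigr),$$
and since both sides are polynomials in $k$, the identity extends to $k=0$.

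Since $M(y)v=y\times v$, one has $M(y)y=0$, whence $By=0$ and $\det B=0$. The characteristic polynomial of $B$ therefore takes the form
$$\det\bigl(B+k^2 I_3\bigr) = k^2\bigl(k^4 + \mathrm{tr}(B)\,k^2 + e_2(B)\bigr),$$
where $e_2(B)$ denotes the sum of the principal $2\times 2$ minors. Matching this with the target $k^2(\tau^+-k^2)(\tau^--k^2)$ reduces the lemma to the two algebraic identities
$$\mathrm{tr}(B) = -2\Psi_0(z), \qquad e_2(B) = \Psi_0(z)^2 - K_0(z),$$
after which the quadratic $T^2 - 2\Psi_0 T + (\Psi_0^2-K_0)$ in $T=k^2$ factors as $(T-\tau^+)(T-\tau^-)$ with $\tau^\pm = \Psi_0\pm\sqrt{K_0}$.

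The trace identity is an immediate direct computation: expanding $M(y)\bfmu M(y)$ entry by entry yields $(M\bfmu M)_{ii}=-(\mu_{i+1}z_{i+2}+\mu_{i+2}z_{i+1})$ for $i\in\{1,2,3\}$ (indices modulo $3$), so $\mathrm{tr}(B) = -\sum_i \epsl_i(\mu_{i+1}z_{i+2}+\mu_{i+2}z_{i+1})$ regroups as $-2(\alpha_1 z_1+\alpha_2 z_2+\alpha_3 z_3) = -2\Psi_0$. The identity for $e_2(B)$ is the main obstacle; I would establish it via $e_2(B) = \tfrac12\bigl((\mathrm{tr}\,B)^2 - \mathrm{tr}(B^2)\bigr)$ together with an explicit componentwise expansion of $\mathrm{tr}(B^2)$, regrouping the resulting quartic monomials in $y$ (equivalently, quadratic in $z$) according to whether they arise from two diagonal entries of $B$ or from a pair of symmetric off-diagonal products $B_{ij}B_{ji}$. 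The bookkeeping is heavy because $B$ is not self-adjoint in the standard inner product, but the explicit form of $K_0$ --- involving precisely the combinations $\beta_i^2 z_i^2 - 2\beta_i\beta_{i+1}z_iz_{i+1}$ --- provides the target pattern for the regrouping and should make the match with $\Psi_0^2-K_0$ essentially mechanical once the expansion is written out.
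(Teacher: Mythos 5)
Your approach is structurally the same as the paper's: both reduce the lemma to the characteristic polynomial of $B := \bfepsl M(y)\bfmu M(y)$ and match coefficients with $-2\Psi_0$ and $\Psi_0^2-K_0$. Your observation $M(y)y=0$, hence $By=0$ and $\det B=0$, is a clean way to extract the factor $k^2$ (the paper obtains the same thing by simply seeing the constant term vanish in the explicit expansion of $\det(\bfepsl M\bfmu M - t)$), and the trace identity is correctly verified.

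The genuine gap is that the one non-routine step of the lemma --- the identity $e_2(B)=\Psi_0^2-K_0$, equivalently $\Psi_0^2-\Phi_0=K_0$ where $\Phi_0:=e_2(B)$ --- is exactly the step you defer to ``heavy but mechanical bookkeeping'' and never execute. The paper carries it out by first computing $\Phi_0$ explicitly from the principal $2\times2$ minors of $B$ and then establishing $\Psi_0^2-\Phi_0=K_0$ via three auxiliary relations,
$$\alpha_1^2-\gamma_1=\tfrac14\beta_1^2,\qquad \epsl_1\mu_1\alpha_1-\alpha_2\alpha_3=\tfrac14\beta_2\beta_3,\qquad \alpha_3\beta_2+\alpha_2\beta_3=-\epsl_1\mu_1\beta_1,$$
and their cyclic permutations. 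These regroupings are what make the comparison of a dozen quartic monomials in $y$ with the definition of $K_0$ tractable; without them, neither the paper's direct minor expansion nor your Newton-identity route $e_2(B)=\tfrac12\bigl((\mathrm{tr}\,B)^2-\mathrm{tr}(B^2)\bigr)$ lands on the stated form of $K_0$ by inspection, and a completed proof has to supply this algebra.
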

 Proof in Appendix A.
 Since the characteristic polynomial  $\det(H^D(x)-\lambda)$ depends on $x$
 via the new variable $z =  (z_1,z_2,z_3) = \sin^2 x \in [0,1]^3$ we set
$$
  p(z;\lambda) =  \det(H^D(x)-\lambda) = \det(\bfepsl M(y)\bfmu M(y)+\lambda^2).
$$
\begin{remark}
 If $z\neq 0$ then
$$
   \tau^+(z) = \Psi_0(z)+\sqrt{K_0(z)} \ge  \tau^-(z) = \Psi_0(z)-\sqrt{K_0(z)} >0.
$$
 Moreover there exists $C>0$ such that
%
$$ \tau^-(z) \ge C|z| , \quad z\in [0,1]^3. $$
%
\end{remark}
%

\medskip
 Since $\bfbeta\cdot \bfepsl=0$ and $\epsl_i>0$ for all $i\in [[1,3]]$ then there exists
 $j\in [[1,3]]$ such that $\beta_j \beta_i\le 0$ and $\beta_k \beta_i\ge 0$ for $i,k\neq j$.
 If two of the $\beta_j$'s vanish then $\bfbeta$ vanishes.
 Moreover $\bfbeta$ is replaced by $-\bfbeta$ if $\bfepsl$ and $\bfmu$ are exchanged,
 which involves the same analysis.
 Hence if $\bfbeta\neq0$ then we can assume without any restriction:
\begin{center}
 {\bf (A0)}: $\beta_1\ge \beta_2 > 0 >  \beta_3$ or $\beta_1> \beta_2 = 0 >  \beta_3$.
\end{center}
 The functions $\Psi_0$ and $K_0$ are homogeneous polynomials.
 The relation $K_0\equiv 0$ is equivalent to $\bfbeta=0$ which is the special case
 where $\epsl$ and $\mu$ are proportional.
 If one of the $\beta_i$'s vanishes, so, under Asumption (A0), $\beta_2=0$, then
 $\sqrt{K_0(z)}$ is polynomial.
 Thus, the functions $\R^3\ni z \mapsto \tau^\pm(z)$ are homogeneous analytical complex functions
 with branch at $z=0$ and at points $z$ for which $K_0(z)=0$ if $\beta_2\neq 0$ for all $i$,
 and with branch point at $z=0$ only if $\beta_2\neq 0$.

 If $z=0$ then $h^D(y)=0_{6\times 6}$ and all the eigenvalues vanish.
 Let us consider the case $z\neq 0$.
\begin{theor} (Spectrum of $h^D(y)$.)
\label{th.spectreH}
 Let  $z\in [0,+\infty)^3\sauf \{0_{\R^3}\}$. Then $0$ is a double eigenvalue with eigenvectors
 $(y_1,y_2,y_3,0,0,0)\equiv y\otimes 0_{\C^3}$ and
 $(0,0,0,y_1,y_2,y_3)\equiv 0_{\C^3}\otimes y$.

 Assume $\bfbeta=0$. Then $K_0\equiv 0$ and all the eigenvalues have multiplicity two.
 Moreover, the nonzero eigenvalues of $h^D(y)$ are
$$
 \pm\sqrt{\tau^+(z)}=\pm\sqrt{\tau^-(z)} = \pm \sqrt{\epsl_2\mu_3 z_1
  + \epsl_3\mu_1 z_2+\epsl_1\mu_2 z_3}.
$$
 Assume $\bfbeta\neq0$ (so (A0) holds).
 Then the nonzero eigenvalues of $h^D(y)$ are
\begin{itemize}
\item $\pm\sqrt{\tau^+(z)}$, simple iff $K_0(z)\neq 0$,
\item $\pm\sqrt{\tau^-(z)}$, simple iff $K_0(z)\neq 0$.
\item $\pm\sqrt{\tau^+(z)}=\pm\sqrt{\tau^-(z)}$, double iff $K_0(z)= 0$.
\end{itemize}
 Assume $\bfbeta\neq0$ (so (A0) holds) with $\beta_2=0$. 
 Then, $\tau^+$ and $\tau^-$ are linear according to $z$:
\begin{eqnarray}
\label{val.tau+-anal}
 \tau^+(z) &=&  \epsl_2\mu_3 z_1+ \epsl_3\mu_1 z_2 + \epsl_2\mu_1 z_3,\\
\label{val.tau--anal}
 \tau^-(z) &=& \epsl_3\mu_2 z_1+ \epsl_3\mu_1 z_2 + \epsl_1\mu_2 z_3.
\end{eqnarray}
 (Hence we observe that:\\
 If $(z_1,z_3)\neq 0_{\R^2}$, then the nonzero eigenvalues of $h^D(y)$ are
 $\pm\sqrt{\tau^+(z)}$, simple.
 If $(z_1,z_3)= 0_{\R^2}$ and $z_2\neq 0$, then the nonzero eigenvalues of $h^D(y)$ are
$$
 \pm\sqrt{\tau^+(z)}=\pm\sqrt{\tau^-(z)}=\pm \sqrt{\alpha_2} |y_2| =\pm \sqrt{\epsl_3\mu_1} |y_2| ,
$$
 double.)
\end{theor}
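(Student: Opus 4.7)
\medskip
\noindent\textbf{Proof plan for Theorem \ref{th.spectreH}.}

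The plan is to read everything off from the factorization established in Lemma \ref{lem.valp} together with the self-adjointness of $h^D(y)$, supplemented by one explicit computation to exhibit the eigenvectors at $0$ and one explicit computation to handle the degenerate case $\beta_2=0$.

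First, since $h^D(y)$ is self-adjoint with respect to the hermitian product $\langle \cdot,\cdot\rangle_{\C^6,D}$, every algebraic multiplicity of an eigenvalue equals its geometric multiplicity. By Lemma \ref{lem.valp} the characteristic polynomial is
\[
\det(h^D(y)-k)=k^2\bigl(\tau^+(z)-k^2\bigr)\bigl(\tau^-(z)-k^2\bigr),
\]
whose roots are $0$ (with algebraic multiplicity at least $2$), together with $\pm\sqrt{\tau^+(z)}$ and $\pm\sqrt{\tau^-(z)}$. Using the Remark just below Lemma \ref{lem.valp}, when $z\neq 0$ we have $\tau^-(z)>0$, so all four roots $\pm\sqrt{\tau^\pm(z)}$ are nonzero and the multiplicity of $0$ is exactly $2$. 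The reading of multiplicities of $\pm\sqrt{\tau^\pm(z)}$ then reduces to noting when $\tau^+=\tau^-$, which by \eqref{v.taupm} is exactly when $K_0(z)=0$.

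Next I would exhibit two eigenvectors at $0$. The key point is that $M(y)$ in \eqref{def.matM} is the cross-product matrix, so $M(y)y=y\times y=0$. From the block form of $H_0(x)$ this immediately gives $H_0(x)(y\otimes 0_{\C^3})=H_0(x)(0_{\C^3}\otimes y)=0$, hence $H^D(x)$ annihilates both vectors (since $H^D=\hat D H_0$). Because $z\neq 0$ forces $y\neq 0$, these two vectors are $\R$-linearly independent, matching the double eigenvalue found above.

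For the case $\bfbeta=0$, one observes directly from \eqref{def.K0} that $K_0\equiv 0$, and the constraints $\epsl_i\mu_j=\epsl_j\mu_i$ turn $\Psi_0$ into $\epsl_2\mu_3 z_1+\epsl_3\mu_1 z_2+\epsl_1\mu_2 z_3$; hence $\tau^+=\tau^-$ and the stated formula follows. For the case $\bfbeta\neq 0$, $\beta_2=0$, the main computation is substituting $\beta_2=0$ in \eqref{def.K0} to obtain the perfect square
\[
K_0(z)=\tfrac14(\beta_1 z_1-\beta_3 z_3)^2,
\]
whose square root is $\tfrac12(\beta_1 z_1-\beta_3 z_3)$ (non-negative by (A0)). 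Then inserting the definitions of $\bfalpha$, $\bfbeta$ and using $\epsl_3\mu_1=\epsl_1\mu_3$ (equivalent to $\beta_2=0$) in $\tau^\pm=\Psi_0\pm\sqrt{K_0}$ yields the announced linear expressions \eqref{val.tau+-anal}--\eqref{val.tau--anal}; the parenthetical observation is then just a case distinction depending on which $z_i$ vanish. The main obstacle here is purely bookkeeping: keeping straight the circular permutations in $\alpha_i$, $\beta_i$, $\gamma_i$ so that the correct cancellations occur in $\alpha_1\pm\beta_1/2$ and $\alpha_3\mp\beta_3/2$; there is no conceptual difficulty beyond Lemma \ref{lem.valp}.
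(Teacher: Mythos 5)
Your proof is correct and follows the route the paper implicitly takes (the paper's ``proof'' is the single remark that the theorem follows from \eqref{def.K0} and \eqref{def.Psi0}, which of course presupposes Lemma \ref{lem.valp} as you use it). All the details you supply check out: $M(y)y=y\times y=0$ gives the two kernel vectors; the factorization together with $\tau^-(z)>0$ for $z\neq 0$ pins the multiplicity of $0$ at exactly two; the multiplicity dichotomy for $\pm\sqrt{\tau^\pm}$ reduces to $K_0(z)=0$ or not; with $\beta_2=0$ one gets $K_0=\tfrac14(\beta_1z_1-\beta_3z_3)^2$ with $\sqrt{K_0}=\tfrac12(\beta_1z_1-\beta_3z_3)\ge 0$ under (A0), and the cancellations $\alpha_1\pm\tfrac12\beta_1$, $\alpha_3\mp\tfrac12\beta_3$ give exactly \eqref{val.tau+-anal}--\eqref{val.tau--anal}. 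You go well beyond what the paper writes down, but there is no genuine difference of method.
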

 The Proof of Theorem \ref{th.spectreH} follows from\refq{def.K0} and\refq{def.Psi0}.
\begin{lemma}
\label{lem.K0=0}
 Let us assume $\bfbeta\neq0$ (so (A0) holds). We have
$$ K_0^{-1}(\{0\}):= \{z\in [0,1]^3 \, ; \: K_0(z)=0\} = \{t(\beta_2,\beta_1,0)\, ; \: 0 \le t\le \frac1{\beta_1}\} .$$
\end{lemma}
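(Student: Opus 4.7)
The plan is to reduce the equation $K_0(z)=0$ to a simple sum-of-squares identity and then exploit the sign hypothesis (A0). First I would expand the circular-permutation sum in (\ref{def.K0}) to obtain
$$4K_0(z) = \beta_1^2 z_1^2 + \beta_2^2 z_2^2 + \beta_3^2 z_3^2 - 2\beta_1\beta_2 z_1 z_2 - 2\beta_2\beta_3 z_2 z_3 - 2\beta_3\beta_1 z_3 z_1.$$
Setting $a = \beta_1 z_1$, $b = \beta_2 z_2$, $c = \beta_3 z_3$ and using the elementary identity $a^2+b^2+c^2-2ab-2bc-2ca = (a-b-c)^2 - 4bc$, this rewrites as
$$4K_0(z) = (\beta_1 z_1 - \beta_2 z_2 - \beta_3 z_3)^2 - 4\beta_2\beta_3 z_2 z_3.$$

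Second, I would use (A0) to determine signs: because $\beta_2\ge 0$, $\beta_3<0$, and $z_2, z_3\ge 0$ on $[0,1]^3$, the term $-4\beta_2\beta_3 z_2 z_3$ is nonnegative, while the squared term is obviously nonnegative. Thus $K_0(z)=0$ forces both to vanish simultaneously, yielding the coupled equations $\beta_1 z_1 = \beta_2 z_2 + \beta_3 z_3$ and $\beta_2\beta_3 z_2 z_3 = 0$.

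Third, I would carry out a short case analysis. If $\beta_2>0$, then $\beta_2\beta_3<0$, so the second equation forces $z_2=0$ or $z_3=0$. The subcase $z_2=0$ gives $\beta_1 z_1=\beta_3 z_3$ where the left side is $\ge 0$ and the right side is $\le 0$; hence both vanish and $z=0$, included in the claimed curve at $t=0$. The subcase $z_3=0$ yields $\beta_1 z_1=\beta_2 z_2$, parametrised by $t:=z_2/\beta_1\in[0,1/\beta_1]$, so $z=t(\beta_2,\beta_1,0)$; the condition $z_1=t\beta_2\le 1$ follows from $\beta_2\le\beta_1$ and $t\le 1/\beta_1$. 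If $\beta_2=0$, the second equation is automatic and the first reduces to $\beta_1 z_1=\beta_3 z_3$, again forcing $z_1=z_3=0$, so $z=(0,z_2,0)=t(0,\beta_1,0)$ with $t=z_2/\beta_1\in[0,1/\beta_1]$, in agreement with the formula since $\beta_2=0$.

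The reverse inclusion is an immediate substitution: for $z=t(\beta_2,\beta_1,0)$ with $t\in[0,1/\beta_1]$ one has $\beta_1 z_1-\beta_2 z_2-\beta_3 z_3=0$ and $z_3=0$, so both terms in the factored expression vanish. I do not expect any genuine obstacle here; the only delicate point is the bookkeeping ensuring $z\in[0,1]^3$, which is precisely where the ordering $\beta_1\ge\beta_2$ assumed in (A0) is used.
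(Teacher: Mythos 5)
Your proof is correct, and the factorization you derive, $4K_0(z)=(\beta_1 z_1-\beta_2 z_2-\beta_3 z_3)^2-4\beta_2\beta_3 z_2 z_3$, is exactly the identity the paper records as\refq{val1.K0} in Appendix A; the sign analysis under (A0) together with the case split on $\beta_2$ is the intended argument, which the paper itself leaves to the reader. Your verification that $t\beta_2\le 1$ (where $\beta_2\le\beta_1$ from (A0) is used) and the direct check of the reverse inclusion complete it cleanly.
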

 The Proof is let to the reader.

 We set
$$ \textcolor{red}{\lambda_\pm = \max\{\sqrt{\tau^\pm(z)} }\,; \; z \in [0,1]^3\}\in (0,+\infty) . $$
 The following result is a consequence of Theorem \ref{th.spectreH} and of relation\refq{val1.sigma}:
\begin{prop}
\label{p.HD-spectre}
 1) The operator $H^D$ admits $0$ as eigenvalue of infinite order.\\
 2) The spectrum of $H^D$ is
$$ \sigma(H^D) = [-\textcolor{red}{\lambda_+},\textcolor{red}{\lambda_+}]. $$
\end{prop}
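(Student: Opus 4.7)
My plan is to split the proof into the two asserted items, both of which follow quickly from Theorem \ref{th.spectreH} together with the fibered structure of $H^D$ and the formula\refq{val1.sigma}.

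For item (1), I would exhibit an explicit infinite-dimensional subspace contained in $\ker H^D$. By Theorem \ref{th.spectreH}, for every $x\in\T^3$ with $y(x)=\sin x \neq 0$ the two vectors $y(x)\otimes 0_{\C^3}$ and $0_{\C^3}\otimes y(x)$ lie in $\ker H^D(x)$, and when $y(x)=0$ one has $h^D(y(x))=0$ on the whole of $\C^6$. Hence for any two scalar functions $f_1,f_2\in L^2(\T^3)$, the vector field
$$ x\mapsto u(x):=(f_1(x)y(x),\, f_2(x)y(x)) $$
belongs to $\calHD$ and satisfies $H^D(x)u(x)=0$ for every $x$, so $u\in \ker H^D$. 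The linear map $(f_1,f_2)\mapsto u$ has infinite-dimensional image since $y$ is nonzero on a set of positive measure; this proves that $0$ is an eigenvalue of infinite multiplicity.

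For item (2), I would combine\refq{val1.sigma} with the description of $\sigma(h^D(y))$ given by Theorem \ref{th.spectreH}. In every case (whether $\bfbeta=0$, $\bfbeta\neq 0$ with $\beta_2\neq 0$, or $\bfbeta\neq 0$ with $\beta_2=0$) the spectrum of $H^D(x)=h^D(y)$ is contained in
$$ \{0,\;\pm\sqrt{\tau^+(z)},\;\pm\sqrt{\tau^-(z)}\}, \qquad z=\sin^2 x\in [0,1]^3, $$
and each of these values is indeed attained. Consequently,
$$ \bigcup_{x\in \T^3}\sigma(H^D(x))=\{0\}\cup\{\pm\sqrt{\tau^+(z)},\pm\sqrt{\tau^-(z)}\,;\; z\in [0,1]^3\}. $$
Since $\tau^\pm\ge 0$ on $[0,1]^3$ (they are eigenvalues of a nonnegative matrix), the square roots $\sqrt{\tau^\pm}:[0,1]^3\to [0,\infty)$ are continuous. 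The set $[0,1]^3$ is compact and connected and contains $0$, where $\tau^\pm$ vanish; therefore $\sqrt{\tau^+}([0,1]^3)$ is a connected compact subset of $[0,\infty)$ containing $0$ and attaining its maximum $\lambda_+$, i.e.\ it equals $[0,\lambda_+]$. The analogous image for $\sqrt{\tau^-}$ is $[0,\lambda_-]\subset [0,\lambda_+]$ since $\tau^-\le\tau^+$ pointwise (hence $\lambda_-\le\lambda_+$). Taking the union with the opposite signs gives exactly $[-\lambda_+,\lambda_+]$, a closed set, so the closure in\refq{val1.sigma} is superfluous and $\sigma(H^D)=[-\lambda_+,\lambda_+]$.

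I do not anticipate any real obstacle: item (1) is a direct construction and item (2) is a standard application of the intermediate value theorem on a connected space. The one small point that deserves attention is the continuity of $\sqrt{\tau^\pm}$ at $z=0$, where $\tau^\pm$ may have a branch point; but since $\tau^\pm(z)\to 0$ as $z\to 0$ and $\tau^\pm\ge 0$, the functions $\sqrt{\tau^\pm}$ are continuous on the whole of $[0,1]^3$, which is all that the argument requires.
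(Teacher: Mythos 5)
Your argument for item (2) is essentially the paper's own proof: the paper also observes that $\tau^\pm$ are continuous, vanish at $z=0$, satisfy $\tau^+\ge\tau^-\ge 0$, and then invokes the connectedness of $[0,1]^3$ to conclude $\bigcup_{y}\sigma(h^D(y))=[-\lambda_+,\lambda_+]$, so the closure in\refq{val1.sigma} adds nothing. You merely spell out the intermediate value argument a bit more fully (in particular why $\sqrt{\tau^-}([0,1]^3)\subset\sqrt{\tau^+}([0,1]^3)$ makes the $\tau^-$ contribution redundant), which is fine.

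Where you go beyond the paper is item (1): the paper's proof in Appendix A only addresses item (2) and treats item (1) as obvious, whereas you supply the explicit infinite-dimensional family $u=(f_1 y,\,f_2 y)$ with $f_1,f_2\in L^2(\T^3)$, justified by Theorem \ref{th.spectreH}. That construction is correct and is a useful addition. One small imprecision: the parenthetical ``they are eigenvalues of a nonnegative matrix'' is not quite right as stated. The values $\tau^\pm(z)$ are not eigenvalues of $\bfepsl M(y)\bfmu M(y)$; rather $-\tau^\pm(z)$ are, and that matrix is similar to $-(\bfepsl^{1/2}M\bfmu^{1/2})(\bfepsl^{1/2}M\bfmu^{1/2})^T$, hence negative semidefinite. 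The cleanest justification for $\tau^\pm\ge 0$ is simply that $h^D(y)$ is self-adjoint on $(\C^6,\langle\cdot,\cdot\rangle_{\C^6,D})$, so its eigenvalues $\pm\sqrt{\tau^\pm(z)}$ must be real; this is also recorded in the Remark following Lemma \ref{lem.valp}. The slip does not affect the validity of your argument, but the parenthetical should be corrected or replaced by a citation to that Remark.
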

 Proof in Appendix A. 

\subsection{Stratification and thresholds  }
\label{sec.seuils}
 Following \cite{GER.MOU} the set of energy-momentum is
%
$$ \Sigma = \{(\lambda,x)\,| \; \lambda\in \sigma(H^D(x))\} \subset \sigma(H^D) \times \T^3. $$
%
 We have $(\lambda,x)\in\Sigma\egal p(z;\lambda)=0$. 
 We consider the canonical projections: 
\begin{eqnarray*}
 P_M &:& \R\times\T^3 \ni (\lambda,x) \mapsto x\in \T^3,\\
 P_\R &:& \R\times\T^3 \ni (\lambda,x) \mapsto \lambda\in \R.
\end{eqnarray*}
 It is clear that $P_\R|_{\Sigma}$ is a proper map.
 The spectrum $\sigma(H^D(x))$ of $H^D(x)$ is discrete and depends continuously on $x$.
 The operators $H^D(x)$ are the fibers and the space $\T^3$ is the momentum space.\\
 The set of energy-momentum $\Sigma$ admits the partition
$$ \Sigma = \cup_{i=1}^6 \Sigma_i, $$
 where $\Sigma_i$ is the semi-analytical set of elements $(\lambda,x)$ for which
 $\lambda$ is an eigenvalue of multiplicity $i$ of $H^D(x)$.
 We set
$$ \calX_j =  P_M(\Sigma_j),\quad j\ge 1. $$
 We see that 
 $\Sigma_j=\emptyset$ for $j=3,4,5$, $\Sigma_6= \{0\}\times \calX_0$ so
 $\calX_6=\calX_0$, $\calX_j=\emptyset$ for $j=3,4,5$.
 Moroever,
\begin{eqnarray*}
 \Sigma_1 &=& \Sigma_1^+ \cup \Sigma_1^-, \\
 \Sigma_1^\pm &:=& \{(\lambda,x);\: 0\neq \lambda^2=\tau^\pm(z)\neq \tau^\mp(z)\},\\
 \Sigma_2 &=& \{(\lambda,x); \: 0\neq \lambda^2=\tau^+(z)=\tau^-(z)\} .
\end{eqnarray*}
 If $\bfbeta=0$ then $\Sigma_1=\emptyset$ and $(\lambda,x)\in\Sigma_2$ iff
 $z\neq 0$ and $\lambda^2=\Psi_0(z)$.\\
 If $\bfbeta\neq0$ holds then $(\lambda,x)\in\Sigma_1$ iff $K_0(z)\neq 0$ and
 $\lambda^2\in \{\tau^+(z),\tau^-(z)\}$.
 
 Let us define the set of thresholds, $\calT$. For a more general definition of $\calT$
 (which may defer from ours), see \cite{GER.MOU}.
 We set
\begin{eqnarray*}
 \Sigma_1^* &:=&  \Sigma_1^{*+} \cup  \Sigma_1^{*-},\\
 \Sigma_1^{*\pm} &:=& \{(\lambda,x)\in \Sigma_1; \: \lambda^2=\tau^\pm(z), \nabla_x\tau^\pm(z)=0\},\\
 \Sigma_2^* &:=& \{(\lambda,x)\in \Sigma_2; \: \lambda^2=\Psi_0(z), 
  \mbox{ and $\nabla_x\Psi_0(z)$ is normal to $\calX_2$ at $x$}\},
\end{eqnarray*}
%
 then, 
\begin{eqnarray*}
 \calT_j &:=& P_\R(\Sigma_j^*), \\
 \calX_j^* &:=& P_M(\Sigma_j^*),\\
 \calZ^*_j &:=& \sin^2(\calX_j^*), \quad j=1,2.
\end{eqnarray*}
 Observing that $P_\R(\Sigma_6)=\{0\}$, we set the set of thresholds, $\calT$, as
$$ \calT :=  \{0\} \cup \calT_1 \cup  \calT_2. $$
 Let us describe $\calT$. Setting
$$ \calX^* = \calX_1^* \cup \calX_2^* , $$
 we see that $\lambda$ is a threshold iff there exists $z^*\in  \sin^2(\calX^*)$ such that $p(z^*;\lambda)=0$.
 Setting
\begin{eqnarray*}
 \calT_1^\pm &:=& P_\R(\Sigma_1^{*\pm}),\\
 \calX_1^{*\pm} &:=& P_M(\Sigma_1^{*\pm}),\\
 \calZ_1^{*\pm} &:=& \sin^2(\calX_1^{*\pm}),
\end{eqnarray*}
 we have
\begin{eqnarray*}
 \calT_1 &=& \calT_1^+ \cup \calT_1^- ,\\
 \calX_1^* &=& \calX^{*+}_1\cup \calX^{*-}_1, \\
 \sin^2(\calX^*_1) &=& \calZ_1^{*+}\cup \calZ_1^{*-},
\end{eqnarray*}
 so
\begin{equation}
\label{caract.T}
 \calT =  \calT_1^+ \cup  \calT_1^- \cup  \calT_2 \cup \{0\} .
\end{equation}
 Obviously $\calT$ is symmetric to 0 so we shall analyse $H^D$ at the positive thresholds only.
\begin{lemma}
\label{lem.dtau=0}
\begin{itemize}
 \item [A)] Assume $\bfbeta=0$.
 Then we have $\partial_{z_i} \tau^\pm(z) = \partial_{z_i} \Psi_0(z) > 0$ for all $i$.
 \item [B)] Assume $\bfbeta\neq0$ (so (A0) holds.) We set
\begin{eqnarray}
\label{def.nu}
 \nu &:=&  \frac{2\alpha_3\sqrt{\beta_1\beta_2} - \sqrt{\gamma_3}(\beta_1 + \beta_2)}
 {|\beta_3| \sqrt{\gamma_3}}.
\end{eqnarray}
 Let $z\in [0,1]^3$ such that $K_0(z)\neq 0$.
 \begin{itemize}
 \item[1)] We have $\partial_{z_i} \tau^+(z)> 0$ for $i=1,2,3$, and $\partial_{z_i} \tau^-(z)> 0$
  for $i=1,2$.
 \item[2)] 
  \begin{itemize}
   \item [a)] Assume $\beta_2=0$ (so $\nu<0$). Then $\partial_{z_3} \tau^-(z)> 0$.
   \item [b)] Assume $\beta_2>0$.
   \begin{itemize} 
    \item[(i)] If $z_1=0$ or $z_2=0$ then $\partial_{z_3} \tau^-(z)> 0$.\\
    \item[(ii)] The derivative $\partial_{z_3} \tau^-(1,1,z_3)$ vanishes iff $z_3=\nu\in [0,1]$,
  and if $z_3\neq \nu$ then $\partial_{z_3} \tau^-(1,1,z_3)$ has the same sign than $z_3-\nu$ .
   \end{itemize}
 \end{itemize}
\end{itemize}
\end{itemize}
\end{lemma}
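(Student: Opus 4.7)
The plan is to reduce every assertion to explicit algebra on $\tau^\pm=\Psi_0\pm\sqrt{K_0}$. Whenever $K_0(z)\neq 0$ I write
\begin{equation*}
\partial_{z_i}\tau^\pm(z)\,=\,\alpha_i\pm\frac{\partial_{z_i}K_0(z)}{2\sqrt{K_0(z)}},
\end{equation*}
and from\refq{def.K0} a direct computation gives $\partial_{z_i}K_0(z)=\frac{\beta_i}{2}(\beta_i z_i-\beta_j z_j-\beta_k z_k)$ for $(i,j,k)$ a cyclic permutation of $(1,2,3)$. The engine of the proof is the algebraic identity
\begin{equation*}
4\alpha_i^2 K_0(z)-(\partial_{z_i}K_0(z))^2\,=\,\gamma_i(\beta_i z_i-\beta_j z_j-\beta_k z_k)^2-4\alpha_i^2\beta_j\beta_k z_j z_k,
\end{equation*}
which I would verify by expanding both sides and using $\alpha_i^2-\beta_i^2/4=\gamma_i$. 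Comparing $|\partial_{z_i}K_0|$ with $2\alpha_i\sqrt{K_0}$ then reduces to reading off the sign of the right-hand side.

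Part A is immediate: $\bfbeta=0$ forces $K_0\equiv 0$, so $\tau^\pm=\Psi_0$ and $\partial_{z_i}\tau^\pm=\alpha_i>0$. For Part B.1 under (A0), if $i\in\{1,2\}$ then the complementary product $\beta_j\beta_k$ is non-positive, so both terms on the right of the identity are $\ge 0$. Equality would force $\beta_i z_i-\beta_j z_j-\beta_k z_k=0$ together with $z_j z_k=0$, and a short case analysis on which of $z_j,z_k$ vanishes (using $\beta_1>0$, $\beta_3<0$) together with Lemma \ref{lem.K0=0} forces $K_0(z)=0$, contradicting the hypothesis. Hence $4\alpha_i^2 K_0>(\partial_{z_i}K_0)^2$ and both $\partial_{z_i}\tau^\pm>0$. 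For $i=3$ and $\tau^+$, I note that $\partial_{z_3}K_0=\frac{\beta_3}{2}(\beta_3 z_3-\beta_1 z_1-\beta_2 z_2)\ge 0$ under (A0), because both $\beta_3^2 z_3$ and $-\beta_3(\beta_1 z_1+\beta_2 z_2)=|\beta_3|(\beta_1 z_1+\beta_2 z_2)$ are non-negative; thus $\partial_{z_3}\tau^+=\alpha_3+\partial_{z_3}K_0/(2\sqrt{K_0})>0$.

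Part B.2 handles the remaining sign of $\partial_{z_3}\tau^-$. In case (a), formula\refq{val.tau--anal} gives $\tau^-$ linearly with $\partial_{z_3}\tau^-=\epsl_1\mu_2>0$. In case (b)(i), setting $z_1=0$ or $z_2=0$ kills the second term in the identity, and the remaining square is strictly positive whenever $K_0(z)\neq 0$ (same kind of contradiction as in Part B.1), so $\partial_{z_3}\tau^->0$. In case (b)(ii), I restrict the identity to $z=(1,1,z_3)$ and obtain the quadratic
\begin{equation*}
q(z_3)\,:=\,\gamma_3\beta_3^2 z_3^2-2\gamma_3\beta_3(\beta_1+\beta_2)z_3+\gamma_3(\beta_1-\beta_2)^2-\beta_1\beta_2\beta_3^2,
\end{equation*}
whose discriminant simplifies via $4\alpha_3^2=\beta_3^2+4\gamma_3$ to $16\gamma_3\alpha_3^2\beta_3^2\beta_1\beta_2\ge 0$. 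Solving, one root is exactly $\nu$ from\refq{def.nu} and the other is strictly negative. Since the leading coefficient $\gamma_3\beta_3^2$ is positive and $\partial_{z_3}K_0(1,1,z_3)>0$ (by the computation above for $\tau^+$), the sign of $\partial_{z_3}\tau^-(1,1,z_3)$ matches that of $q(z_3)$, which in turn matches the sign of $z_3-\nu$ for $z_3\ge 0$. In particular $\partial_{z_3}\tau^-(1,1,z_3)=0$ iff $z_3=\nu\in[0,1]$.

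The hard part is the algebraic identity and the quadratic simplification producing $\nu$: both expansions are straightforward in principle but require tracking several cancellations (notably $\alpha_i^2-\beta_i^2/4=\gamma_i$ and $4\alpha_3^2=\beta_3^2+4\gamma_3$) at the right moments to recover exactly the expression\refq{def.nu}. Once the identity is in place, everything else is sign-chasing under (A0).
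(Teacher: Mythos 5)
Your proof is correct. It is close in spirit to the paper's: both proofs ultimately reduce part B.2(b)(ii) to the quadratic $\xi(z_3)=4\alpha_3^2K_0-(\partial_{z_3}K_0)^2$ restricted to $(1,1,z_3)$, exploit the factorization $\xi=4K_0\,\partial_{z_3}\tau^+\,\partial_{z_3}\tau^-$ and the positivity of $\partial_{z_3}\tau^+$, and simplify via $4\alpha_3^2=\beta_3^2+4\gamma_3$ to extract $\nu$. The difference is organizational. For parts B.1 and B.2(b)(i), the paper works directly from the factored form\refq{val1.K0} of $K_0$, deducing the $L^1$-type bound $\sqrt{K_0}\ge\frac12|\beta_1z_1-\beta_2z_2-\beta_3z_3|$ and combining it with $\partial_{z_1}K_0=\frac12\beta_1(\beta_1z_1-\beta_2z_2-\beta_3z_3)$; whereas you systematically invoke the single identity $4\alpha_i^2K_0-(\partial_{z_i}K_0)^2=\gamma_i(\beta_iz_i-\beta_jz_j-\beta_kz_k)^2-4\alpha_i^2\beta_j\beta_kz_jz_k$ for all three indices $i$, which makes the sign analysis more uniform (the paper only uses this identity for $i=3$ and only in B.2(b)(ii)). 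Your appeal to the explicit linear form\refq{val.tau--anal} in B.2(a) is likewise clean; the paper computes it directly. One small point worth making explicit in B.2(b)(ii): the reduction from ``sign of $\partial_{z_3}\tau^-$'' to ``sign of $q$'' uses $\partial_{z_3}K_0(1,1,z_3)>0$, which you correctly note follows from $\beta_3<0$ and $\beta_3z_3-\beta_1-\beta_2<0$; this is equivalent to the paper's observation that $\partial_{z_3}\tau^+>0$.
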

 Proof in Appendix A.
\begin{remark}
\label{rem.nu}
 Let $\tilde\nu\in\R$ there exist $\bfepsl$ and $\bfmu$ such that $\nu=\tilde\nu$.
 Proof in Appendix A.
\end{remark}
 Lemma \ref{lem.dtau=0} implies that the thresholds of the analytically fibered
 family $(H^D(x),x\in\T^3)$ come from the values $x\in \T^3$ such that $\partial_{x_i} z_i(x)=0$
 at least for $i=1,2$, so $z_1,z_2\in \{0,1\}$, and, in addition, we have $z_3\in \{0,\nu,1\}$.

 We can determine now the set $\calT$ of thresholds.
 Setting
\begin{eqnarray*}
 Z_{\{0,1\}} &=& \{0,1\}^3, \quad Z_{\{0,1\}}^* = Z_{\{0,1\}} \sauf \{0_{\R^3}\}, \\
 X_{\{0,1\}} &=& \{ x\in \T^3; \; z\in Z_{\{0,1\}}\}, \quad X_{\{0,1\}}^* = \{ x\in \T^3; \; z\in Z_{\{0,1\}}^*\},
\end{eqnarray*}
 we obtain the following (remember also\refq{caract.T})
\begin{lemma}
\label{lem.seuils}
\begin{itemize}
 \item [1)] Case $\bfbeta=0$. 
 We have $K_0\equiv 0$, $\calX_1=\emptyset$, $\Sigma_1=\emptyset$, $\calX_2=\T^3_0$.
 Then $ \sin^2(\calX^*_2) = Z_{\{0,1\}}^*$, $\calT_1=\emptyset$, $ \calT= \{0\}\cup \calT_2$ and
$$ \calT_2 \cap \R^+ = \{\sqrt{\Psi_0(z)}; z\in Z_{\{0,1\}}^*\}  . $$
 \item [2)]  Case $\bfbeta\neq 0$ (so (A0) holds). 
 We set $\calZ^*_{\nu}=\{(1,1,\nu)\}\cap [0,1]^3$.
 Then the sets $\calX_j$, $j=1,2$, are not trivial (remember Lemma \ref{lem.K0=0}).
 We have $\calZ_1^{*+}= Z_{\{0,1\}}^* \sauf \{(\frac{\beta_2}{\beta_1},1,0)\}$,
 $\calZ_1^{*-} = Z_{\{0,1\}}^* \cup \calZ^*_{\nu} \sauf \{(\frac{\beta_2}{\beta_1},1,0)\}$
 and $\sin^2(\calX^*_2)= \{(\frac{\beta_2}{\beta_1},1,0)\}$, so
\begin{eqnarray*}
 \calT_1^+ \cap \R^+ &=& \{\sqrt{\tau^+(z)},\; z\in Z_{\{0,1\}}^* , \; z\neq (\frac{\beta_2}{\beta_1},1,0)\} ,\\
 \calT_1^- \cap \R^+ &=& \{\sqrt{\tau^-(z)},\; z\in Z_{\{0,1\}}^* \cup \calZ^*_{\nu}, \; z\neq (\frac{\beta_2}{\beta_1},1,0)\} ,\\
 \calT_2 \cap \R^+ &=& \{\sqrt{\Psi_0(\frac{\beta_2}{\beta_1},1,0)}\}.
\end{eqnarray*}
\end{itemize}
\end{lemma}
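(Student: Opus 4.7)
The plan is to read off $\Sigma_j^*$ from the definitions by combining Lemma~\ref{lem.dtau=0} (sign of the $z$-derivatives of $\tau^\pm,\Psi_0$) with the chain rule for the map $x\mapsto z=\sin^2 x$, namely $\partial_{x_i}f(z)=(\partial_{z_i}f)(z)\sin(2x_i)$. The threshold values are then obtained by evaluating $\sqrt{\tau^\pm(z)}$ and $\sqrt{\Psi_0(z)}$ on the resulting finite critical sets.

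Case $\bfbeta=0$: Theorem~\ref{th.spectreH} gives $\tau^+\equiv\tau^-\equiv\Psi_0$, $\Sigma_1=\emptyset$, and $\calX_2=\T^3_0$ open. The condition ``$\nabla_x\Psi_0(z)$ normal to $\calX_2$'' therefore reduces to $\nabla_x\Psi_0(z(x))=0$. Since $\partial_{x_i}\Psi_0(z)=2\alpha_i\sin x_i\cos x_i$ and $\alpha_i>0$ by Lemma~\ref{lem.dtau=0}(A), this forces $\sin(2x_i)=0$ for all $i$, i.e.\ $z\in Z_{\{0,1\}}$. Combined with $z\neq 0_{\R^3}$ from $\calX_2\subset\T^3_0$ this gives $\sin^2(\calX_2^*)=Z_{\{0,1\}}^*$ and hence the stated formula for $\calT_2\cap\R^+$.

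Case $\bfbeta\neq 0$: For $\Sigma_1^{*\pm}$ one has $K_0(z)\neq 0$, so by Lemma~\ref{lem.dtau=0}(B) all three derivatives $\partial_{z_i}\tau^+$ and the two derivatives $\partial_{z_i}\tau^-$ for $i=1,2$ are strictly positive. Writing $\partial_{x_i}\tau^\pm(z)=2(\partial_{z_i}\tau^\pm)(z)\sin x_i\cos x_i$, the equation $\nabla_x\tau^\pm=0$ forces, for each $i$, either $\partial_{z_i}\tau^\pm=0$ or $z_i\in\{0,1\}$. For $\tau^+$ this yields $z\in Z_{\{0,1\}}^*$; for $\tau^-$ it yields $z\in Z_{\{0,1\}}^*\cup\calZ_\nu^*$, the extra possibility $(1,1,\nu)$ arising only when $\beta_2>0$ and $\nu\in[0,1]$, by Lemma~\ref{lem.dtau=0}(B.2). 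Finally, by Lemma~\ref{lem.K0=0} the only element of these sets lying on $K_0^{-1}(\{0\})$ is $(\beta_2/\beta_1,1,0)$, which must be removed to keep the constraint $K_0(z)\neq 0$ characterizing $\Sigma_1$. This gives $\calZ_1^{*\pm}$, whence $\calT_1^\pm\cap\R^+$.

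For $\Sigma_2^*$ in this case, by Lemma~\ref{lem.K0=0} the set $\sin^2(\calX_2)$ is the line segment $\{t(\beta_2,\beta_1,0):0<t\le 1/\beta_1\}$; in particular $z_3\equiv 0$ along $\calX_2$, so $x_3\in\{0,\pi\}$. The one-dimensional tangent direction to $\calX_2$ in $x$ is the image under the chain rule of the tangent vector $(\beta_2,\beta_1,0)$ to the $z$-line, and a direct computation shows that $\nabla_x\Psi_0(z)$ is parallel to $\nabla_x K_0(z)$ exactly when $\sin(2x_1)=\sin(2x_2)=0$, i.e.\ at the endpoint $z=(\beta_2/\beta_1,1,0)$. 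This yields $\sin^2(\calX_2^*)=\{(\beta_2/\beta_1,1,0)\}$ and the stated expression for $\calT_2\cap\R^+$.

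The main obstacle I expect is the last step: verifying rigorously that along the one-parameter curve $\calX_2$ the normality condition singles out only the endpoint. Everything else is a mechanical chain-rule computation fed by the sign information already provided by Lemma~\ref{lem.dtau=0} and the explicit description of $K_0^{-1}(\{0\})$ in Lemma~\ref{lem.K0=0}.
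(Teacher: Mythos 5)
Your treatment of Case~$\bfbeta=0$ and of the sets $\Sigma_1^{*\pm}$ is correct and essentially identical to the paper's proof: in both you combine the chain rule $\partial_{x_i}f(z)=2\sin x_i\cos x_i\,\partial_{z_i}f(z)$ with the sign information from Lemma~\ref{lem.dtau=0}, and then excise $(\beta_2/\beta_1,1,0)$ via Lemma~\ref{lem.K0=0}. The gap is precisely in the step you flagged as the ``main obstacle'', the determination of $\calX_2^*$.

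Your reformulation of the normality condition as ``$\nabla_x\Psi_0(z)$ parallel to $\nabla_x K_0(z)$'' is not usable here. On $\calX_2$ the function $K_0\ge 0$ attains its minimum value $0$, so $\nabla_x K_0$ vanishes identically there (one can check this directly: on $\calX_2$ one has $z_3=0$ and $\beta_1 z_1=\beta_2 z_2$, hence $\partial_{z_1}K_0=\partial_{z_2}K_0=0$, while $\partial_{z_3}K_0=-\beta_2\beta_3 z_2\ne 0$ but $\partial_{x_3}K_0=\partial_{z_3}K_0\cdot\sin(2x_3)=0$ since $z_3=0$). Since $\calX_2$ is of codimension two cut out by the zero level of a nonnegative function, $\nabla_x K_0$ carries no normal information; one must instead pair $\nabla_x\Psi_0$ against the explicit one-dimensional tangent direction, as the paper does with $w_0(x)=(\sin x_1\cos x_2,\cos x_1\sin x_2,0)$.

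Moreover, the characterization you deduce is quantitatively wrong: you claim the singular points are where $\sin(2x_1)=\sin(2x_2)=0$, i.e.\ $z_1,z_2\in\{0,1\}$. On $\calX_2$ this forces $z_1=\beta_2/\beta_1\in\{0,1\}$, which only happens for $\beta_2\in\{0,\beta_1\}$; for generic $\beta_2\in(0,\beta_1)$ your condition selects the empty set, contradicting the asserted $\sin^2(\calX_2^*)=\{(\beta_2/\beta_1,1,0)\}$. The correct computation gives $w_0\cdot\nabla_x\Psi_0=2\cos x_1\cos x_2(\alpha_1 z_1+\alpha_2 z_2)$, which (the factor $\alpha_1 z_1+\alpha_2 z_2$ being strictly positive on $\calX_2$) vanishes iff $\cos x_1=0$ or $\cos x_2=0$; on $\calX_2$ the option $\cos x_1=0$ forces $z_1=1$ hence $\beta_2=\beta_1$, while $\cos x_2=0$ gives $z_2=1$ and $z_1=\beta_2/\beta_1$, so in all cases $z=(\beta_2/\beta_1,1,0)$. (One must also note, as the paper does, that $w_0$ itself vanishes at this point precisely when $\beta_2=\beta_1$, so the tangent argument degenerates there but the answer persists.)
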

 Proof in Appendix A.

\begin{remark}
 We have $\calX^{*+}_1\subset \calX^{*-}_1$ and the inclusion is an equality
 if and only if $\nu\not\in (0,1)$.
\end{remark}
\begin{remark}
 Let us consider the case $\beta_2=0$.
 By definition of $\calT_2$ we have
%
$$ \calT_2 \cap \R^+ = \{\sqrt{\Psi_0(z)}; \; z\neq 0 , \; \nabla_x \sqrt{\Psi_0(z)}=0\}. $$
%
 Then, the eigenvalues $\sqrt{\tau^\pm}$ are analytic in $[0,+\infty)^3\sauf\{0_{\R^3}\}$,
 since the functions $\tau^\pm$ are linear and positive in $[0,+\infty)^3\sauf \{0_{\R^3}\}$.
 Hence it would be an alternative to replace $\calT_2$ by $\calT'_2$ where we set
\begin{eqnarray*}
 \calT'_2 &:=& \calT^{'+}_2 \cup \calT^{'-}_2, \quad \calT^{'-}_2 := - \calT^{'+}_2,\\
 \calT^{'+}_2 &:=& \{\sqrt{\tau^\pm(z)}; \; z\neq 0 , \; \nabla_x \sqrt{\tau^\pm(z)}=0\}=\calT'_2\cap \R^+.
\end{eqnarray*}
 But, thanks to Points B) 1) and 2) a) of Lemma \ref{lem.dtau=0} we obtain
$$ \calT^{'+}_2 =  \{\sqrt{\tau^\pm(z)}; \; z\in \{0,1\}^3, \; z\neq 0_{\R^3} \} = \calT_2\cap \R^+, $$
 so the sets $\calT'_2$ and $\calT_2$ coincide.
\end{remark}

\section{The conjugated operator}
\label{sec.5}
 In this section we consider a set $\calT'\subset \calT$ and a function
 $\phi\in\calC^\infty_c((0,+\infty)\sauf\calT';\R)$.
 We construct an adequate conjugated operator $A_\phi$ to $H$ on $\supp\phi$.

\subsection{Eigenprojectors}
 If $\bfbeta=0$ then $\Sigma_1=\emptyset$ and the function $\sqrt{\Psi_0(y^2)}$ is analytic
 in $\R^3\sauf\{0_{\R^3}\}$.
 The associated orthogonal eigenprojection
\begin{equation}
\label{def.pi2}
 \pi_2(y) := \frac1{2i\pi} \int_{\calC} (h^D(y)-\zeta)^{-1} d\zeta  \quad \forall y\neq 0, 
\end{equation}
 where $\calC\subset\C$ is a complex contour containing $\sqrt{\Psi_0(z)}$ but not $0$,
 is then analytic in $\R^3\sauf\{0_{\R^3}\}$ and has range two.
 
 Let us assume $\bfbeta\neq 0$.
 Let us denote by $\pi_1^\pm(y)$ the orthogonal eigenprojection on $\ker(h^D(y)-\sqrt{\tau^\pm(z)})$,
 i.e.,
$$ \pi_1^\pm(y) := \frac1{2i\pi} \int_{\calC} (h^D(y)-\zeta)^{-1} d\zeta \quad \forall y\in \sin(\calX_1) , $$
 where $\calC$ is a contour containing $\sqrt{\tau^\pm(z)}$ but no other eigenvalue of $h^D(y)$.
 Let again $\pi_2(y)$ be defined by\refq{def.pi2} where now $\calC$ is a contour containing both
 $\sqrt{\tau^+(z)}$ and $\sqrt{\tau^-(z)}$ but no other eigenvalue. 
 Thus $\pi_2(y)$ is the orthogonal eigenprojection  on $\ker(h^D(y)-\sqrt{\tau^+(z)}) + \ker(h^D(y)-\sqrt{\tau^-(z)})$,
 and 
%
$$ \pi_2(y)=\pi_1^+(y) \oplus \pi_1^-(y) \quad  \forall y\in \sin(\calX_1). $$
%
 Each $\pi_1^\pm(y)$ has range one and $\pi_2(y)$ has range two. 
 Each $\pi_1^\pm$ is analytic at $y\in \sin(\calX_1)$ (as subset of $\R^3$),
 $\pi_2$ is analytic on $\sin(\R^3)\sauf\{0_{\R^3}\}$ ($\subset\R^3$), and
\begin{eqnarray*}
 h^D(y)\pi_2(y) &=& \sqrt{\tau^+(z)}\pi_1^+(y)+\sqrt{\tau^-(z)}\pi_1^-(y) \quad \forall y\in \sin(\calX_1),\\
 h^D(y)\pi_2(y) &=& \sqrt{\Psi_0(z)}\pi_2(y) \quad \forall y\in \sin(\calX_2).
\end{eqnarray*}

\subsection{Global tangent field to $\calX_2$}
\label{sec.tangentfield}
 If $\beta_2=0$ then the $\tau^\pm$'s and $\pi_1^\pm$'s extend analytically into
 $\R^3\sauf\{0_{\R^3}\}$ with the relation
%
$$  \pi_2(y)=\pi_1^+(y) + \pi_1^-(y) \quad \forall y\neq 0_{\R^3}. $$
%
 In addition, in Case (A0) (with $\beta_2=0$), the sum $\pi_1^+(y) + \pi_1^-(y)$ is direct.

 Let us assume $\beta_2\neq 0$ (with Assumption (A0)) and make a precise description
 of $\calX_2$.
 A point $x\in\T^3$ belongs to $\calX_2$ iff $z\neq 0$ and $z_3=0=\beta_1 z_1 - \beta_2 z_2$.
 The last relation can be written
%
$$ \beta_1 y_1^2 = \beta_2 y_2^2, \quad y_2\in [-1,1]\sauf\{0\}. $$
%
 When an nonzero eigenvalue of $H^D(x)$ (respect., of $h^D(y)$) is not simple then
 the stratification method explained in \cite{GER.MOU} involves a tangential vector field
 to the set $\calX_2$ (respect., to $\sin(\calX_2)$):
%
 $w(x) := (\sin(x_1)\cos(x_2),\cos(x_1)\sin(x_2),0)$,
%
 (respect., $\tilde w(y) := (y_1,y_2,0)$ ).
%
%
 We observe that $|w(x)|\neq 0$ for all $x\in \calX_2\sauf\calX^*$,
 and $|w(x)|\neq 0$ for all $x\in \calX_2$ if $\beta_2\in (0,\beta_1)$.
 If $\beta_2=\beta_1$ then $w$ vanishes at all $x^*\in \calX_2^*$ since $z^*_1=z^*_2=1$.

\medskip
 We introduce the following notations.
 Letting a function $f$: $\T^3$ or $\R^3$ $\to \C^n$ and a vector field $v(x)=(v_j(x))_{1\le j\le n}\in\C^n$,
 then $v\cdot\nabla_x f$ is the vectorial function
 $x\mapsto \sum_{j=1}^n v_j(x) \partial_{x_j} f(x) \in \C^n$.
 We set also
\begin{eqnarray*}
  f_w &:=& w\cdot\nabla_x f,\\
\tilde f_{\tilde w} &:=& \tilde w\cdot\nabla_y f.
\end{eqnarray*}
 We thus have
\begin{equation}
\label{val.fw}
 f_w(x) = \cos(x_1)\cos(x_2) \tilde f_{\tilde w}(y) .
\end{equation}

\subsection{First cut--off functions}
\label{subsec.cutoff}
 We consider the following metric on $\T^3\approx (\R/(2\pi\Z))^3$:
$$ \rmd_0(x,x^*)=|e^{ix}-e^{ix^*}|  \quad x^*, \, x\in \T^3 .$$
 We denote $\rmd_0(x,E)=\inf\{\rmd_0(x,x^*)$ $|\, x^*\in E\}$ when $E\subset\T^3$. 
 We consider a  cut--off function $\varphi_1\in \calC^\infty(\R;[0,1])$ such that
 $\supp \varphi_1\subset \{s\in\R;\: |s|<1\}$ and $\varphi_1=1$ in $\{s;\: |s|<1/2\}$.
 Let $b,b_0$ with $0<b<b_0/2$ two small parameters which will be precised later.
 We separate the eigenvalue 0 from $\R^+$, and, equivalently, $\calX_0$ from $\T^3$,
 with the cut--off function $\chi_0(x) := \varphi_1(|z|/b_0)$.
 Since $\supp\phi\subset (0,\infty)$, we can fix $b_0$ sufficiently small such that:
%
$$ \{\sqrt{\tau^+(z)},\sqrt{\tau^-(z)}\}\cap \supp\phi  \neq\emptyset \Rightarrow  \chi_0(x)=1. $$
%
 In addition we set
\begin{eqnarray*}
 \chi_{x^*}(x) &:=& \varphi_1(\rmd_0(x,x^*)/b) \quad x^*, \, x\in \T^3,\\
\chi^{*+}(x) &:=& (1-\chi_0(x)) \Pi_{x^*\in \calX^{*+}_1} (1-\chi_{x^*}) , \\
\chi^{*-}(x) &:=& (1-\chi_0(x)) \Pi_{x^*\in \calX^{*-}_1} (1-\chi_{x^*}),\\
 \chi^*(x)&:=& (1-\chi_0(x)) \Pi_{x^*\in \calX^*} (1-\chi_{x^*}),
\end{eqnarray*}
 so $\chi^*$ vanishes in $\{x\in \T^3; \rmd_0(x,\calX^*)<b/2\}$ and in $\{x\in \T^3; |z|<b_0/2\}$;
 we have also $\chi^*(x)=1$ if $\rmd_0(x,\calX^*)>b$ and $|z|>b_0$.
 It means that $\chi^*$ is a smooth cut-off function localizing in the complement
 of $\calX^*\cup \calX_0$, and, since $\calX^*\cup \calX_0$ is a discrete set (and finite),
 we then have, for $b_0>0$ sufficiently small,
\begin{eqnarray*}
 1-\chi^*(x) =  \chi_0(x) + \sum_{x^*\in \calX^*} \chi_{x^*}(x),\\
 1-\chi^{*\pm}(x) =  \chi_0(x) + \sum_{x^*\in \calX^{*\pm}_1} \chi_{x^*}(x).
\end{eqnarray*}
\subsection{The conjugated operator outside thresholds}
\label{ssec.out}
 Case $\bfbeta=0$. 
 Remember that we have $\pi_1^+=\pi_1^-=\pi_2$ which is analytic in $\R^3\sauf \{0_{\R^3}\}$.
 We set, for $u\in \calC^\infty(\T^3)$,  $x\in \T^3$,
%
$$
  \calA_{out} u\: (x) :=  
    i \chi^*(x) \pi_2(y) \frac{\nabla_x \sqrt{\Psi_0(z)}}{|\nabla_x \sqrt{\Psi_0(z)}|^2}
     \nabla_x (\chi^*(x) \pi_2(y) u(x)).
$$
%
 Case $\bfbeta\neq0$ and $\beta_2=0$ (with Assumption (A0)).
 Remember that the functions $\tau^\pm(\cdot)$ are analytic in $\R^3$
 (see\refq{val.tau+-anal} and\refq{val.tau--anal}) and are positive in $[0,+\infty)^3\sauf \{0_{\R^3}\}$.
 Thus, the eigenvalues $\sqrt{\tau^\pm(\cdot)}$ are analytic in $[0,+\infty)^3\sauf \{0_{\R^3}\}$.
 We set, for $u\in \calC^\infty(\T^3)$,  $x\in \T^3$,
%
$$
   \calA_{out} u\: (x) := 
   \sum_\pm i \chi^*(x)  \pi_1^\pm(y) \frac{\nabla_x \sqrt{\tau^\pm(z)}}
  {|\nabla_x \sqrt{\tau^\pm(z)}|^2} \nabla_x (\chi^*(x) \pi_1^\pm(y) u(x)) .
$$
%
 Case $\beta_2\neq0$ (with Assumption (A0)).
 Firstly, we have $\calX_1^{*+} \subset \calX_1^{*-}$, but not necessarily the converse inclusion.
 Actually, if $\nu\in (0,1)$ (remember Lemma \ref{lem.seuils}) then
 the value $\sqrt{\tau^-(1,1,\nu)}$ is a threshold be not necessarily $\sqrt{\tau^+(1,1,\nu)}$,
 so we may have $x_\nu^* \in  \calX_1^{*-} \sauf \calX_1^{*+}$.\\
 Secondly, in aim to have $H^D\in \calC^\infty(A_{out})$, we need to separate $\calX_2\subset\partial \calX_1$
 from $\calX_1$, as explained in \cite{GER.MOU2}. 
 Since $\ove{\calX_2}=\calX_2\cup\calX_0=\{x\in \T^3; K_0(z)=0\}$ is compact then
 there exist two smooth cut-off functions, $\chi_1$ and $\chi_2$ in $\calC^\infty(\T^3;[0,1])$,
 such that $\supp \chi_2\subset \{x$; $\rmd_0(x,\calX_2)\le 2b\}$,
 $\chi_2(x)=1$ if $\rmd_0(x,\calX_2)\le b$, 
 $\chi_1(x)=1$ if $\rmd_0(x,\calX_2)\ge 3b$, and
 $\supp \chi_1\subset \{x$; $\rmd_0(x,\calX_2)\ge 2b\}$.
 Thus $\supp \chi_1\cap \supp \chi_2=\emptyset$ and $\chi_2=1$ on $\ove{\calX_2}$.
 We then set $\chi_3:= 1-\chi_1-\chi_2$ so $\supp\chi_3\subset \{x$; $b\le\rmd_0(x,\calX_2)\le 3b\}$,
 and
%
$$ \sum_{j=1}^3 \chi_j^2(x) >0 \quad \forall x\in \T^3 . $$
%
 See Figure \ref{Zones}.
\begin{figure}[h]
\caption{Cut-off}
\label{Zones}
\includegraphics[scale=0.4]{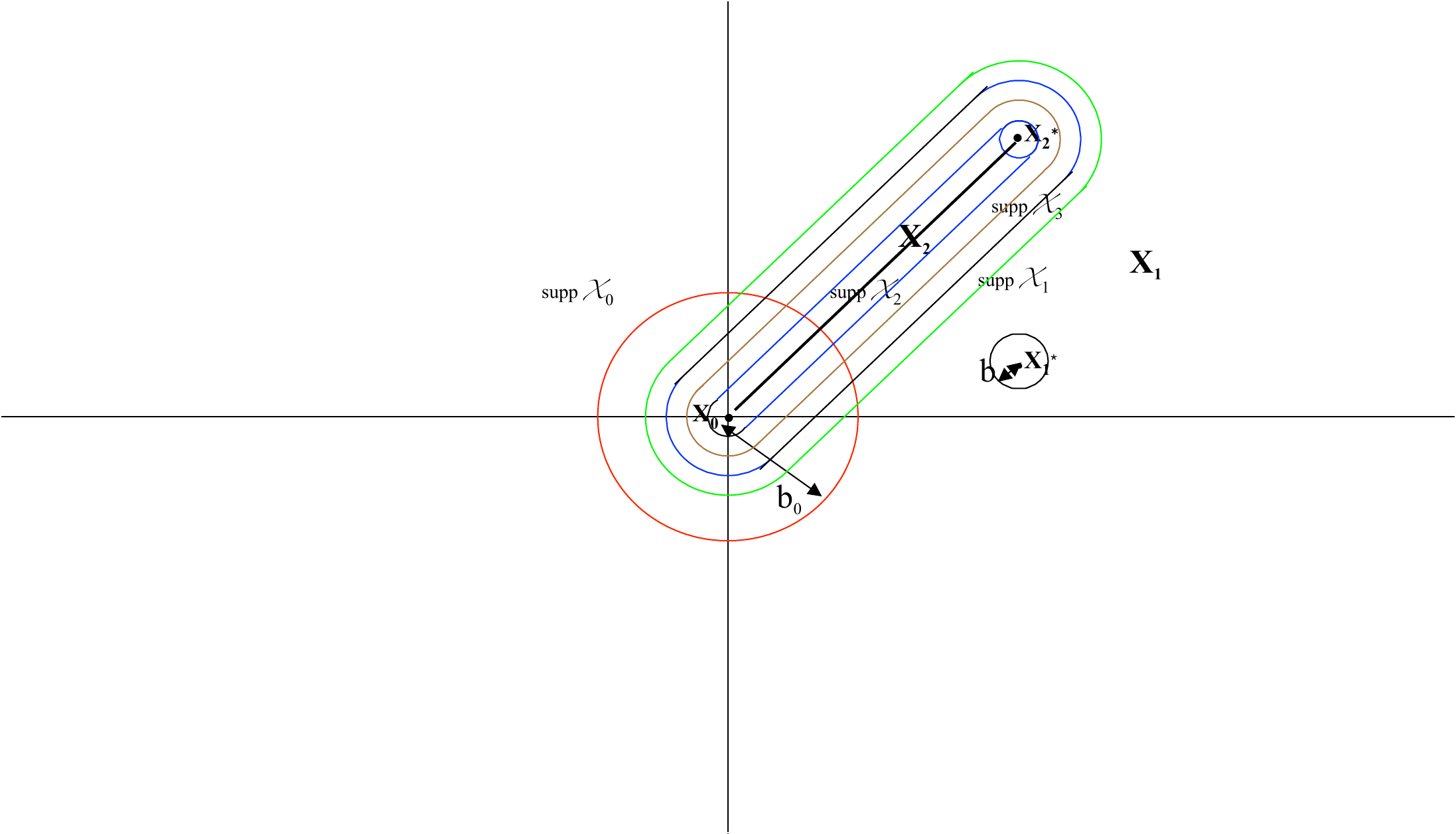} 
\end{figure}
 We have in addition ($b$ being sufficiently small)
\begin{eqnarray}
 \supp \chi_{x^*}\subset\supp \chi_1\sauf \supp \chi_3 \quad \forall x^*\in\calX_1^* ,\\
\label{prop.suppchi22}
 \supp \chi_{x^*}\subset\supp \chi_2\sauf \supp \chi_3 \quad \forall x^*\in\calX_2^* .
\label{prop.suppchi23}
\end{eqnarray}
\begin{eqnarray*}
 \chi^*_j(x) &:=& \chi^*(x)\chi_j(x) \quad j=2,3\\
 \chi^{*\pm}_1(x) &:=& \chi^{*\pm}(x)\chi_1(x).
\end{eqnarray*}
 (In fact, Relations \refq{prop.suppchi22} and \refq{prop.suppchi23} imply $\chi^*_3 = (1-\chi_0)\chi_3$.)
 The function $\chi^*_2$ is a smooth cut-off localizing in $\calX_2\sauf \calX^*$
 while $\chi^{*\pm}_1$ is a smooth cut-off localizing in $\calX_1\sauf \calX^{*\pm}_1$.

 For $u\in \calC^\infty(\T^3)$, $x\in \T^3$, we set,
\begin{eqnarray*}
  \calA_{out} u\: (x) &:=& 
  \sum_\pm  i \chi^{*\pm}_1 (x) \pi_1^\pm(y) \frac{\nabla_x \sqrt{\tau^\pm(z)}}
  {|\nabla_x \sqrt{\tau^\pm(z)}|^2} \nabla_x (\chi^{*\pm}_1(x) \pi_1^\pm(y) u(x)) \\
 && + i \chi^*_2(x)  (\sqrt{\Psi_0(z)}_w)^{-1} \pi_2(y) \; (\chi^*_2(x) \pi_2(y) u(x))_w\\
 && +  \sum_\pm  i \chi^*_3(x)  (\sqrt{\Psi_0(z)}_w)^{-1} \pi_1^\pm(y) \; (\chi^*_3(x) \pi_1^\pm(y) u(x))_w.
\end{eqnarray*}
\begin{remark}
 The function $\sqrt{\Psi_0(z)}_w$ may vanish at points of $\calX^*$ but not of $\calX_2\sauf\calX^*$
 so $\chi^*_2(x) (\sqrt{\Psi_0(z)}_w)$ is well-defined (for $b$ sufficiently small).
 For more details, see the proof of\refq{val.Tw} below.
 Similarly, the function $x\mapsto |\nabla_x \sqrt{\tau^\pm(z)}|$ is positive
 in $\supp\chi^{*\pm}_1$.
\end{remark}
 In each case we symmetrize $\calA_{out}$ by setting
%
$$  A_{out} := \calA_{out} + \calA_{out}^*, $$
%
 with domain $\calC^\infty(\T^3)$. Here $\calA_{out}^*$ is the hermitian conjugate of $\calA_{out}$.
 By observing that the mappings $x\mapsto \chi_j(x)\pi_1^\pm(y)$ for $j=1,3$, and
 $x\mapsto (1-\chi_0(x))\pi_2(y)$ are smooth, then
 $A_{out}$ is a symmetric first order differential operator in $x$ whose coefficients belong to
 $\calC^\infty(\T^3;\calL(\C^6)$). It is then essentially self-adjoint on $\calHD$ (see \cite[Lemma 3.10]{GER.MOU}).
 Since $D(H^D)=\calHD$, some possible problematic points of the Mourre Theory
 then become trivial (see \cite{GEO.COM}).
 
\subsection{"Punctual" Mourre's estimate outside thresholds} 
\label{sec.Hout}
 We set
$$ H_{1,out}(x) := [H^D,iA_{out}](x) .$$
 Similarly to proof of the Mourre's estimate in \cite{GER.MOU} we show that if the positive
 parameter $b$ is sufficiently small then $A_{out}$ is strictly conjugated to $H$ on $I$.
 
 Case $\beta_2\neq0$ under assumption (A0).
 Let $u\in \calC^\infty(\T^3)$, we have
\begin{eqnarray*}
  && -i\calA_{out}\circ H^D u\: (x) =\\
  &\quad&  \sum_\pm \chi^{*\pm}_1 (x) \pi_1^\pm(y) \frac{\nabla_x \sqrt{\tau^\pm(z)}}
  {|\nabla_x \sqrt{\tau^\pm(z)}|^2} \nabla_x (\chi^{*\pm}_1(x) \pi_1^\pm(y) h^D(y) u(x)) \\
 && + \chi^*_2(x) \pi_2(y) (\sqrt{\Psi_0(z)}_w)^{-1}\;  (\chi^*_2(x) \pi_2(y) h^D(y) u(x))_w \\
  && + \sum_\pm \chi^*_3(x) \pi_1^\pm(y) (\sqrt{\Psi_0(z)}_w)^{-1} \; (\chi^*_3(x) \pi_1^\pm(y)h^D(y) )_w.
\end{eqnarray*}
 By using 
%
$$
 (\pi_1^\pm(y))^2=\pi_1^\pm(y), \quad \pi_1^\pm(y) h^D(y) = h^D(y)\pi_1^\pm(y),
\quad \pi_2(y) h^D(y)=  h^D(y)\pi_2(y), 
$$
%
 we obtain the expression of $[H^D,i\calA_{out}]$ as a multiplication operator:
\begin{eqnarray*}
  [H^D,i\calA_{out}](x) &=&  (iH^D \circ \calA_{out}-i\calA_{out}\circ H^D)(x) \\
 &=&  \sum_\pm (\chi^{*\pm}_1(x))^2 \frac{\nabla_x \sqrt{\tau^\pm(z)}}
  {|\nabla_x \sqrt{\tau^\pm(z)}|^2} \pi_1^\pm(y) \nabla_x (\pi_1^\pm(y) h^D(y)) \pi_1^\pm(y) \\
 && + (\chi^*_2(x))^2 (\sqrt{\Psi_0(z)}_w)^{-1} \; \pi_2(y)   (h^D(y) \pi_2(y))_w \pi_2(y) \\ 
 &&  + \sum_\pm (\chi^*_3(x))^2  (\sqrt{\Psi_0(z)}_w)^{-1} \; \pi_1^\pm(y)(h^D(y) \pi_1^\pm(y))_w  \pi_1^\pm(y).
\end{eqnarray*}
 In $\calX_1$ we have
%
$$
 \pi_1^\pm(y) \nabla_x (h^D(y) \pi_1^\pm(y)) \pi_1^\pm(y) =
  \nabla_x \sqrt{\tau^\pm(z)} \pi_1^\pm(y),
$$
%
 so,
%
$$
 |\nabla_x \sqrt{\tau^\pm}(z)|^{-2} \pi_1^\pm(y)\nabla_x \sqrt{\tau^\pm}(z)
  \cdot \nabla_x (h^D(y) \pi_1^\pm(y))  \pi_1^\pm(y) =  \pi_1^\pm(y),
$$
%
%
$$
 \sum_\pm \pi_1^\pm(y) (\sqrt{\tau^\pm}_w(x))^{-1} w \cdot \nabla_x (h^D(y) \pi_1^\pm(y))
 = \sum_\pm \pi_1^\pm(y) = \pi_2(y).
$$
 Let us make the following computations near $\calX_2$, precisely, in $\supp\chi_2^*\cup\supp\chi^*_3$.
 Setting $\xi(y) := h^D(y)\pi_2(y)-\sqrt{\Psi_0(z)}\pi_2(y)$, we have
$$
  \pi_2(y)\; w(x) \cdot \nabla_x (h^D(y) \pi_2(y)) \pi_2(y)  =  \sqrt{\Psi_0(z)}(x) \pi_2(y) +  \pi_2(y)\; \xi_w(x)  \pi_2(y).
$$
 Thus,
\begin{eqnarray}
\nonumber
 \frac12 H_{1,out}(x) & = & \sum_\pm (\chi^{*\pm}_1(x))^2 \pi_1^\pm(y)  + ((\chi^*_2(x))^2 + (\chi^*_3(x))^2) \pi_2(y)  \\
 &&  + (\chi^*_2(x))^2  \pi_2(y) (\sqrt{\Psi_0(z)})^{-1}\; \xi_w(x)\pi_2(y).
\label{valfin3.HDAb}
\end{eqnarray}
 For $x\in\calX_2$ we have $\xi(y)=0$, so, since $\tilde w$ is a tangent field
 to $\sin(\calX_2)$,
$$ \tilde \xi_{\tilde w}(y) = 0 , \quad \forall x\in\calX_2.$$
 For $x\in\calX_2$ we have
\begin{equation}
\label{val.Tw}
 \tilde{\sqrt{\Psi_0(z)}}_{\tilde w} = (\Psi_0(z))^{-1/2}(\alpha_1 z_1 + \alpha_2 z_2)>0.
\end{equation}
 In addition, since the relation
%
$$ (\sqrt{\Psi_0(z)}_w)^{-1} \xi_w(x) = (\tilde{\sqrt{\Psi_0(z)}}_{\tilde w})^{-1} \tilde \xi_{\tilde w}(y) $$
%
 holds true for $x\not\in\calX^*$ and $\tilde{\sqrt{\Psi_0(z)}}_{\tilde w} \neq 0$,
 then the function $ (\sqrt{\Psi_0(z)}_w)^{-1} \xi_w$ is defined and is smooth in the compact set
 $\supp(1-\chi_0)\chi_2 \subset \supp\chi_2 \subset \{x\in \T^3$; $\rmd_0(x,\calX_2)\le 2b\}$,
 and vanishes on $\calX_2$.
 Hence, for $b$ sufficiently small, we have
\begin{equation}
\label{comp.Twxiw}
 \| (\sqrt{\Psi_0(z)}_w)^{-1} \xi_w(x)\|_\infty< \frac12 \quad \forall x\in \supp(1-\chi_0)\chi_2, 
\end{equation}
 where $\|\cdot\|_\infty$ denotes here the usual infinite norm on matrices.
 From\refq{valfin3.HDAb},\refq{comp.Twxiw}, we then obtain
\begin{equation}
 H_{1,out}(x) \ge  \sum_\pm (\chi^{*\pm}_1(x))^2 \pi_1^\pm(y) +( (\chi^*_2(x))^2  + (\chi^*_3(x))^2 )\pi_2(y).
\label{min3.H1out}
\end{equation}

\begin{remark}
 In the two other cases where $\beta_2=0$ we obtain
\begin{equation}
 \frac12 H_{1,out}(x)  = (\chi^*(x))^2 \pi_2(y),
\label{valfin12.HDAb}
\end{equation}
 so the punctual Mourre's estimate becomes simply
%
$$ H_{1,out}(x) \ge  2(\chi^*(x))^2 \pi_2(y). $$
%
\end{remark}

\subsection{Smoothness}
\label{ssc.smooth1}
 Relations\refq{valfin12.HDAb} and\refq{valfin3.HDAb} show that the symmetric form
 $H_{1,out}$ defined on $\calC^\infty(\T^3)$ is a multiplication operator on $\calHD$
 by smooth coefficients, so is bounded and closeable. 
 Thus, $[H_{1,out},iA_{out}]$ is a differential operator of order one at most.
 But when computing it's first order term we have to check only that $H_{1,out}$
 is commuting with each coefficient of the first order terms of $-i\calA_{out}(x)$.
 In fact, the possible problematic bracket arising from the calculation of
 $[H_{1,out},i\calA_{out}]$ is, in the case $\beta_2\neq0$, 
\begin{eqnarray*}
 {[}(\chi^*_2(x))^2 (\sqrt{\Psi_0(z)}_w)^{-1} \pi_2(y)\; \xi_w(x)\pi_2(y) &, 
 (\chi^{*\pm}(x))^2 \frac{\nabla_x \sqrt{\tau^\pm(z)}} 
  {|\nabla_x \sqrt{\tau^\pm(z)}|^2} \cdot\\
 & \cdot\pi_1^\pm(y) \nabla_x (h^D(y)\pi_1^\pm(y) ) \pi_1^\pm(y)].
\end{eqnarray*}
 But since $\chi_1\chi_2=0$ then this bracket vanishes.
 Hence, $[H_{1,out},iA_{out}]$ is a multiplication operator, is bounded in $\calH$,
 and we have $H^D\in \calC^2(A_{out})$. 
 By induction we see that $H^D\in \calC^\infty(A_{out})$. (See also \cite{GER.MOU2}.)

\subsection{The conjugated operator near thresholds}
\subsubsection{Enumeration of the different cases}
 Since our proof of the LAP at each threshold related to some $x^*\in \calX^*$ requires
 a special treatment which depends on the values of $\bfbeta$ and of $x^*$, we enumerate
 the different cases as follows.
\begin{itemize}
\item[Case 1] $\bfbeta=0$ and $x^*\in \calX_2^*$.
\item[Case 2]  $\bfbeta\neq 0$ (so (A0) holds) and $x^*\in \calX_1^*$.
\begin{itemize}
\item[Subcase 2-1] $\beta_2=0$.
\item[Subcase 2-2]  $\beta_2>0$.
\begin{itemize}
\item[Subcase 2-2-a] $x^*\in \calX_1^{*-}$ and $z^*_1=z^*_2=1$ and $z^*_3= \nu\in (0,1)$.
\item[Subcase 2-2-b]  $x^*\in \calX_1^{*-}$ and $z^*_1=z^*_2=1$ and $z^*_3\neq \nu$.
\item[Subcase 2-2-c]  $x^*\in \calX_1^{*-}$ and $z^*_1=z^*_2=1$ and $z^*_3= \nu\in\{0,1\}$.
\item[Subcase 2-2-d]  $x^*\in \calX_1^{*-}$ and ($z^*_1=0$ or $z^*_2=0$).
\item[Subcase 2-2-e]  $x^*\in \calX_1^{*+}$.
\end{itemize}
\end{itemize}
\item[Case 3]  $\bfbeta\neq 0$ and $x^*\in \calX_2^*$.
\begin{itemize}
\item[Subcase 3-1)]  $\beta_2=0$.
\item[SubCase 3-2]  $\beta_2=\beta_1$.
\item[SubCase 3-3]  $\beta_2\in (0,\beta_1)$.
\end{itemize}
\end{itemize}
\begin{remark}
\label{rem.2-2-c}
 In Case 2-2-c, if $\beta_1=\beta_2$ then $(1,1,0) \in \sin^2 \calX_2^*$ so $\nu=1$.
\end{remark}
\subsubsection{Behaviour of the eigenvalues of $H^D(x)$ at a threshold}
\label{sec.behave}
 We set $s^*_j=1-2z^*_j$ if $z^*_j\in \{0,1\}$, $j\in [[1,3]]$, so $s^*_j \in \{-1,1\}$.
 We set also $s_j=s^*_j$ for $j=1,2$.\\
 In Case 1 we set $V= \sqrt{}\circ\Psi_0\circ\sin^2$ and $s_3 :=s^*_3$.\\
 In Case 2 with $x^*\in \calX_1^{*\pm}$ and in Case 3-1 we set $V= \sqrt{}\circ\tau^\pm\circ\sin^2$ and\\
 - in Cases 2-1 and 2-2-d and 2-2-e, and 3-1 we set $s_3 :=s^*_3$;\\
 - in Case 2-2-a we set $s_3 :=1$;\\
 - in Case 2-2-b we set $s_3 := \sgn(z^*_3-\nu)s^*_3$;\\
\textcolor{red}{ - in Case 2-2-c we set $s_3 :=0$.}

\begin{lemma}
\label{lem.dV}
 In Cases 1 and 2-1 and 2-2-a and 2-2-b and 2-2-d and 2-2-e and 3-1) we have
\begin{equation}
\label{prop1.dV}
 \rmd V(x) = (\sum_{j=1}^3 C_j s_j (x_j-x^*_j) \rmd x_j)(1+O(\rmd_0(x,x^*))),
\end{equation}
 as $x\to x^*$, where $C_j>0$, $j=1,2,3$.\\
 In Case 2-2-c we have
\begin{eqnarray}
\nonumber
 \rmd V(x) &=& (-\sum_{j=1}^2 C_j (x_j-x^*_j) \rmd x_j  + C_3 (x_3-x^*_3)^3 \rmd x_3)
  (1+ O(\rmd_0(x,x^*))),\\
\label{prop2.dV}
\end{eqnarray}
 as $x\to x^*$, where $C_j>0$, $j=1,2,3$.
\end{lemma}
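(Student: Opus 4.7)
The plan is to compute $\rmd V$ directly from the chain rule applied to $V = \sqrt{T\circ\sin^2}$, where $T\in\{\Psi_0,\tau^+,\tau^-\}$ is chosen by the case. Setting $\eta_j := x_j - x^*_j$, one has
$$
\partial_{x_j} V(x) = \frac{\sin(2x_j)}{2V(x)}\,\partial_{z_j}T(z(x)),
$$
and since $x^*$ is a critical point of $V$ (the threshold condition), $V(x) = V(x^*) + O(\rmd_0(x,x^*)^2)$, reducing everything to expansions of the numerator. Two elementary Taylor expansions are used: if $z^*_j\in\{0,1\}$, then $\sin(2x^*_j)=0$ and $\cos(2x^*_j)=s^*_j$, giving $\sin(2x_j) = 2 s^*_j \eta_j + O(\eta_j^3)$ and $z_j - z^*_j = s^*_j \eta_j^2 + O(\eta_j^4)$; if $z^*_3=\nu\in(0,1)$ (only in Subcase 2-2-a), then $c_3 := \sin(2x^*_3) \neq 0$ and $z_3 - z^*_3 = c_3 \eta_3 + O(\eta_3^2)$. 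A useful auxiliary observation is that the Hessian of $V$ at $x^*$ is diagonal: differentiating $2V\partial_{x_j}V = \partial_{z_j}T(z)\sin(2x_j)$ once more and evaluating at $x^*$ gives
$$
2V(x^*)\,\partial^2_{x_j x_k} V(x^*) = \partial^2_{z_j z_k}T(z^*)\sin(2x^*_j)\sin(2x^*_k) + 2\partial_{z_j}T(z^*)\cos(2x^*_j)\,\delta_{jk},
$$
whose off-diagonal ($j\neq k$) entries vanish in every configuration considered because at least one of $z^*_j, z^*_k$ lies in $\{0,1\}$.

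For the non-degenerate cases (Case 1, Subcases 2-1, 2-2-b, 2-2-d, 2-2-e, 3-1), every $z^*_j\in\{0,1\}$ and Lemma \ref{lem.dtau=0} gives $\partial_{z_j}T(z^*)\neq 0$ with $\sgn(\partial_{z_j}T(z^*))\,s^*_j = s_j$: for Case 1 and Subcases 2-1, 3-1 this follows from Part A or B.1; for Subcase 2-2-b one uses the sign-change statement (Point B.2.b.(ii)) which yields $\sgn(\partial_{z_3}\tau^-(z^*)) = \sgn(z^*_3 - \nu)$, matching the definition $s_3 := \sgn(z^*_3-\nu)\,s^*_3$. Substituting the Taylor expansions produces
$$
\partial_{x_j}V(x) = C_j s_j \eta_j\bigl(1 + O(\rmd_0(x,x^*))\bigr), \qquad C_j := |\partial_{z_j}T(z^*)|/V(x^*) > 0,
$$
which is \refq{prop1.dV}. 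For Subcase 2-2-a ($\partial_{z_3}\tau^-(z^*)=0$ but $c_3\neq 0$), one expands $\partial_{z_3}\tau^-(z(x)) = \partial^2_{z_3 z_3}\tau^-(z^*)\,c_3\,\eta_3 + O(\rmd_0^2)$ and multiplies by $\sin(2x_3) = c_3 + O(\rmd_0)$; strict positivity $\partial^2_{z_3 z_3}\tau^-(z^*) > 0$ comes from the sign-change of $\partial_{z_3}\tau^-(1,1,\cdot)$ at $\nu\in(0,1)$ in Point B.2.b.(ii) of Lemma \ref{lem.dtau=0}, yielding \refq{prop1.dV} with $s_3 = +1$ and $C_3 = \partial^2_{z_3 z_3}\tau^-(z^*)\,c_3^2/(2V(x^*))$.

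Subcase 2-2-c is the main source of technical difficulty: $\sin(2x^*_3)$ and $\partial_{z_3}\tau^-(z^*)$ vanish simultaneously, so the diagonal Hessian of $V$ at $x^*$ in the $x_3$-direction is zero and one must push the expansion through order four. Here $z^*_3=\nu\in\{0,1\}$, and combining $\sin(2x_3) = 2s^*_3 \eta_3 + O(\eta_3^3)$ with $\partial_{z_3}\tau^-(z(x)) = \partial^2_{z_3 z_3}\tau^-(z^*)\,s^*_3\,\eta_3^2 + \sum_{k=1,2}\partial^2_{z_k z_3}\tau^-(z^*)\,s^*_k\,\eta_k^2 + O(|\eta|^4)$ produces a cubic leading term $C_3 \eta_3^3$ with $C_3 = \partial^2_{z_3 z_3}\tau^-(z^*)/V(x^*) > 0$ (positivity at the boundary $\nu\in\{0,1\}$ following from Point B.2.b.(ii) of Lemma \ref{lem.dtau=0} applied on the one-sided neighborhood $z_3\in[0,1]$, which forces the first nonzero $z_3$-derivative of $\partial_{z_3}\tau^-(1,1,\cdot)$ at $\nu$ to be of odd order with positive coefficient, generically of order one), plus cross contributions of the form $\eta_3(\eta_1^2+\eta_2^2)$ and $O(|\eta|^5)$. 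Combined with $\partial_{x_j}V(x) = -C_j\eta_j(1+O(\rmd_0))$ for $j=1,2$, this yields $\rmd V(x) = \omega_0(x) + R(x)$ where $\omega_0 := -\sum_{j=1,2} C_j \eta_j\,\rmd x_j + C_3 \eta_3^3\,\rmd x_3$ and $|R|^2 = O(\rmd_0^6)$, so $|R| \le C\,\rmd_0\cdot|\omega_0|$ in the Euclidean sense, which is the content of \refq{prop2.dV}. The main obstacle is precisely this last step: verifying strict positivity of $\partial^2_{z_3 z_3}\tau^-(z^*)$ at the boundary values of $\nu$, and showing that the cross contributions $\eta_3(\eta_1^2+\eta_2^2)$ are absorbed into the scalar factor $(1+O(\rmd_0))$ in the intended interpretation, since their pointwise ratio to the cubic main term $C_3\eta_3^3$ is not uniformly bounded.
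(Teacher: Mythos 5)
Your argument follows the paper's Appendix~B proof almost verbatim: chain rule $\partial_{x_j}V = (2V)^{-1}\partial_{z_j}T(z)\sin(2x_j)$, Taylor expansion of $\sin(2x_j)$ and $z_j-z^*_j$ around $x^*_j$, and the sign dictionary coming from Lemma~\ref{lem.dtau=0}, with the degenerate Subcases 2-2-a and 2-2-c handled by pushing the expansion one (resp.\ two) orders further. This is the same route as the paper's.

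The two items you flag as unresolved in Case 2-2-c both close without extra work, so they are not genuine obstacles. For the positivity of $\partial^2_{z_3 z_3}\tau^-(1,1,\nu)$ when $\nu\in\{0,1\}$: the proof of Lemma~\ref{lem.dtau=0} B.2.b.(ii) shows that $\partial_{z_3}\xi>0$ everywhere on $\{z_1=z_2=1\}$, where $\xi := 4K_0\,\partial_{z_3}\tau^-\,\partial_{z_3}\tau^+$, so since $\xi(1,1,\nu)=0$ and all other factors are positive, $\partial^2_{z_3z_3}\tau^-(1,1,\nu) = \partial_{z_3}\xi(1,1,\nu)/\bigl(4K_0\,\partial_{z_3}\tau^+\bigr)(1,1,\nu) > 0$; no genericity is needed, the order of vanishing is exactly one. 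For the cross terms $\eta_3\eta_k^2$ ($k=1,2$): the quantity to compare against is not $C_3\eta_3^3$ but $\rmd_0\,|\nabla_x p_1|$ — the paper phrases exactly this bound as $O(|\nabla_x V(x)|\,\rmd_0(x,x^*))$. Since $|\nabla_x p_1|\ge C_k|\eta_k|$ and $|\eta_3\eta_k|\le \rmd_0^2$, one has $|\eta_3\eta_k^2|\le \rmd_0^2|\eta_k| = O\bigl(\rmd_0^2|\nabla_x p_1|\bigr)$, well within the allowed error. One small correction: your intermediate assertion $|R|^2 = O(\rmd_0^6)$ is not correct in general (the $j=1,2$ components of $R$ are only $O(\rmd_0^2)$), but it is also not needed once the component-wise bounds $|R_j| = O(\rmd_0\,|\nabla_x p_1|)$ are in hand.
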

 Proof in Appendix B.
\begin{lemma}
\label{lem.wdH}
 Consider Cases 3-2 or 3-3 (i.e.,  Assumption (A0) with $\beta_2\neq0$ and $x^*\in \calX_2^*$).
 We then have the following estimates.\\
 In Case 3-2,
\begin{equation}
\label{est1.xiw3_2}
  (\sqrt{\Psi_0(z)}_w) = C(x_1-x^*_1)(x_2-x^*_2) (1+ O(\rmd_0(x,x^*))),
\end{equation}
 and, in Case 3-3,
\begin{equation}
\label{est1.xiw3_3}
  (\sqrt{\Psi_0(z)}_w) = C(x_2-x^*_2)(1+ O(\rmd_0(x,x^*))),
\end{equation} 
 for some $C\neq 0$ as $x\to x^*$.
\end{lemma}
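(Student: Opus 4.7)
The plan is to reduce the statement to the elementary formula \refq{val.fw}, which gives
\[
\sqrt{\Psi_0(z)}_w(x) = \cos(x_1)\cos(x_2)\, \tilde{\sqrt{\Psi_0(z)}}_{\tilde w}(y),
\]
and then to Taylor expand the prefactor $\cos(x_1)\cos(x_2)$ near $x^*$ according to which of $z_1^*, z_2^*$ are equal to $1$.

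First, I would compute $\tilde f_{\tilde w}$ explicitly for $f = \sqrt{\Psi_0(z)}$. Since $\Psi_0(z) = \alpha_1 z_1 + \alpha_2 z_2 + \alpha_3 z_3$ with $z_j = y_j^2$ and $\tilde w = (y_1, y_2, 0)$, a one-line chain rule gives
\[
\tilde{\sqrt{\Psi_0(z)}}_{\tilde w}(y) \;=\; \frac{\alpha_1 z_1 + \alpha_2 z_2}{\sqrt{\Psi_0(z)}}.
\]
Because $x^* \in \calX_2^*$ forces $z_3^* = 0$ and $z_2^* = 1$, this evaluates at $y = y^*$ to $\sqrt{\alpha_1 z_1^* + \alpha_2}$, which is a strictly positive constant; hence $\tilde f_{\tilde w}$ is smooth and nonvanishing in a neighbourhood of $y^*$, contributing only a $1 + O(\rmd_0(x,x^*))$ factor.

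The remaining work is to expand $\cos(x_1)\cos(x_2)$, and this is where the two subcases diverge. In Case 3-2 one has $\beta_1 = \beta_2$, whence $z_1^* = z_2^* = 1$ and $\cos(x_j^*) = 0$ for $j = 1,2$. A first-order Taylor expansion at each coordinate gives $\cos(x_j) = -\sin(x_j^*)(x_j - x_j^*)(1+O((x_j-x_j^*)^2))$, and multiplying the two factors produces the advertised double zero together with $C = \sin(x_1^*)\sin(x_2^*)\sqrt{\alpha_1+\alpha_2} \neq 0$, proving \refq{est1.xiw3_2}. In Case 3-3 one has $z_1^* = \beta_2/\beta_1 \in (0,1)$, so $\cos(x_1^*) \neq 0$ and no vanishing comes from the $x_1$-factor, while $z_2^* = 1$ still forces $\cos(x_2^*) = 0$; the same Taylor step for $\cos(x_2)$ alone yields the single linear factor with $C = -\cos(x_1^*)\sin(x_2^*)\sqrt{\alpha_1 z_1^* + \alpha_2} \neq 0$, proving \refq{est1.xiw3_3}.

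There is essentially no obstacle here: the argument is a direct Taylor expansion, and the only thing to monitor is that $\Psi_0(z)$ stays bounded away from $0$ near $x^*$ (which is immediate from $\Psi_0(z^*) = \alpha_1 z_1^* + \alpha_2 > 0$), so that the denominator in $\tilde f_{\tilde w}$ is smooth. The only bookkeeping is to absorb the cubic remainders of $\cos$ and the smooth factor $\tilde f_{\tilde w}(y)/\tilde f_{\tilde w}(y^*)$ into a single $1 + O(\rmd_0(x,x^*))$ term using $|y - y^*| = O(\rmd_0(x,x^*))$.
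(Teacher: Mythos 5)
Your proof is correct and follows the same route as the paper's: you apply the factorization \refq{val.fw}, observe via \refq{val.Tw} that the factor $\tilde{\sqrt{\Psi_0(z)}}_{\tilde w}$ is smooth and strictly positive near $y^*$ (since $z^*_2=1$, $z^*_3=0$ force $\Psi_0(z^*)>0$), and then Taylor-expand $\cos(x_1)\cos(x_2)$ at $x^*$, distinguishing whether $z^*_1=1$ (Case 3-2, double zero) or $z^*_1\in(0,1)$ (Case 3-3, single zero). The paper's Appendix B argument is precisely this, with the same explicit expansions $\cos(x_j)=-y^*_j(x_j-x^*_j)+O((x_j-x^*_j)^3)$; you are a bit more explicit in computing the constant $C$ and in verifying the formula for $\tilde f_{\tilde w}$ from the chain rule, but the idea is identical.
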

 Proof in Appendix B.

\subsubsection{Partition of the set of thresholds}
\label{sec.taub}
 With the notations (the $s_j$'s notably) of Part \ref{sec.behave} we set
\textcolor{red}{ 
\begin{eqnarray*}
 \calT_{sa} &:=& \calT \sauf (\calT_{sm}\cup\{0\}),\\
 \calT_{sm} &:=& \calT_{sm}^+ \cup  -\calT_{sm}^+ ,
\end{eqnarray*}
 where, in Case $\bfbeta=0$,
\begin{eqnarray*}
 \calT_{sm}^+ &:=&  \{ \sqrt{\Psi_0(z^*)}; \; x^*\in \calX_2^* (=X_{\{0,1\}}^*) , \; s_1=s_2=s_3=\pm 1\},
\end{eqnarray*}
 and, in Case $\bfbeta=0$,
\begin{eqnarray*}
  \calT_{sm}^+ &:=&  \{ \sqrt{\tau^+(z^*)}, \sqrt{\tau^-(z^*)}; \; x^*\in \calX_1^*, \; s_1=s_2=s_3=\pm 1\}.
\end{eqnarray*}
 In fact we have,\\
 in Case $\bfbeta=0$:
\begin{eqnarray*}  
  \calT_{sm}^+ &=& \{\lambda_+=\lambda_-\},
\end{eqnarray*}
 in Case $\bfbeta\neq0$:
\begin{eqnarray*}
 \calT_{sm}^+  &=& \Big\{ \sqrt{\tau^+(1,1,1)}=\lambda_+, \sqrt{\tau^-(1,1,1)}=\lambda_-,
  \sqrt{\Psi_0(\frac{\beta_2}{\beta_1},1,0)}\Big\} \quad {\rm if } \quad \nu \le 0,\\
 \calT_{sm}^+ &=&  \Big\{ \sqrt{\tau^+(1,1,1)}=\lambda_+, \sqrt{\tau^-(1,1,1)}, \sqrt{\tau^-(1,1,0)},
 \sqrt{\Psi_0(\frac{\beta_2}{\beta_1},1,0)} \Big\},  \quad {\rm if } \quad \nu \in (0,1),\\
 \calT_{sm}^+  &=& \Big\{ \sqrt{\tau^+(1,1,1)}=\lambda_+,\sqrt{\tau^-(1,1,0)}=\lambda_-,
  \sqrt{\Psi_0(\frac{\beta_2}{\beta_1},1,0)}\Big\}  \quad  {\rm if } \quad \nu \ge 1 .
\end{eqnarray*}
\begin{remark}
 1) If $\bfbeta\neq 0$ then $\lambda_-<\lambda_+$. Actually, thanks to Lemma \ref{lem.dtau=0},
 if $\sqrt{\tau^-(z)}=\lambda_-$ then $z_1=z_2=1$ and $z_3\in \{0,1\}$. If $z_3=1$ then $K_0(z)\neq 0$ so 
 $\lambda_+=\sqrt{\tau^+(z)}>\sqrt{\tau^-(z)}=\lambda_-$ and if $z_3=0$ then
 $\lambda_+=\sqrt{\tau^+(1,1,1)}> \sqrt{\tau^+(1,1,0)}\ge \sqrt{\tau^-(z)}=\lambda_-$.\\
 2) If $\bfbeta\neq 0$ and $\nu\in (0,1)$ then $\lambda_-= \max \{\sqrt{\tau^-(1,1,1)}, \sqrt{\tau^-(1,1,0)}\}$,
 and each value $\sqrt{\tau^-(1,1,1)}$, $\sqrt{\tau^-(1,1,0)}$ is a local maximum of $\sqrt{}\circ \tau^- \circ \sin^2$.
\end{remark}
}

\subsubsection{New coordinate near an element of $\calX^*$}
\label{sec.p1}
 We give an approximation of a vector proportional to $\nabla _x V(x)$ (where $V$ is defined
 in Part \ref{sec.behave}) of the form $\nabla_x p_1$ near a point $x^*\in \calX_j^*$.
 We then give an approximation of a vector proportional to $w(x)$ near a point $x^*\in \calX_2^*$
 in Cases 3-2 and 3-3.

 With the notations of Lemma \ref{lem.dV}, in Cases 1, 2-1, 2-2-a, 2-2-b, 2-2-d, 2-2-e
 and 3-1, we set
%
$$  p_1(x;x^*) = \frac12 \sum_{j=1}^3 C_j s_j (x_j-x^*_j)^2; $$
%
 in Case 2-2-c, we set
%
$$  p_1(x;x^*) = \frac12 \sum_{j=1}^2 C_j(x_j-x^*_j)^2  - \frac14 C_3 (x_3-x^*_3)^4. $$
%
 Then, Relations\refq{prop1.dV} and\refq{prop2.dV} of Lemma \ref{lem.dV} can be written
%
$$  \rmd V(x) = (1+O(\rmd_0(x,x^*))) \rmd p_1(x;x^*), \quad x\to x^*. $$
%

\subsubsection{The conjugated operator near thresholds}
\label{ssec.near}
 Let $x^*\in \calX^*$. For simplicity we then write $p_1(x;x^*)=p_1(x)$.
 For $u\in \calD_0$ and $x\in \T^3\sauf\{x^*\}$ we set,\\
 - in Case 1 ($\bfbeta=0$, $x^*\in \calX_2^*$):
%
 $$
  \calA_{x^*} u\: (x) := 
   i \chi_{x^*}(x) \pi_2(y) \frac{\nabla_x p_1(x)}  {\nabla_x p_1(x) \cdot \nabla_x \sqrt{\Psi_0(z)}}
   \nabla_x (\chi_{x^*}(x) \pi_2(y) u(x)),
$$
%
 - in Case 2 ($\bfbeta\neq 0$, $x^*\in \calX_1^{*\pm}$) and Case 3-1) ($\bfbeta\neq 0$, $\beta_2=0$,
 $x^*\in \calX_2^*$): 
%
$$
 \calA_{x^*}^\pm u\: (x) := 
   i \chi_{x^*}(x) \pi_1^\pm(y) \frac{\nabla_x p_1(x)}
  {\nabla_x p_1(x) \cdot \nabla_x \sqrt{\tau^\pm(z)}} \nabla_x (\chi_{x^*}(x) \pi_1^\pm(y) u(x)),
$$
%
 and $\calA_{x^*}:= \calA_{x^*}^+ +\calA_{x^*}^-$,

 - in Cases 3-2 and 3-3 ($\beta_2\neq0$, $x^*\in \calX_2^*$):
%
$$  \calA_{x^*} u\: (x) := i  \chi_{x^*}(x) \pi_2(y) (\sqrt{\Psi_0(z)}_w)^{-1} (\chi_{x^*}(x)\pi_2(y) u(x))_w . $$
%
 In each case we symmetrize $\calA_{x^*}$ and $\calA_{x^*}^\pm$ by setting
$$  A_{x^*} :=\calA_{x^*} +  \calA_{x^*}\!^* , \quad A_{x^*}^\pm :=\calA_{x^*}^\pm +  (\calA_{x^*}^\pm)^*,$$
 where $\calA_{x^*}\!^*$ (respect., $(\calA_{x^*}^\pm)^*$) denotes the formal adjoint to $\calA_{x^*}$
 (respect., to $\calA_{x^*}^\pm$). It is defined on $\calD_0$ too.\\

 We set
\begin{eqnarray*}
\calT_{in} &:=& (\calT\sauf \calT')\cap (0,+\infty),\\
  \calX_{1,in}^{*\pm} &:=& \{x\in \calX^{*\pm}_1; \sqrt{\tau^\pm(z)} \in \calT_{in}\} ,\\
  \calX_{2,in}^{*} &:=& \{x\in \calX_2^*; \sqrt{\Psi_0(z)} \in \calT_{in}\}.
\end{eqnarray*}
 (We have, in Case 1, $\calX_{1,in}^{*\pm} =\emptyset$.)
 We set
$$
 A_{in}:=\sum_{x^*\in  \calX_{2,in}^* } A_{x^*} + \sum_\pm \sum_{x^*\in  \calX_{1,in}^{*\pm} } A_{x^*}^\pm.
$$
 Then the operator $A_{in}$ with domain $\calD_0$ is symmetric, closable
 and densely defined on $\calHD$.

 We set, as quadratic forms defined on $\calD_0$,
$$ H_{1,x^*} := [H^D,iA_{x^*}] , \quad H_{1,in} := [H^D,iA_{in}] .$$
 By a straight calculation as in Part \ref{sec.Hout} we obtain
\begin{lemma}
\label{val.H1x*}
 We have for $x\neq x^*$, in Cases 1 and 2:
%
$$   \frac12 H_{1,x^*}(x)  = (\chi_{x^*}(x))^2 \pi_2(y), $$
%
 and, in Cases 3-2 and 3-3,
$$
   \frac12 H_{1,x^*}(x) = (\chi_{x^*}(x))^2 \pi_2(y)
    + (\chi_{x^*}(x))^2 \pi_2(y) (\sqrt{\Psi_0(z)}_w)^{-1}\; \xi_w(x)\pi_2(y).
$$
%
\end{lemma}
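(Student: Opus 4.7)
The plan is to compute the single commutator $[H^D, i\calA_{x^*}]$ (respectively $[H^D, i\calA_{x^*}^\pm]$ in Case 2) explicitly as a matrix-valued multiplication operator on $\calD_0$, and then to recover $H_{1,x^*}$ by the general identity $([H^D,i\calA_{x^*}])^* = [H^D,i\calA_{x^*}\!^*]$. Together with $A_{x^*} = \calA_{x^*} + \calA_{x^*}\!^*$ this gives
$$ H_{1,x^*} = [H^D, i\calA_{x^*}] + ([H^D, i\calA_{x^*}])^*,$$
so, provided the single commutator is already self-adjoint as a multiplication operator, $\tfrac12 H_{1,x^*}$ is exactly that commutator.

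For Cases~1 and 2, I would use the structural relations $\pi^2 = \pi$ and $h^D\pi = \pi h^D = V\pi$, where $(\pi,V) \in \{(\pi_2,\sqrt{\Psi_0(z)}),(\pi_1^\pm,\sqrt{\tau^\pm(z)})\}$. Writing
$$ i\calA_{x^*}^{(\pm)} u = -\chi_{x^*}\pi\,\frac{\nabla_x p_1}{\nabla_x p_1\cdot\nabla_x V}\cdot\nabla_x(\chi_{x^*}\pi u), $$
one computes $H^D\circ i\calA_{x^*}^{(\pm)} u$ (where $h^D$ slides past $\pi$ to give $V$) and $i\calA_{x^*}^{(\pm)}\circ H^D u$ (where the identity $\pi h^D u = V\pi u$ turns $\chi_{x^*}\pi h^D u$ into $\chi_{x^*}V\pi u$ inside the gradient), and takes the difference. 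The Leibniz expansion $\nabla_x(\chi_{x^*}V\pi u) = V\nabla_x(\chi_{x^*}\pi u) + \chi_{x^*}(\nabla_x V)\pi u$ splits into two pieces: the first reproduces, with the opposite sign, the term coming from $H^D\circ i\calA_{x^*}^{(\pm)}$ and cancels, while the second, contracted with $\nabla_x p_1/(\nabla_x p_1\cdot\nabla_x V)$, collapses to $\chi_{x^*}\pi u$ by the very definition of the coefficient. Applying the outer $\chi_{x^*}\pi$ and using $\pi^2=\pi$ leaves $\chi_{x^*}^2\pi$. In Case~2, summing over $\pm$ produces $\pi_1^+ + \pi_1^- = \pi_2$, which yields the stated formula after symmetrization.

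For Cases~3-2 and 3-3, $\pi_2$ has rank two and $h^D\pi_2 = V\pi_2 + \xi$ with $\xi$ vanishing only on $\calX_2$. Repeating the commutator computation with the vector field $w$ in place of $\nabla_x$ and $T_w := (\sqrt{\Psi_0(z)})_w$, and using Leibniz plus the auxiliary identities $\pi_2\xi = \xi\pi_2 = \xi$ (derived from $h^D\pi_2 = \pi_2 h^D$) and $\pi_2(\pi_2)_w\pi_2 = 0$ (obtained by $w$-differentiating $\pi_2^2 = \pi_2$ and sandwiching between $\pi_2$'s), one obtains the key algebraic simplification
$$ \pi_2\xi_w - \xi(\pi_2)_w = \pi_2\xi_w\pi_2. $$
This delivers the main term $\chi_{x^*}^2\pi_2$ together with the correction $\chi_{x^*}^2\pi_2 T_w^{-1}\xi_w\pi_2$, which is precisely the local version of the computation already carried out in Part~\ref{sec.Hout} leading to \refq{valfin3.HDAb}.

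The main obstacle will be the bookkeeping in the Leibniz expansion and the verification that $[H^D,i\calA_{x^*}]$ is self-adjoint as a multiplication operator, so that symmetrization merely doubles it. Self-adjointness of $\chi_{x^*}^2\pi$ is immediate; for the correction term in Cases~3-2/3-3, it follows from $\xi^* = \xi$ (since $\pi_2^* = \pi_2$, $(h^D)^* = h^D$ and the two commute), hence $\xi_w^* = \xi_w$, and conjugation by $\pi_2$ on both sides preserves self-adjointness. Once these verifications are in place, the lemma follows for all $x\ne x^*$.
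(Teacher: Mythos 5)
Your proposal is correct and takes essentially the same approach as the paper: the paper dispatches this lemma with the remark ``by a straight calculation as in Part~\ref{sec.Hout},'' and your computation is precisely that calculation carried out locally, using $\pi h^D=h^D\pi$, $\pi^2=\pi$, the Leibniz rule, and (in Cases 3-2/3-3) the identities $\pi_2\xi=\xi\pi_2=\xi$ to reduce $\pi_2\xi_w-\xi(\pi_2)_w$ to $\pi_2\xi_w\pi_2$, recovering exactly the structure behind \refq{valfin3.HDAb}. Your observation that $[H^D,i\calA_{x^*}]$ is itself a self-adjoint multiplication operator (so that symmetrization merely doubles it) is also the content implicit in the paper's passage from $\calA_{out}$ to $A_{out}=\calA_{out}+\calA_{out}^*$.
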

 Lemma \ref{val.H1x*} shows  that the quadratic forms $H_{1,x^*}$ and $H_{1,in}$
 extend continuously as bounded quadratic forms on $\calHD$ which are associated with bounded
 self-adjoint operators, as multiplication operators by smooth real symmetric coefficients, denoted,
 respectively, $H_{1,x^*}$ and $H_{1,in}$. 
 In addition, these coefficients (as functions of $x$) are commuting with $\pi_2(y)$.
 Then, an obvious iteration shows that  $H^D\in \calC^\infty(A_{x^*})$ for all $x^*\in \calX^*$.
 Since  $x\neq x'$ implies $\supp \chi_{x}\cap\supp \chi_{x'}=\emptyset$ then $H^D\in \calC^\infty(A_{in})$. 

 We set
%
$$ A_{\phi} : = A_{out}+A_{in}. $$
%
 The argumentation to prove the property $H^D\in \calC^\infty(A_{out})$ at Part \ref{ssc.smooth1}
 still holds with $A_{out}$ replaced by $A_{\phi}$, so we obtain
\begin{lemma}
\label{lem3.Hsmooth}
 The quadratic form $H_{1,\phi}:= [H^D,iA_{\phi}]$ defined on $\calD_0$ defines a bounded
 self-adjoint multiplication operator on $\calHD$.
 In addition, $H^D \in \calC^\infty(A_\phi)$.
\end{lemma}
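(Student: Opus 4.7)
The plan is to reduce everything to the summand-level smoothness already established. For the first claim, decompose
$$
H_{1,\phi}=[H^D,iA_{out}]+\sum_{x^*\in\calX^{*}_{2,in}}[H^D,iA_{x^*}]+\sum_\pm\sum_{x^*\in\calX_{1,in}^{*\pm}}[H^D,iA_{x^*}^\pm].
$$
The first summand is a bounded self-adjoint multiplication operator with smooth coefficients by~\refq{valfin12.HDAb}/\refq{valfin3.HDAb} and the discussion in Part~\ref{ssc.smooth1}; each of the remaining (finitely many) summands is of the same type by Lemma~\ref{val.H1x*}. Hence $H_{1,\phi}$ itself defines a bounded self-adjoint multiplication operator on $\calHD$.

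For $H^D\in\calC^\infty(A_\phi)$ I would argue by induction on $k\ge 1$ that $ad^k_{A_\phi}(H^D)$ extends to a bounded multiplication operator on $\calHD$ whose matrix coefficient is a finite sum of products of the form (smooth scalar)$\cdot P(y)$, with $P(y)$ a polynomial in $\pi_2(y)$ and in $\pi_1^\pm(y)$ (the latter entering only on $\sin(\calX_1)$, where they are analytic). The base case $k=1$ is the first claim. In the inductive step, $ad^{k+1}_{A_\phi}(H^D)$ is expanded as the sum of brackets of the previous multiplication operator with $iA_{out}$ and with each $iA_{x^*}^{\pm}$; a priori each is a differential operator of order $\le 1$, and the task is to show that the first-order part vanishes.

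The diagonal contributions ($A_{out}$ with $A_{out}$, and $A_{x^*}^{\pm}$ with itself) are already controlled by $H^D\in\calC^\infty(A_{out})$ (Part~\ref{ssc.smooth1}) and $H^D\in\calC^\infty(A_{x^*}^{\pm})$ (Part~\ref{ssec.near}). Cross contributions involving two distinct points $x^*\neq x'^*$ vanish identically because $\supp\chi_{x^*}\cap\supp\chi_{x'^*}=\emptyset$ for $b$ small enough. The only genuinely new contributions are mixed brackets between $A_{out}$ and $A_{x^*}^{\pm}$, and these are handled exactly as the problematic bracket in Part~\ref{ssc.smooth1}: the multiplication coefficients produced by the induction commute with the leading coefficient of $\calA_{x^*}^{\pm}$ thanks to the algebraic identities $(\pi_1^\pm)^2=\pi_1^\pm$, $\pi_1^+\pi_1^-=0$, $\pi_1^\pm h^D=h^D\pi_1^\pm$ on $\sin(\calX_1)$, and $\pi_2 h^D=h^D\pi_2$ on $\sin(\calX_2)$, since in both cases the first-order term carries a $\pi_2$ or $\pi_1^\pm$ factor on each side of the derivative.

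The main obstacle is the case-by-case bookkeeping across the subcases 1, 2-1, 2-2-a--e, 3-1, 3-2, 3-3 enumerated in Part~\ref{sec.behave}: the precise form of $\calA_{x^*}^{\pm}$, and in particular the rational factors $\nabla_x p_1/(\nabla_x p_1\cdot\nabla_x V)$ and $(\sqrt{\Psi_0(z)}_w)^{-1}$, must be checked in each subcase to remain smooth on $\supp\chi_{x^*}$ after being multiplied by the appropriate projector, so that all intermediate coefficients retain the induction-hypothesis form. In every subcase, however, the first-order coefficient of $\calA_{x^*}^{\pm}$ carries the required two-sided sandwich by the corresponding projector, which makes the mixed brackets vanish, closes the induction, and yields $H^D\in\calC^\infty(A_\phi)$.
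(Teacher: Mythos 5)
Your proof takes essentially the same approach as the paper's: the paper establishes the first claim through the explicit multiplication formulas for $H_{1,out}$ and the $H_{1,x^*}$ (Lemma~\ref{val.H1x*} and display\refq{valfin3.HDAb}), and then disposes of $H^D\in\calC^\infty(A_\phi)$ by remarking that the iteration argument of Part~\ref{ssc.smooth1} carries over with $A_{out}$ replaced by $A_\phi$. One small caution on wording: the $\pi_2\xi_w\pi_2$ coefficient of $H_{1,\phi}$ does \emph{not} commute with $\pi_1^\pm$, so the first-order parts of the mixed brackets are not all killed by algebraic commutation alone --- exactly as for the ``problematic bracket'' you cite, the mechanism there is support disjointness ($\chi_1\chi_2=0$, plus $\supp\chi_{x^*}\subset\supp\chi_1$ for $x^*\in\calX_1^*$ and $\supp\chi_{x^*}\subset\supp\chi_2$ for $x^*\in\calX_2^*$), while commutation only covers the brackets whose $\pm$-summed coefficient carries a full $\pi_2$ factor (e.g.\ the $\chi_3^*$ piece of $\calA_{out}$).
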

\subsection{"Punctual" Mourre's estimate}
 Let us prove that
\begin{equation}
 \phi(H^D)(x)H_{1,\phi}(x)\phi(H^D)(x)  \ge  C \phi^2(H^D)(x) ,
\label{min.phiH1phi}
\end{equation}  
 for all $x\in\T^3$, where $C>0$ does not depend on $x$ but on $\phi$ only.

 We consider the case $\beta_2\neq0$ (under assumption (A0)) only. 
 The other case $\beta_2\neq0$ is more simple and omitted.
 As in Part \ref{sec.Hout} (see\refq{valfin3.HDAb}) the calculation of $H_{1,\phi}$ yields
\begin{eqnarray*} 
 \frac12 H_{1,\phi}(x) & = & \sum_\pm (\chi^{*\pm}_1(x))^2 \pi_1^\pm(y) 
  + ((\chi^*_2(x))^2 + (\chi^*_3(x))^2) \pi_2(y)  \\ 
 &&  + (\chi^*_2(x))^2  \pi_2(y) T_w(x)^{-1}\; \xi_w(x)\pi_2(y) \\ 
 && + \sum_\pm \sum_{x^*\in \calX_{1,in}^{*\pm} \cup   \calX_{1,in}^{*-}} (\chi_{x^*}(x))^2  \pi_1^\pm(y) 
 + \sum_{x^*\in \calX_{2,in}^{*} }  (\chi_{x^*}(x))^2  \pi_2(y)  \\
 && + \sum_{x^*\in \calX_{2,in}^{*}} (\chi_{x^*}(x))^2  \pi_2(y) (\sqrt{\Psi_0(z)}_w)^{-1}\; \xi_w(x)\pi_2(y) 
  \quad x\not\in\calX^*\cup \calX_0.
\end{eqnarray*}
 Thus, as for Inequality\refq{min3.H1out}, we get,
\begin{eqnarray}
\nonumber 
 H_{1,\phi}(x) & \ge & \sum_\pm (\chi^{*\pm}_1(x))^2 \pi_1^\pm(y)  + ((\chi^*_2(x))^2 + (\chi^*_3(x))^2) \pi_2(y)  \\
\nonumber 
 && + \sum_\pm \sum_{x^*\in \calX_{1,in}^{*\pm} \cup   \calX_{1,in}^{*-}} (\chi_{x^*}(x))^2  \pi_1^\pm(y) 
 + \sum_{x^*\in \calX_{2,in}^{*} }  (\chi_{x^*}(x))^2  \pi_2(y) .\\
&& 
\label{min3.H1phi}
\end{eqnarray}  
 Let us fix $x\in \supp\phi(H^D(x))$. Thus $\chi_0(x)=1$. We consider the following cases.\\
\begin{enumerate}
\item
Case $\rmd_0(x,\calX^*)\ge b$. Then, $x\not\in \supp \chi_{x^*}$ for any $x^*\in \calX^*$,
 and $\chi^*(x)= \chi^{*\pm}(x)=1$. Hence\refq{min3.H1phi} becomes
\begin{eqnarray*} 
 H_{1,\phi}(x) & \ge & \sum_\pm \chi_1^2(x) \pi_1^\pm(y)  + (\chi_2^2(x) + \chi_3^2(x)) \pi_2(y) 
 =\sum_{j=1}^3\chi_j^2(x) \pi_2(y) \\
 & \ge & \delta_0 \pi_2(y)  ,
\end{eqnarray*}
 where $\delta_0:=\min_{\T^3}\sum_{j=1} \chi_j^2 >0$.
 Since $\phi(H^D(x))\pi_2(x)=\phi(H^D(x))$, then\refq{min.phiH1phi} holds.

\item
 Case $\rmd_0(x,\calX^*)< b$. Then there exists exactly one $x^*\in\calX^*$
 such that $x\in \supp \chi_{x^*}$ and $x\not\in \supp \chi_{x'}$ if $x'\in \calX^*\sauf\{x^*\}$.
 We set 
$$ \delta(x^*):=\min_{\{\chi_0=1\}}(1-\chi_{x^*})^2 + (\chi_{x^*})^2>0 .$$
 If $x^*\not\in \calX^*_{2,in}\cup\calX^{*+}_{1,in}\cup\calX^{*-}_{1,in}$ then $x\not\in\sup\phi(H^D)(x)$
 so\refq{min.phiH1phi} is trivial. We thus assume $x^*\in \calX^*_{2,in}\cup\calX^{*+}_{1,in}\cup\calX^{*-}_{1,in}$.
\begin{enumerate}
\item
 Case $x^*\in \calX^*_{2,in}$. Thus $x\not\in\supp \chi_1\cup\supp\chi_3$ and
$$
 \chi^*_2(x) = (1-\chi_{x^*}(x)) \chi_2(x)=(1-\chi_{x^*}(x)) .
$$
 Hence\refq{min3.H1phi} becomes
$$ H_{1,\phi}(x)  \ge (\chi^*_2(x))^2 \pi_2(y) + (\chi_{x^*}(x))^2 \pi_2(y)   \ge \delta(x^*) . $$
%
 Thus\refq{min.phiH1phi} holds.

\item Case $x^*\in \calX^{*+}_{1,in}$ (which is included in $\calX^{*-}_{1,in}$
 and does not intersect $\calX^*_{2,in}$).
 Thus $x\not\in\supp \chi_2\cup\supp\chi_3$ and
$$
 \chi^{*\pm}_1(x) = (1-\chi_{x^*}(x)) \chi_1(x)= 1-\chi_{x^*}(x) .
$$
 Hence\refq{min3.H1phi} becomes
\begin{eqnarray*}
 H_{1,\phi}(x) & \ge & \sum_\pm ((1-\chi_{x^*}(x))^2 + (\chi_{x^*}(x))^2 ) \pi_1^\pm(y) \\
 &&  = ((1-\chi_{x^*}(x))^2  + (\chi_{x^*}(x))^2) \pi_2(y)\\
 & \ge & \delta(x^*) \pi_2(y).
\end{eqnarray*}
 Thus\refq{min.phiH1phi} holds.
 
\item Case $x^*\in \calX^{*-}_{1,in}\sauf\calX^{*+}_{1,in}$ (which does not intersect $\calX^*_{2,in}$).
 Thus $x\not\in\supp \chi_2\cup\supp\chi_3$ and
$$
 \chi^{*-}_1(x) = (1-\chi_{x^*}(x)) \chi_1(x)= 1-\chi_{x^*}(x) ,\quad
 \chi^{*+}_1(x)=0
 $$
 Hence\refq{min3.H1phi} becomes
\begin{eqnarray*}
 H_{1,\phi}(x) & \ge & \sum_\pm ((1-\chi_{x^*}(x))^2  + (\chi_{x^*}(x))^2) \pi_1^-(y) \\
 & \ge & \delta(x^*) \pi_1^-(y) .
\end{eqnarray*}
 But we have also
$$ \phi(H^D)(x)=\phi(\sqrt{\tau^-(x)})\pi_1^-(y). $$
 Thus\refq{min.phiH1phi} holds.
\end{enumerate}
\end{enumerate}
 As conclusion,\refq{min.phiH1phi} is proved with $C=\min(\delta_0,\min_{\calX^*}\delta(x^*))$.
 \qed

\subsection{Self-adjointness and maximal monotonicity of parts of the conjugated operator}
\label{sec.selfa}
 The conjugate operator $A_\phi$ with domain $\calD_0$ is a symmetric first order differential operator
 in $x$ whose coefficients belong to $\calC^\infty(\T^3\sauf\calX^*;\calL(\C^6))$.
 As we already saw, $A_{out}$ with domain $\calC^\infty(\T^3)$ is essentially self-adjoint
 and a self-adjoint extension is $\ove{A_{out}}=A_{out}$ with domain
 $D(\ove{A_{out}}) =\{u\in \calHD$; $A_{out}u\in \calHD\}$.
 (We may observe that $D(\ove{A_{out}})$ is also the closure of $\calC^\infty(\T^3)$ under the norm
 $\|u\|+\|A_{out} u\|$.)
 Let us check that for all $x^*\in \calX^*$ the operator $A_{x^*}$ (with the same domain $\calD_0$)
 is essentially self-adjoint or, at least, admits a maximal symmetric extension.
\begin{lemma}
\label{lem1.Aadjoint}
 Remembering the notations of Section \ref{sec.behave} we then claim:
\begin{itemize}
\item[A)] Cases 1 and 2-1 and 2-2-a and 2-2-b and 3-1.
 If $\{s_1,s_2,s_3\}=\{-1,1\}$, then $A_{x^*}$ is essentially self-adjoint on $\calH^D$.
 Otherwise, i.e., if all the $s_j$'s have the same sign, then $A_{x^*}$
 admits a maximal symmetric extension on $\calH^D$.
\item [B)] Case 2-2-c.  The operator $A_{x^*}$ is essentially self-adjoint on $\calH^D$.
\item [C)] Cases 3-2 and 3-3 (so we have (A0) with $\beta_2\in (0,\beta_1]$, $x^*\in \calX_2^*$,
 $|y^*_2|=1$).
 The operator $A_{x^*}$ admits a maximal symmetric extension on $\calH^D$.
\end{itemize}  
\end{lemma}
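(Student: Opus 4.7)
The plan is to localize each $A_{x^*}$ near $x^*$ using the cutoff $\chi_{x^*}$ and, via a change of coordinates adapted to the local geometry of the relevant eigenvalue function, reduce the self-adjointness question to that of the model operator $i\partial_s$ on either $L^2(\R)$ (essentially self-adjoint, deficiency indices $(0,0)$) or $L^2((0,+\infty))$ (maximal symmetric, deficiency indices $(0,1)$ or $(1,0)$). Since the eigenprojectors $\pi_2$, $\pi_1^\pm$ are smooth on a small neighborhood of $x^*$ (by Theorem \ref{th.spectreH} and the subsequent discussion), their presence does not affect the deficiency-index analysis; they simply make the model operator act fiberwise in a finite-dimensional bundle.

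\medskip

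For Cases~A and~B, the vector field $X:=\nabla_x p_1/(\nabla_x p_1\cdot \nabla_x V)$ satisfies $X\cdot\nabla_x V =1$ by construction. I would introduce local smooth coordinates $(s,y_1,y_2)$ on a neighborhood of $x^*$ with $s=V-V(x^*)$ and $(y_1,y_2)$ parameterizing the level sets $\{V=\text{const}\}$; in these coordinates $X\cdot\nabla = \partial_s + R$, with $R$ tangent to the level sets. Symmetrizing $\calA_{x^*}$ and commuting the smooth projector $\pi$ through derivatives yields a model of the form $i\partial_s + (\text{smooth bounded})$ acting on $L^2(J;\calF)$, where $\calF$ is the finite-dimensional fiber and $J$ is the image of $V-V(x^*)$ near $x^*$. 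By Lemma~\ref{lem.dV}, $J$ contains an open interval around $0$ when $p_1$ is indefinite, which happens precisely when $\{s_1,s_2,s_3\}=\{-1,+1\}$ in Case~A, and always in Case~B (thanks to the quartic term in $p_1$, which gives $p_1<0$ along the $t_3$-axis and $p_1>0$ in the $(t_1,t_2)$-plane); in those situations, the model $i\partial_s$ on $L^2(\R)$ is essentially self-adjoint. When all $s_j$ have the same sign in Case~A, $J\subset [0,+\infty)$ or $J\subset(-\infty,0]$, and the model $i\partial_s$ on $L^2$ of a half-line admits only a maximal symmetric extension.

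\medskip

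For Case~C, I would use the flow $\Phi_\tau$ of the tangent field $w$ to $\calX_2$. By Lemma~\ref{lem.wdH}, the function $(\sqrt{\Psi_0(z)})_w$ vanishes at $x^*$ with the factorizations\refq{est1.xiw3_2} (Case~3-2) and\refq{est1.xiw3_3} (Case~3-3), so along the integral curves of $w$ through or near $x^*$ the function $\sqrt{\Psi_0}$ has a critical point. Introducing the coordinate $\sigma = \sqrt{\Psi_0}-\sqrt{\Psi_0(x^*)}$ along these curves, the operator $i\chi_{x^*}\pi_2(\sqrt{\Psi_0(z)}_w)^{-1}\partial_w$ transforms into $i\partial_\sigma$. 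Since $\sqrt{\Psi_0}$ attains a local extremum along the relevant $w$-curves, $\sigma$ ranges only on a half-interval $[0,\epsl)$ or $(-\epsl,0]$ near $0$, so the model $i\partial_\sigma$ on $L^2$ of a half-line has deficiency indices $(0,1)$ or $(1,0)$, hence admits a maximal symmetric extension but no self-adjoint extension.

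\medskip

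The main obstacle will be the rigorous justification of the reduction: one must check that the Jacobian factor arising from the change of measure, the commutator terms with the smooth projector $\pi$, and the tangential remainder $R$ of $X$ all act as bounded, or at worst as relatively bounded with relative bound $0$, perturbations of the model operator, so that the deficiency indices are preserved. This follows because on $\supp\chi_{x^*}$ all these additional terms have smooth bounded coefficients while the singular behavior is entirely concentrated in the $\partial_s$-direction of the model; the deficiency-index computations for $i\partial_s$ on $\R$ and on $[0,+\infty)$ are then the classical ones (cf.\ Reed--Simon Vol.~II, \S X.1).
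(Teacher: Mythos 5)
Your overall strategy --- conjugate $A_{x^*}$ to a one-dimensional model $i\partial_s$ on a full line (essentially self-adjoint) or a half-line (deficiency indices $(0,N)$ or $(N,0)$), the distinction being governed by whether $x^*$ is a saddle or an extremum of the relevant eigenvalue function --- is exactly the paper's strategy in Appendix~B. The sign dichotomy in Case~A, the indefiniteness of $p_1$ in Case~B, and the one-sidedness of $\sqrt{\Psi_0}$ along the $w$-flow through $x^*$ in Case~C are all the correct geometric inputs.

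The gap is in your final "main obstacle" paragraph, and it is not a bookkeeping detail. You take $s=V-V(x^*)$ with level-set coordinates $(y_1,y_2)$ of $V$ and assert that the tangential remainder $R = X\cdot\nabla - \partial_s$ is "bounded, or at worst relatively bounded with relative bound $0$." But $X$ is a scalar multiple of $\nabla_x p_1$, \emph{not} of $\nabla_x V$, and the two gradients agree only to leading order as $x\to x^*$ (Lemma~\ref{lem.dV}, where the $O(\rmd_0)$ error differs component-wise); this leaves a genuine tangential component of $X$, so $R$ is a first-order differential operator in $(y_1,y_2)$ --- unbounded on $L^2$, not $\partial_s$-bounded, and a priori capable of altering deficiency indices. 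The paper avoids the issue entirely by constructing coordinates $(p_1,p_2,p_3)$ with $\nabla_x p_1\cdot\nabla_x p_j=0$ for $j=2,3$, so that $\nabla p_1\cdot\nabla / |\nabla p_1|^2=\partial_{p_1}$ holds \emph{exactly} with no tangential remainder; the residual scalar factor $|\nabla p_1|^2/(\nabla p_1\cdot\nabla V)=1+O(\rmd_0)$ is then absorbed by a Lipschitz reparametrization (the $k^\pm$ weights). Building and verifying such orthogonal coordinates is where the actual work lies: the complex-square-root map $p_1+ip_2=(\rho'+ix_3')^2$ in the indefinite quadratic case, spherical coordinates in the definite case, and --- in the degenerate quartic Case~2-2-c --- the non-Morse exponential system $p_2=x_2'e^{-1/(2x_3'^2)}$, $p_3=x_1'e^{-1/(2x_3'^2)}$, for which your Morse-type heuristic ("indefinite, hence full line") does not itself supply the diffeomorphism, since the critical point is degenerate and the Morse lemma is unavailable. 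Likewise, in Cases~3-2/3-3 the paper passes through $q_1=p_1|p_1|$ to obtain a single-valued coordinate exposing the half-line model with a $\sgn(q_1)$ twist (your $\sigma=\sqrt{\Psi_0}-\sqrt{\Psi_0(x^*)}$ is two-to-one along the $w$-flow, precisely because $\sqrt{\Psi_0}$ has an extremum there). Each map must be proved a diffeomorphism on a punctured or slit domain with controllable Jacobian, and the presence of the cutoff $\tilde\chi$ makes the deficiency-index computation itself nontrivial (the paper identifies $D((\tilde A_{x^*})^*)$ and solves $(\tilde A_{x^*})^*v=iv$ directly); none of this is resolved by the concluding sentence of your sketch.
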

 Proof in Appendix B.
\begin{remark}
\label{rem.Ain-sa}
 When $A_{x^*}$ is essentially self-adjoint then the set $\calD_0$ is not dense
 in the domain of the self-adjoint extension $\ove{A_{x^*}}$ of $A_{x^*}$. 
 (The simple reason is that $\calC^\infty_c(\R^*)$ is not dense in $H^1(\R)$.)
\end{remark}
 We set
$$ 
 \calX^*_{sa}:=\{x^*\in \calX^*;\; \mbox{$A_{x^*}$ is essentially self-adjoint} \} ,
 \quad \calX^*_{sm} := \calX^* \sauf \calX^*_{sa},
$$
 and
$$ A_{sa}:=\sum_{x^*\in \calX^*_{sa}} A_{x^*}, \: D(A_{sa}):=\calD_0 .$$
\begin{coro}
\label{coro.Aphi-sa}
 1) The operator $A_{sa}$ is essentially self-adjoint on $\calHD$.\\
 2) If $\calT_{sm}\subset \calT'$ then the operator $A_\phi$ defined on $\calD_0$
  is essentially self-adjoint on $\calHD$.
\end{coro}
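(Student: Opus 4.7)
The plan is to combine Lemma \ref{lem1.Aadjoint} (which provides local essential self-adjointness of each $A_{x^*}$ with $x^*\in \calX^*_{sa}$) with two structural observations. First, for $b>0$ small enough the supports of the cut-offs $\chi_{x^*}$, hence the supports of the coefficients of the various $A_{x^*}$, are pairwise disjoint because $\calX^*$ is finite. Second, $A_{out}$ is itself essentially self-adjoint on the larger domain $\calC^\infty(\T^3)$, as a smooth symmetric first-order differential operator on the compact manifold $\T^3$; this was already noted at the end of Section \ref{ssec.out}.

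For Part 1, the pairwise disjointness of supports makes $A_{sa}=\sum_{x^*\in \calX^*_{sa}}A_{x^*}$ decouple as a direct sum over $x^*\in \calX^*_{sa}$ plus a zero piece on the complement. Given $u\in D(A_{sa}^*)$ with $A_{sa}^*u=\pm iu$, I would localize $u$ by a smooth cut-off $\zeta_{x^*}$ equal to $1$ in a neighborhood of $x^*$ and supported in $\{\chi_{x^*}>0\}$. Since $A_{sa}$ is first-order, the commutator $[A_{sa}^*,\zeta_{x^*}]$ is a bounded multiplication operator supported in an annulus around $x^*$, so $\zeta_{x^*}u$ satisfies an approximate eigenvalue equation for $A_{x^*}^*$. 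A standard bootstrapping argument, together with the essential self-adjointness of $A_{x^*}$ given by Lemma \ref{lem1.Aadjoint}, then forces $\zeta_{x^*}u=0$. Outside $\bigcup_{x^*}\supp\chi_{x^*}$ the operator $A_{sa}$ acts as zero, so the equation reduces to $\pm iu=0$ and $u=0$ there too.

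For Part 2, the hypothesis $\calT_{sm}\subset\calT'$ yields $\calT_{in}\cap \calT_{sm}=\emptyset$, so by construction every $x^*\in \calX^*_{1,in}\cup \calX^*_{2,in}$ contributing to $A_{in}$ already belongs to $\calX^*_{sa}$. Consequently $A_{in}$ consists only of essentially self-adjoint local pieces and is itself essentially self-adjoint by the argument of Part 1. Writing $A_\phi=A_{out}+A_{in}$, the same localization strategy applies: for $u\in D(A_\phi^*)$ with $A_\phi^*u=\pm iu$, localize near each $x^*\in\calX^*$ by $\zeta_{x^*}$ and argue, as in Part 1, that $\zeta_{x^*}u=0$; on the complement of these neighborhoods $A_\phi$ reduces to $A_{out}$, and the essential self-adjointness of $A_{out}$ then yields $u=0$ globally.

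The main obstacle I anticipate is the interaction between $A_{out}$ and $A_{x^*}$ in the annular region $\{b/2<\rmd_0(x,x^*)<b\}$, where both operators contribute to $A_\phi$. One has to show that the smooth, bounded-coefficient first-order operator $A_{out}$ does not alter the deficiency-index analysis established for $A_{x^*}$ alone in Lemma \ref{lem1.Aadjoint}. Since the rational singularities of $A_{x^*}$ at $x^*$ dominate the principal behaviour there while the coefficients of $A_{out}$ are smooth, this should follow by treating $A_{out}$ near $x^*$ as a symmetric first-order perturbation with smooth coefficients and reworking the ODE argument of Lemma \ref{lem1.Aadjoint} with this additional term; alternatively, one invokes the Kato--Rellich-type stability of essential self-adjointness under such lower-order symmetric perturbations on bounded neighborhoods.
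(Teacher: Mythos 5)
Your two structural observations are correct and match the paper: the supports of the $\chi_{x^*}$ are pairwise disjoint for $b$ small, and the hypothesis $\calT_{sm}\subset\calT'$ forces every $x^*$ contributing to $A_{in}$ to lie in $\calX^*_{sa}$ (so every local piece of $A_\phi$ is essentially self-adjoint). The paper disposes of Part 1 in a single sentence by exactly the direct-sum observation you make, and then, for Part 2, it also uses a partition-of-unity localization. So the overall architecture is the same.

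However, the key intermediate claim in your localization step --- that the approximate eigenvalue equation satisfied by $\zeta_{x^*}u$, together with essential self-adjointness of $A_{x^*}$, \emph{forces $\zeta_{x^*}u=0$} --- is wrong. If $A_\phi^*u=iu$ and $\zeta_{x^*}$ is a smooth cut-off, then $(A_{x^*}^*-i)(\zeta_{x^*}u)=[A_{x^*},\zeta_{x^*}]u=:g$ where $g$ is a bounded multiplication of $u$ supported in the annulus where $\nabla\zeta_{x^*}\neq 0$; this $g$ is not zero in general. Since $\bar A_{x^*}$ is self-adjoint, $(\bar A_{x^*}-i)$ is boundedly invertible and therefore $\zeta_{x^*}u=(\bar A_{x^*}-i)^{-1}g$, which yields $\zeta_{x^*}u\in D(\bar A_{x^*})$ but certainly not $\zeta_{x^*}u=0$. (Compare $A=-i\,\rmd/\rmd x$ on $\R$: if $u'=u$ and $\zeta$ is a cut-off, then $\zeta u$ solves an inhomogeneous equation with nonzero right-hand side and is nonzero.) The correct conclusion of the localization is only membership in the domain of the closure: one shows $\zeta_{x^*}u\in D(\bar A_{x^*})$ for each $x^*$, that the remaining piece is in $D(\bar A_{out})$, sums these to get $u\in D(\bar A_\phi)$, and \emph{only then} uses symmetry of $\bar A_\phi$ and $\bar A_\phi u=iu$ to conclude $u=0$. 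This is precisely what the paper does (it works with $D(\bar A^*)\subset D(\bar A)$ rather than deficiency vectors, but it is the same content); your route can be repaired along these lines, but as written the chain of implications is broken.

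On the annular-interaction concern you raise: it is real if one uses cut-offs whose support extends into the region $b/2<\rmd_0(x,x^*)<b$ where both $\chi_{x^*}$ and $\chi^*$ are nonzero, but it can be sidestepped. The coefficients of $A_{out}$ vanish identically on $\{\rmd_0(x,\calX^*)<b/2\}$ because $\chi^*$ (and hence the $\chi^{*\pm}_1,\chi^*_2,\chi^*_3$) vanish there. The paper exploits this by choosing the cut-off $\varphi_1$ equal to $1$ near $\calX^*_{sa}$ and supported in this core region, so that $\varphi_1 A_\phi\varphi_1=\varphi_1 A_{sa}\varphi_1$ exactly, with no residual smooth perturbation at all. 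Your Kato--Rellich/stability argument is therefore not needed if you shrink the cut-off accordingly, and I would recommend doing so rather than re-running the deficiency-index computation with an extra smooth term.
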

\begin{proof}
1) The operator $A_{sa}$ is the finite sum of essentially self-adjoint operators $A_{x^*}$
 defined on $\calD_0$ and with disjoint supports so $A_{sa}$ is essentially self-adjoint too.\\
 2) For simplicity we assume that $\calT'=\calT_{sm}$ and we consider the case $\beta_2\neq 0$ only.
 The operator $\bar A:=A_\phi$ with domain $D(\bar A):=\{u\in \calHD$; $A_\phi u\in \calHD\}$
 is a symmetric extension of $A_\phi$. Let us prove that it is self-adjoint.
 Let $v\in D(\bar A \,^*)$ so
$$ |(\bar A u,v)_{\calHD}|\le C\|u\| \quad \forall u\in D(\bar A). $$
 Let $u\in D(\bar A)$. Let $\varphi_1\in\calC^\infty(\T^3\sauf\calX^*_{sm};[0,1])$
 be such that $\supp\varphi_1$ is a small neighbourhood of $\calX^*_{sa}$ and
 $\varphi_1=1$ near $\calX^*_{sa}$.
 Setting $B:=\varphi_1 A_\phi  \varphi_1$, since $\nabla\varphi_1$ vanishes near
 $\calX^*$ then $B - A_\phi \varphi_1^2$ is bounded on $\calHD$, 
 $\varphi_1^2 u\in D(\bar A)$ and we get
$$  |(Bu,v)_{\calHD} | \le |(\bar A (\varphi_1^2 u),v)_{\calHD} | + C'\|u\|  \le  C''\|u\|. $$
 In addition,  we have $B=\varphi_1 A_{sa} \varphi_1$ since $\bar A$ coincides with $A_{sa}$
 in $\supp\varphi_1$, so $B$ is essentially self-adjoint (the proof is similar to those of $A_{sa}$).
 Hence, $Bv \in \calHD$ and then $\varphi_1^2 v\in D(\bar A)$. 
 Let $\varphi_2 \in\calC^\infty_c(\T^3\sauf \calX^*; [0,1])$.
 Then, $\varphi_2 A_\phi \varphi_2 - A_\phi\varphi_2^2$ is bounded on $\calHD$,
 $\varphi_2^2 u \in D(\bar A)$ and
$$ |(\varphi_2 \bar A (\varphi_2 u),v)_{\calHD} |   \le  |(A (\varphi_2^2 u), v)_{\calHD} |+C'\|u\|  \le  C''\|u\|. $$
 Since $\varphi_2 A_\phi \varphi_2$ is a symmetric first order differential operator with smooth coefficients
 it is so essentially self-adjoint and we get $\varphi_2 A_\phi \varphi_2 v\in \calHD$, and
 $\varphi_2^2 v\in D(\bar A)$.
 Letting $\varphi_1$ such that its derivatives at any order vanish on $\varphi_1^{-1}(\{1\})$
 we can choose $\varphi_2:=\sqrt{1-\varphi_1^2}$. Then $v=\sum_{j=1}^2\varphi_j^2 v\in D(\bar A)$.
\end{proof}
\section{Proofs of the main results}
\label{sec.6}
\subsection{Proof of Theorem \ref{th.1}}
 Clearly, it is not restrictive to consider that $\supp\phi\subset(0,+\infty)$ so
 $\phi\in\calC^\infty_c((0,+\infty)\sauf\calT')$.
 We then construct the operators $A_{out}$, $A_{in}$, $A_\phi$ as above.
 Thanks to Lemma \ref{lem3.Hsmooth}, the operator $A_{\phi}$ satisfies Point (ii). 
 Point (i) is a straight consequence of\refq{min.phiH1phi}.

 Proof of Point (iii). We consider the cases $\supp\phi\subset(0,+\infty)$ and $\beta_2\neq 0$
 only. We have $\supp A_\phi\subset \calX_A$ where we set
$$
\calX_A:=\supp \chi_1^{*+}\cup \supp \chi_1^{*-} \cup \supp \chi_2^* \cup \supp \chi_3^*
\cup_{x^*\in \calX_{1,in}^{*+}\cup \calX_{1,in}^{*-} \cup\calX_{2,in}^*} \supp \chi_{x^*}  . 
$$
 The set $\calK:=\cup_\pm\sqrt{\tau^\pm(\sin^2(\calX_A))}$ is then a compact subset of $(0,\infty)\sauf\calT'$.
 Thus there exists $\tilde\phi\in \C^\infty_c((0,\infty)\sauf\calT')$ with $\tilde\phi=1$ in $\calK$. 
 Thus if $x\in\supp A_\phi\cap \calX_1$ then 
$$ \tilde\phi(H^D)(x)= \sum_\pm\tilde\phi(\sqrt{\tau^\pm(z})) \pi_1^\pm(y)= \pi_2(y),$$
 and, if $x\in\supp A_\phi\cap \calX_2$, then 
$$ \tilde\phi(H^D)(x)= \tilde\phi(\sqrt{\Psi_0(z})) \pi_2(y)= \pi_2(y).$$
 Hence, $\tilde\phi(H^D)(x)=\pi_2(y)$ on $\supp A_\phi$.
 In addition, $A_\phi(x)$ is obviously commuting with $\pi_2(y)$ for all $x$.
 It shows that Point (iii) holds.

 Point (iv) is Point 2) of Corollary \ref{coro.Aphi-sa}.
 
 Point (v) is the consequence of Corollary \ref{coro.Aphi-sa}.

\subsection{Adaptation of the theory of Georgescu and alii}
\label{sec.ms-adapt}
 Notation: if $Q$ is a bounded quadratic form on $\calH$ we denote by $Q^\circ$ the bounded operator
 associated with $Q$.
 Let us consider the case $\calT'=\{0\}$ so $A_{in}$ may not be essentially self-adjoint.
 We set
$$  \calX^*_{sm1} := \{x^*\in  \calX^*_{sm}; \: A_{x^*}\mbox{ has default index $(N^+,N^-=0)$}\}, $$
$$  \calX^*_{sm2} := \{x^*\in  \calX^*_{sm}; \: A_{x^*}\mbox{ has default index $(N^+=0,N^-)$}\}. $$
 We write
$$ A_\phi = A_0+A_1+A_2 $$
 where all the $A_j$ are differential operators of first order defined at least on $\calD_0$ by:
 $A_0=A_{out}$, $A_1=\sum_{x^*\in \calX^*_{sm1}\cup  \calX^*_{sa}}A_{x^*}$,
 $A_2=\sum_{x^*\in \calX^*_{sm2}}A_{x^*}$.
 The proof of Corollary \ref{coro.Aphi-sa} shows that the operator $A_0$ is essentially self-adjoint.
\begin{remark} 
 We could have set more naturally $A_0=A_{sa}+A_{out}$, $A_1=\sum_{x^*\in \calX^*_{sm1}}$
 and $A_2$ unchanged. In such a choice some coefficients of $A_0$ have a rational singularity
 on $\calX^*_{sa}$.
\end{remark}
 Since the supports of the $A_{x^*}$, $x^*\in \calX^*$, are two-by-two disjoint then the operators $\pm A_0$
 and $(-1)^j A_j$, $j=1,2$, admit a maximal symmetric extension with deficiency index of the form $(N,0)$.
 We denote by $A_j^{sm}$ with domain $D(A_j^{sm})$ the maximal symmetric extension of $A_j$
 (with domain $\calD_0$).\\
 Let us show that we can modify the main hypotheses (M1)-(M5) of \cite[Theorem 3.3]{GEO.COM}
 and extend the statement of \cite[Theorem 3.3]{GEO.COM} to our situation.
 We consider variables $z\in\rho(H^D)$ and $\epsl$ real with $0<|\epsl|<\epsl_0$
 and $\Im m(z)\epsl\ge 0$.
 We set $H_\epsl := H^D-i\epsl H'$. (Thus $H_\epsl ^*=H_{-\epsl}$.)
 Then, the resolvent $R_\epsl(z):= (H_\epsl-z)^{-1}$ is well-defined if $\epsl_0$
  is sufficiently small, see \cite[Proposition 3.11]{GEO.COM}.
  Actually we make the following observations:
\begin{itemize}
\item The domain $\calD_0$ of $A_\phi$ is dense in $\calHD$.
\item Assumption \cite[(M3)]{GEO.COM} becomes:\\
 (M3*): [ $\pm A_0^{sm}$ (respect., $(-1)^j A_j^{sm}$) is the generator of a $C_0$-group $(W^{(0)}_t)_{t\in\R}$
 (respect., semigroup $(W^{(j)}_t)_{t\ge 0}$) in $\calHD$.]\\
 Clearly, Condition (M3*) is satisfied.
\item
 Setting $<H>:=(1+H^2)^{1/2}$, Assumption \cite[(M2)]{GEO.COM} becomes:\\
 (M2*): {[} a bounded open set $J\subset \R$ is given and there are numbers $a>0$, $b\ge 0$,
 such that $H' \ge (a1_J(H^D)-b1_{\R\sauf J})<H^D>$ as forms on $\calHD$.]\\
 Thus, for all bounded open set $J\subset\subset (0,+\infty)$, Condition (M2*) is satisfied
 (by choosing $\phi$ such that $\phi=1$ on $J$, and with $b=0$),
 thanks to Mourre's Inequality\refq{min.phiH1phi}.
\item
 Assumption \cite[(M4)]{GEO.COM} becomes:\\
 (M4*): {[} There is $H'_j\in B(\calHD)$ such that the limits
\begin{eqnarray*}
 \lim_{t\to 0^+} & (-1)^j t^{-1} \{(H^D u,W^{(j)}_tu)_{\calHD}-(u,W^{(j)}_t H^D u)_{\calHD}\} \quad (j\neq 0),\\
 \lim_{t\to 0} & t^{-1} \{(H^D u,W^{(0)}_tu)_{\calHD}-(u,W^{(0)}_t H^D u)_{\calHD}\} \quad (j=0)
\end{eqnarray*}
 exist and are respectively equal to $(u,H'_ju)$ for all $u\in \calHD$.]\\ 
 Clearly, Condition (M4*) is satisfied.
 We have $[H^D,iA_j]^\circ=H'_j$ and $[H^D,iA_\phi]^\circ=H' :=\sum_{j=0}^2 H'_j\in B(\calHD)$,
 and we can write $H^D\in C^1(A_j^{sm})$, $H^D\in C^1(\bar A_\phi)$.
\item The proofs of \cite[Lemmas 3.13 and 3.14]{GEO.COM} with Conditions (M3*) and (M4*) satisfied
 imply the following relations:
\begin{eqnarray*}
 {[}R_\epsl(z),iA_j^{sm}]^\circ &=& R_\epsl(z)(iH'_j+\epsl H''_j)R_\epsl(z) \quad j=0,1,2, \\
 {[}R_\epsl(z),iA_\phi]^\circ &=& R_\epsl(z)(iH'+\epsl H'')R_\epsl(z),\\
  \frac{\rmd R_\epsl(z)}{d\epsl} &=&  [R_\epsl(z),iA_\phi]^\circ - \epsl R_\epsl(z)H'' R_\epsl(z).
\end{eqnarray*}
 In particular the map $\epsl\mapsto R_\epsl(z) \in B(\calHD)$ is $C^1$ in norm on $]0,1]$.
\item
 Since $H^D$ and the $H'_j$'s are symmetric bounded self-adjoint operators on $\calHD$
 (so $H'_j$ is regular; see also \cite[Remark 2.15]{GEO.COM}), 
 then Assumption \cite[(M1)]{GEO.COM} becomes:\\
 (M1*): {[} for all $j$,  $H^D\in C^1(H'_j)$.]\\
 We see that (M1*) is obviously satisfied and $H^D\in C^1(H')$.
\item
 Assumption \cite[(M5)]{GEO.COM} becomes:\\
 (M5*): {[} for all $j=0,1,2$, there is $H''_j\in B(\calHD)$ such that the limits
\begin{eqnarray*}
 \lim_{t\to 0^+} & (-1)^j t^{-1} \{(H'u,W^{(j)}_tu)-(u,W^{(j)}_t H'u)\} \quad j\neq 0,\\
 \lim_{t\to 0} & t^{-1} \{(H'u,W^{(0)}_tu)-(u,W^{(0)}_t H'u)\} \quad(j= 0),
\end{eqnarray*}
 exist and are respectively equal to $(u,H''_ju)$ for all $u\in \calHD$.]\\
 Thanks to \cite[Remark 3.1]{GEO.COM}, Condition (M5*) is satisfied since it follows from
 the following facts:
 $H^D\in C^1(A_j^{sm})$ with $[H^D,iA_j^{sm}]^\circ = H'_j$, $H'\in C^1(A_j^{sm})$ with
 $[H',iA_j^{sm}]^\circ = H''_j$, so we can write $H^D\in C^2(\bar A_\phi)$.
\end{itemize}
 Then the proof of \cite[Theorem 3.3]{GEO.COM}  implies the result of Corollary \ref{coro.3}. \qed

\section*{Conclusion}
 The results of this work, notably the LAP outside thresholds, is the first step to futur
 developments as: 
\begin{itemize}
\item Extension of the result of Isozaki and Morioka \cite{ISO.REL} on
 Rellich type theorem for discrete Schrodinger operators to the case of
 discrete Maxwell operators.
\item The LAP for perturbed discrete Maxwell operator of the form $\hat H^D=\hat D \hat H_0$
 where $\hat D$ is not constant but depends on $n\in \Z^3$. 
\item Conditions of radiation for perturbed discrete Maxwell operators.
 Actually, let $\hat f$ be in a suitable subspace of $L^2(\Z^3)$, particularly the space of sequences
 with compact support.
 We have to characterize $\hat u^\pm(n)$ for $|n|$ large where $\hat u^\pm:=(\hat H^D-\lambda\pm i0)^{-1}\hat f$.
\item Extension of the result of Isozaki and Jensen \cite{ISO.CON} on
 the continuum limit for lattice Schr\"odinger operators to the case of discrete Maxwell operators.
\item Extension of the result of Isozaki and  \cite{ISO.INV} on the inverse scattering
 for lattice Schr\"odinger operators to the case of discrete Maxwell operators.
\end{itemize}
%

\section{Appendix A}
\label{sec.AppendixA}
\subsection{Proof of Lemma \ref{lem.valp}}
 We have
$$
 \hat D \hat H_0(x) -\lambda= \left( \begin{array}{cc} -\lambda & \epsl M \\ -\mu M & -\lambda \end{array} \right).
$$
 Thus,
$$ \det(\hat D \hat H_0(x) -\lambda) = \det(\lambda^2 + \epsl M(y) \mu M(y)) =: p(z;\lambda).$$
 We have
\begin{eqnarray*}
 \epsl M \mu M &=& \left( \begin{array}{ccc}
 0 & -\epsl_1 y_3  & \epsl_1 y_2\\
 \epsl_2 y_3  & 0 &  - \epsl_2 y_1\\
 -\epsl_3 y_2  &   \epsl_3 y_1 & 0 \end{array} \right) 
  \left( \begin{array}{ccc}
 0 & -\mu_1 y_3  & \mu_1 y_2\\ 
 \mu_2 y_3  & 0 &  - \mu_2 y_1\\
 -\mu_3 y_2  &  \mu_3 y_1 & 0
\end{array} \right)  \\
 &=& 
\left( \begin{array}{ccc}
 -\epsl_1 \mu_3 y_2^2  -\epsl_1 \mu_2 y_3^2 & \epsl_1 \mu_3 y_1y_2 &
 \epsl_1 \mu_2  y_1y_3 \\
  \epsl_2 \mu_3 y_1y_2 &   -\epsl_2 \mu_3 y_1^2 -\epsl_2 \mu_1 y_3^2 &
  \epsl_2 \mu_1 y_2y_3\\
  \epsl_3 \mu_2 y_1y_3 &  \epsl_3 \mu_1 y_2y_3 &
  -\epsl_3 \mu_2 y_1^2 -\epsl_3 \mu_1 y_2^2 
\end{array} \right).
\end{eqnarray*}
 Then, for $t=-\lambda^2\in\C$,
\begin{eqnarray*}
 \det (\epsl M \mu M - t) &=& -t^3 - t^2\{(
 (\epsl_2 \mu_3 + \epsl_3 \mu_2) y_1^2 +
 (\epsl_1 \mu_3 +\epsl_3 \mu_1 )y_2^2 +
 (\epsl_1 \mu_2 + \epsl_2 \mu_1)y_3^2\} \\
 && - t \{
  \epsl_2 \epsl_3  \mu_2\mu_3 y_1^4 
 + \epsl_1 \epsl_3  \mu_1\mu_3 y_2^4
 + \epsl_1 \epsl_2  \mu_1\mu_2 y_3^4
 + (\epsl_2 \epsl_3 \mu_1\mu_3 + \epsl_1 \epsl_3 \mu_2\mu_3)y_1^2y_2^2
\\
&&
 + (\epsl_2 \epsl_3 \mu_1\mu_2 + \epsl_1 \epsl_2 \mu_2\mu_3)y_1^2y_3^2
 + (\epsl_1 \epsl_3 \mu_1\mu_2 +  \epsl_1\epsl_2 \mu_1\mu_3)y_2^2y_3^2
 \}\\
 &\equiv & -t^3 - 2t^2 \Psi_0 -t \Phi_0,
\end{eqnarray*}
 where $\Psi_0(z)$ is defined by\refq{def.Psi0} 
%
$$
 \Phi_0 :=  \epsl_2 \epsl_3  \mu_2\mu_3 z_1^2  + (\epsl_2 \epsl_3 \mu_1\mu_3
   + \epsl_1 \epsl_3 \mu_2\mu_3)z_1 z_2 + c.p. .
$$
 We easily observe that the following relations hold (also, $+ c.p.$):
\begin{eqnarray}
\label{r.alphagamma}
 \alpha_1^2-\gamma_1 &=& \frac14 \beta_1^2,\\
\label{r.alphabeta}
 \epsl_1\mu_1 \alpha_1 - \alpha_2\alpha_3 &=& \frac14\beta_2\beta_3,\\
\label{r.betai}
 \alpha_3\beta_2 + \alpha_2 \beta_3 &=& -\epsl_1\mu_1 \beta_1,
\end{eqnarray}
 where $\gamma$ is defined by\refq{def.gamma} and $\bfbeta$ by\refq{def.beta}.
 Thanks to\refq{r.alphagamma},\refq{r.alphabeta},\refq{r.betai}, we compute:
\begin{eqnarray*}
 \Psi_0^2-\Phi_0 &=&
 (\alpha_1 z_1 + \alpha_2 z_2 + \alpha_3 z_3)^2/4 - \{  \gamma_1 z_1^2  + \gamma_2 z_2^2 \\
 &&  + \gamma_3 z_3^2
  + 2\epsl_3 \mu_3 \alpha_3 z_1z_2 + 2\epsl_1 \mu_1 \alpha_1 z_2z_3
  + 2\epsl_2 \mu_2 \alpha_2 z_1z_3\}/4 \\
  &=& K_0(z),
\end{eqnarray*}
 where $K_0$ is defined by\refq{def.K0}.
 Hence the eigenvalues of $\epsl M(y) \mu M(y)$ are 0 and
$$ t=k^2 = \Psi_0(z) \pm \sqrt{K_0(z)}.$$
 Then Relation\refq{v.taupm} follows.

\begin{remark}
 From\refq{def.K0} we obtain the relations
\begin{eqnarray}
\label{val1.K0}
 K_0(z)  &=& \frac14(\beta_1 z_1- \beta_2 z_2 - \beta_3 z_3)^2 - \beta_2\beta_3 z_2z_3,\\
\nonumber
 K_0(z)  &=& \frac14(\beta_3 z_3- \beta_1 z_1 - \beta_2 z_2)^2 - \beta_1\beta_2 z_1z_2.
\end{eqnarray}
\end{remark}

\subsection{Proof of Proposition \ref{p.HD-spectre}}
 Similarly to\refq{val1.sigma} we have
$$ \sigma(h^D) = \ove{\cup_{y\in [-1,1]^3} \sigma(h^D(y))}.$$
 In addition, since $\tau^+$ and $\tau^-$ are continuous with $\tau^\pm(0)=0$
 and $\tau^+\ge\tau^-\ge 0$ in $[0,1]^3$, then
$$
 \cup_{y\in [-1,1]^3}\sigma(h^D(y)) = \cup_{\pm} \{\pm \tau^+(z); \; z\in [0,1]^3\} 
= [-\textcolor{red}{\lambda_+},\textcolor{red}{\lambda_+}].
$$
 The conclusion follows.
\subsection{Proof of Lemma \ref{lem.dtau=0}}
 We set $z'_i= \beta_i z_i$ (so $z'_1,z'_2\ge 0$ and $z'_3\le 0$).
 Remember that
$$
 (4\alpha_1^2-\beta_1^2) = (2\alpha_1-\beta_1) (2\alpha_1+\beta_1)
=  4\epsl_3\mu_2\epsl_2\mu_3 = 4\gamma_1 >0.
$$ 
\begin{itemize}
\item [A)] (Case $\bfbeta=0$). This point is obvious.
\item [B)] (Case $\bfbeta\neq0$.) Thanks to\refq{val1.K0} we have
\begin{eqnarray}
\label{val.dz1K0}
  \frac{\partial}{\partial {z_1}} K_0 &=&  \frac12 \beta_1 (z'_1 -  z'_2 - z'_3), \\
\nonumber  \frac{\partial}{\partial {z_3}} K_0 &=& \frac12 \beta_3 (z'_3 -  z'_1 - z'_2),
\end{eqnarray}
 and
\begin{equation}
\label{ine1.K0}
  \sqrt{K_0(z)} \ge  \frac12 |z'_1 -  z'_2 - z'_3|.
\end{equation}
 \begin{itemize}
 \item[1)] We have 
$$  \sqrt{K_0} \nabla_z\tau^\pm = \sqrt{K_0} \nabla_z\Psi_0 - \frac12\nabla_z K_0, $$
 so, by using\refq{val.dz1K0},\refq{ine1.K0},
\begin{eqnarray*}
 2\sqrt{K_0(z)}\partial_{z_1}\tau^\pm(z) &=& 2\sqrt{K_0(z)} (\alpha_1 - \frac12 |\beta_1|) 
 +  |\beta_1| \sqrt{K_0(z)} \pm \frac12\beta_1 |z'_1- z'_2 - z'_3| \\
 &\ge& 2\sqrt{K_0(z)} (\alpha_1 - \frac12 |\beta_1|) > 0.
\end{eqnarray*}
 Hence $\partial_{z_1} \tau^\pm(z)> 0$. Similarly, $\partial_{z_2}\tau^\pm(z) >0$.\\
 We have
$$
 2\sqrt{K_0} \partial_{z_3} \tau^\pm(z) =
 2\sqrt{K_0} \alpha_3  \pm \frac12 \beta_3 (z'_3- z'_1 - z'_2).
$$
 Since $\beta_3<0$ and $z'_3- z'_1 - z'_2\le 0$ then $\partial_{z_3} \tau^+(z)\ge 0$.
 Moreover if $\partial_{z_3} \tau^+(z)= 0$ then $\sqrt{K_0(z)}=0$ which is forbidden. 
 Hence $\partial_{z_3} \tau^+(z)> 0$.
 \item[2)]
\begin{itemize}
\item [a)] (Case $\beta_2=0$.) Thanks to\refq{val1.K0}, we have
$$
 2\sqrt{K_0} \partial_{z_3} \tau^-(z) = 
  \alpha_3 (z'_1- z'_3)   - \frac12 \beta_3 (z'_3- z'_1)
  = 2\sqrt{K_0} (\alpha_3- \frac12 \beta_3)>0.
$$
\item [b)] (Case $\beta_2>0$.)
 \begin{itemize}
\item [(i)] 
  Assume $z_1=0$ or $z_2=0$.
 Then $\sqrt{K_0(z)}= \frac12 (z'_1 + z'_2 -z'_3)$ and
$$ 2\sqrt{K_0} \partial_{z_3} \tau^-(z) = (\alpha_3 - \frac12|\beta_3| ) (z'_1 + z'_2 -z'_3)>0,$$
 so, $\partial_{z_3} \tau^-(z) >0$.
\item [(ii)] 
 Assume $z_1=z_2=1$.
 The functions $\xi:= 4K_0(z) \partial_{z_3}\tau^-(z) \partial_{z_3}\tau^+(z)$ and
 $\partial_{z_3}\tau^-(z)$ have the same sign outside $K_0^{-1}(\{0\})$. We have
\begin{eqnarray*}
 \xi  &=& 4\alpha_3^2 K_0(z) - (\partial_{z_3} K_0(z))^2 \\
 &=&  4\alpha_3^2 (\frac14(\beta_1 + \beta_2 - z'_3)^2 - \beta_1 \beta_2)
  - \frac14 \beta_3^2(\beta_1 + \beta_2 - z'_3)^2 \\
 &=& \gamma_3 (\beta_1 + \beta_2 - z'_3)^2  - 4 \alpha_3^2 \beta_1 \beta_2.
\end{eqnarray*}
 Thus, if $\gamma_3=0$ then $\xi <0$ and if $\gamma_3\neq 0$ then
\begin{eqnarray*}
 \xi=0  &\egal&  \sqrt{\gamma_3}(\beta_1 + \beta_2 - \beta_3 z_3)
  = 2\alpha_3 \sqrt{\beta_1\beta_2}\\
 &\egal&  z_3 = \nu.
\end{eqnarray*}
 Moreover, we have
$$ \partial_{z_3}\xi=0 = -2 \sqrt{\gamma_3}\beta_3(\beta_1 + \beta_2 - \beta_3 z_3) >0. $$
 The conclusion follows. 
  \end{itemize}
 \end{itemize}
\end{itemize}
\end{itemize}

\subsection{Proof of Lemma \ref{lem.seuils}}
 Lemma \ref{lem.seuils} is a straightforward consequence of Lemma \ref{lem.dtau=0} and
 of the following observations.
 
 1) Case $\bfbeta=0$. We have $K_0\equiv 0$ so $\calX_1=\emptyset$, $\calT_1=\emptyset$,
 $\calX_2=\T^3\sauf\{0\}$.
 In addition we have $\partial_{x_i}\Psi_0= 2\alpha_i \sin x_i\cos x_i$.
 Hence $\nabla_x \Psi_0(z)$ vanishes iff $z\in \{0,1\}^3$.
 Hence, noting that $\tau^\pm=\Psi_0$, we obtain $Z_2^*=Z_{\{0,1\}}^*$ and $\calT_2=\Psi_0(Z_{\{0,1\}}^*)$.

 2) Case $\bfbeta\neq0$ (so (A0) holds). 
 Thanks to Lemma \ref{lem.K0=0} we have 
$$
   \sin^2(\calX^*_1) \subset  [0,1]^3\sauf \{(\frac{\beta_2}{\beta_1}t, t,0), t\in [0,1] \} .
$$
 For $t\in [0,1]$ we have $(\beta_2 t/\beta_1, t,0) \in \{0,1\}^3 \sauf\{0_{\R^3}\}$ iff  $t=1$
 and $\beta_2=\beta_1$, or $t=1$ and $\beta_2=0$.
 The characterization of $ \sin^2(\calX^*_1)$ follows, then those of $\calX_1^*$ and of $\calT_1$.
 Let us determine $\calX_2^*$. 
 We look for a tangent vector field to $\calX_2$.
 A point $x\in\T^3$ belongs to $\calX_2$ iff $z\neq 0$ and $z_3=0=\beta_1 z_1 - \beta_2 z_2=0$.
 The last relation can be written
%
$$ \beta_1 y_1^2 = \beta_2 y_2^2, $$
%
 and $y_2\neq 0$. (If $y_2=0$ then $y_1=0$ so $z=0$ which is forbidden.)
 Then a tangent field to $\calX_2$ (respect., to $\sin(\calX_2)$) is then given by
 the vector field
\begin{equation}
\label{def.w0}
 w_0(x) := (\sin(x_1)\cos(x_2),\cos(x_1)\sin(x_2),0),
\end{equation}
(respect.,
$$ \tilde w_0(y) := (y_1,y_2,0) $$
).
\begin{remark}
 In Case 3-1) (i.e, $\beta_2=0$) it is equivalent but more simple to set $w_0(x) = (0,1,0)$.
 However our choice in\refq{def.w0} is general.
\end{remark}
 We then observe that $|w(x)|\neq 0$ for all $x\in \calX_2\sauf\calX^*$,
 and $|w(x)|\neq 0$ for all $x\in \calX_2$ if $\beta_2\in [0,\beta_1)$.
 If $\beta_2=\beta_1$ then $w(x)$ vanishes at all $x^*\in \calX_2^*$ since $z^*_1=z^*_2=1$.
 The determination of $\sin^2(\calX^*_2)$ follows, then those of $\calX_2^*$ and of $\calT_2$.

\subsection{Proof of the statement of Remark \ref{rem.nu}}
 Step 1. We prove the following assertion.
 Let $\bfbeta\in\R^3$ be a real vector and $\epsl_1$, $\mu_1$, $\alpha_3$ be three positive real
 values such that $\beta_1\ge \beta_2>0>\beta_3$ and $\alpha_3>|\beta_3|/2$.
 Then there exists positive values $\epsl_j$, $\mu_j$, $j=2,3$, such that
 $\bfbeta=\bfepsl\times\bfmu$ and $2\alpha_3=\epsl_1\mu_2+\epsl_1\mu_2$.

 Proof. We set successively
\begin{eqnarray*}
 \delta^{\pm} &:=& \alpha_3 \pm \frac12 \beta_3 >0,\\
 \epsl_2 &: =& \frac{\delta^{-}}{\mu_1} >0,\\
 \mu_2 &: =& \frac{\delta^{+}}{\epsl_1}>0 ,\\
 \epsl_3 &: =& \frac{\epsl_1\beta_1 + \epsl_2\beta_2 }{-\beta_3}>0 ,\\
 \mu_3 &: =& \frac{\mu_1\beta_1 + \mu_2\beta_2 }{-\beta_3}>0 .
\end{eqnarray*}
 A direct calculation provides $\bfepsl\times\bfmu=\bfbeta$ and $\epsl_1\mu_2+\epsl_2\mu_1=2\alpha_3$. \qed

 Step 2.
 Let $\bfbeta\in\R^3$ be such that $\beta_1\ge \beta_2>0>\beta_3$.
 Let us consider the function $(|\beta_3|/2,+\infty) \ni \alpha_3\mapsto \nu(\alpha_3)=\nu$
 defined by\refq{def.nu} and set
$$ F(r): = \frac{2r}{\sqrt{r^2-\frac14\beta_3^2}} - 2 \quad {\rm for} \: r>|\beta_3|/2,$$
$$
 \nu^*(\bfbeta) := - \frac{(\sqrt{\beta_1} -\sqrt{\beta_2})^2}{|\beta_3|}.
$$
 Then we have
$$
 \nu(\alpha_3) = \tilde\nu \egal  F(\alpha_3) =
  \frac{ |\beta_3| }{\sqrt{\beta_1\beta_2}} (\tilde\nu -\nu^*(\bfbeta) ).
$$
 Obviously, the function $F$ realizes a decreasing bijection from $(|\beta_3|/2,+\infty)$ into $(0,+\infty)$.
 Thus if $\nu^*(\bfbeta)<\tilde\nu$ then there exists a unique value $\alpha_3>|\beta_3|/2$ such
 that $\nu(\alpha_3)=\tilde\nu$. But the condition $\nu^*(\bfbeta)<\tilde\nu$ is easily satisfied since
 $\nu^*(\bfbeta)\to -\infty$ as $\beta_3\to 0^-$ if $\beta_1>\beta_2$. The conclusion follows.
 
\section{Appendix B}
\label{sec.AppendixB}
\subsection{Proof of Lemma \ref{lem.dV}}
 Firstly we observe that if $z^*_j\in \{0,1\}$ then
\begin{eqnarray}
\nonumber
 z_j-z^*_j &=& \sin^2 x_j-\sin^2 x^*_j = \sin(2x^*_j)(x_j-x^*_j) + 2\cos(2x^*_j) (x_j-x^*_j)^2 \\
\nonumber
  && + O( (x_j-x^*_j)^3)\\
\label{val1.z-z*}
 &=& 2s^*_j (x_j-x^*_j)^2 + O( (x_j-x^*_j)^4),
\end{eqnarray}
 and if $z_j\not\in \{0,1\}$ then
\begin{equation}
\label{val0.z-z*}
 z_j-z^*_j = \sin(2x^*_j)(x_j-x^*_j) + O( (x_j-x^*_j)^2) ,
\end{equation}
 with $\sin(2x^*_j)\neq 0$.

 Case 1. Assume $\bfbeta=0$ so $V(x)=\sqrt{\Psi_0(z)}$.
 We have $\partial_{z_j}V(x^*)=\frac{\alpha_j}{2\sqrt{\Psi_0(z^*)}}>0$ for $j=1,2,3$ so, by using\refq{val1.z-z*},
\begin{eqnarray*}
 \partial_{x_j}V(x) &=& \partial_{z_j}V(x) \partial_{x_j}z_j = \partial_{z_j}V(x) \sin(2x_j) \\
 &=& (\partial_{z_j}V(x^*) + O(z-z^*)) (\sin(2x_j^*) + 2\cos(2x_j^*)(x_j-x^*_j)\\
 && + O((x_j-x^*_j)^2)) \\
 &=& 2(\partial_{z_j}V(x^*) + O(z-z^*)) (s^*_j (x_j-x^*_j) + O((x_j-x^*_j)^2)) \\
 &=& C_j s^*_j (x_j-x^*_j) (1 + O(\rmd_0(x,x^*))),
\end{eqnarray*}
 where $C_j=2\partial_{z_j}V(x^*)=\frac{\alpha_j}{\sqrt{\Psi_0(z^*)}}> 0$. 
 Thus\refq{prop1.dV} is proved.\\
 Case 3-1 is similar since the six partial derivatives $\partial_{z_j}\tau^\pm$ are all constant
 and positive, and $\tau^\pm(z^*)>0$. (See\refq{val.tau+-anal} and\refq{val.tau--anal}.)
 \\
 Cases 2-1 and 2-2-a and 2-2-b are similar to Case 1, the sign of $C_j$ being a consequence
 of Lemma \ref{lem.dtau=0}.\\
 Let us be more precise in Case 2-2-a.
 Since $z^*_3=\nu\in (0,1)$ then $\sin(2x^*_3)\neq 0$ so, by using\refq{val0.z-z*},\refq{val1.z-z*},
 we have
\begin{eqnarray*}
 \partial_{x_3}V^2(x) &=& \partial_{z_3}\tau^-(z) \sin(2x_3) \\
 &=&  (O(z_1-z^*_1) + O(z_2-z^*_2) + \partial^2_{z_3}\tau^-(z^*)) (z_3-z^*_3)   \\
 &&  (1+ O((z_3-z^*_3))) (\sin(2x^*_3) + O(x_3-x^*_3)).
\end{eqnarray*}
 Since  $z_j-z^*_j=O((x_j-x^*_j)^2) = O(\partial_{x_j}\tau^-(z)\rmd_0(x,x^*))$ for $j=1,2$, and\\
 $z_3-z^*_3= \sin(2x^*_3) (x_3-x^*_3)(1+O(x_3-x^*_3))$ then 
\begin{eqnarray*}
 \partial_{x_3}V(x) &=& \frac{\partial_{x_3}V^2(x)}{2V(x)} = C_3 (x_3-x^*_3)  + O(d_0^2(x,x^*)) \\
 &=& C_3 (x_3-x^*_3) + |\nabla_x V(x)| O(\rmd_0(x,x^*)),
\end{eqnarray*}
 where $C_3= (2\sqrt{\tau^-(z^*)})^{-1}\partial^2_{z_3}\tau^-(z^*)\sin^2(2x_3^*)>0$. Thus\refq{prop1.dV} holds
 in Case 2-2-a too.\\
 Case 2-2-c. The computation of the derivatives $\partial_{x_j}V(x)$, $j=1,2$,
 is similar to the other cases (with $s_j=s^*_j=-1$).
 By using\refq{val1.z-z*} we have
\begin{eqnarray*}
 \partial_{x_3}V^2(x) &=& \partial_{z_3}\tau^-(z) \sin(2x_3) \\
 &=&
   \Big((O(z_1-z^*_1)+ O(z_2-z^*_2) +\partial^2_{z_3}\tau^-(z^*)) (z_3-z^*_3)+ O(z_3-z^*_3)^2 \Big)\\
&& \cdot \Big(2s^*_3 (x_3-x^*_3) + O((x_3-x^*_3)^3)\Big)   .
\end{eqnarray*}
 Hence
$$ \partial_{x_3}V(x) = C_3 (x_3-x^*_3)^3 + O(|\nabla_x V(x)|\, \rmd_0(x,x^*)), $$
 where $C_3= 2(\sqrt{\tau^-(z^*)})^{-1}\partial^2_{z_3}\tau^-(z^*)>0$. 
 Thus\refq{prop2.dV} holds too.
 
 The lemma is proved.
\subsection{Proof of Lemma \ref{lem.wdH}}
 We remember that $y^*_1\neq 0$, $z^*_2\neq 0$ and $z^*_3=0$, so,
$$  \sqrt{\Psi_0(z)}_w  = \cos(x_1)\cos(x_2) \tilde{\sqrt{\Psi_0(z)}}_{\tilde w}. $$
%
 Thanks to\refq{val.Tw} the function $x\mapsto \tilde{\sqrt{\Psi_0(z)}}_{\tilde w}=2\alpha_1z_1+2\alpha_2z_2$
 is smooth and positive in $\supp\chi^*_2$.\\
 In Case 3-3, we have $0<z^*_1<z^*_2=1$ then $\cos y^*_1\neq0$ so
 $\cos(x_1)=\cos(x^*_1)+O(\rmd_0(x,x^*))$ with $ \cos(x^*_1)\neq 0$, and
 $\cos(x_2)= -y^*_2(x_2-x^*_2)+O(x_2-x^*_2)^3$. Hence\refq{est1.xiw3_3} holds.\\
 In Case 3-2, we have $z^*_1=z^*_2$, $\cos y^*_1=0$ so
 $\cos(x_j)= -y^*_j(x_j-x^*_j)+O(x_j-x^*_j)^3$, $j=1,2$. Hence\refq{est1.xiw3_2} holds.
 
\subsection{Proof of Lemma \ref{lem1.Aadjoint}}
 We fix a representation of $x^*\in\T^3$ in $\R^3$ which we denote again $x^*$.
 Then, the multiplication by $\chi_{x^*}$ is an isometry (non surjective) from $\calH^D$
 into the Hilbert space $L^2_D(\R^3,\C^6):=L^2(\R^3,\C^6)$ equipped with the scalar product
$$
  (u,v)_{L^2_D(\R^3;\C^6)} := \int_{\R^3} <u(x), v(x)>_{\C^6,D} \rmd x
  = \int_{\R^3} <\hat D^{-1}  u(x), v(x)>_{\C^6} \rmd x.
$$
 So, we can identify $A_{x^*}$ with an unbounded symmetric operator on $L^2(\R^3;\C^6)$,
 which we denote $A_{x^*}$ again.
 
 We set $x'_j= \sqrt{C_j/2}(x_j-x^*_j)$ where the $C_j$'s are the positive constants of Section \ref{sec.p1},
 and in\refq{prop1.dV} or in\refq{prop2.dV} of Lemma \ref{lem.dV}.
 We set also $\rho'= \sqrt{x_1'^2+x_2'^2}$, $r'=\sqrt{\rho'^2+x_3'^2}$.

\medskip
 A-1)
 Let us consider Case 1 with $s_1=s_2=1$ and $s_3=-1$.
 Since $\bfbeta=0$ then we have $\bfmu=\kappa\bfepsl$, with the scalar $\kappa>0$. 
 We have
$$ p_1(x) = \rho'^2 - x_3'^2,$$
 and we set
%
$$
\begin{array}{rcl}
 p_2(x) &:=& 2 \rho' x'_3,\\
 p_3(x) &:=& \frac{(x_1',x_2')}{\rho'} \in \Sm^1 \approx \R/(2\pi\Z).
\end{array}
$$
%
 The mapping $\R^2\sauf\{(0,0)\} \ni  (x'_1,x'_2)\mapsto (\rho',p_3) \in \R^+\times\Sm^1$ is
 the polar change of coordinates.
 Since $p_1+ip_2= (\rho'+ix'_3)^2$, then the mapping $(\rho',x'_3) \mapsto (p_1,p_2)$ is
 a $\calC^\infty$-diffeomorphism from $(0,\infty)\times\R$ onto $\calO:= \R^2\sauf (\R^-\times\{0\})$.
 Thus the mapping
$$ \Phi: \: x'=(x'_1,x'_2,x'_3)\mapsto p=(p_1,p_2,p_3) $$
 is a $\calC^\infty$-diffeomorphism from $\R^{2*}\times\R$ onto $\calU:=\calO \times \Sm^1$,
 with jacobian
%
$$ J_\Phi(x')  =  - \frac{r'^2} {\rho'} . $$
%
 We set $\tilde H=L^2(\R^{2*}\times \Sm^1,\C^6; \rmd p)$ equipped with the following scalar product:
$$
 (\tilde u,\tilde v)_{\tilde H} := \int_{\R^2\times \Sm^1} <\tilde u(p), \tilde v(p)>_{\C^6,D} \rmd p.
$$
 For $\tilde u\in \tilde H$, $p\in\calU$, we set
\begin{equation}
\label{def.tildeu}
 u(x)= |J_\Phi(x')|^{1/2} \tilde u(p), \quad x'=\Phi^{-1}(p) \in \R^3,
\end{equation}
 so the transform
%
$$ T : \: L^2(\R^3,\C^6) \ni u \mapsto \tilde u \in  \tilde H $$
%
 is a bijective isometry.
 Setting $\tilde\pi(p)=\pi_2(x)$ and $\tilde\chi(p)=\chi_{x^*}(x)$, the partial derivatives
 $\partial^j_{p_1}\tilde\chi$, $j\ge 0$, are bounded in $\R^{2*}\times \Sm^1$ since
 $\tilde\chi=1$ near $p(x^*)=0$ and the function $|\nabla_x p_1|$ on $\supp \nabla\chi_{x^*}$
 is smooth and bounded by below by a positive constant.
 For example if $j=1$, we have 
$$
 \sup_{x\in B(\R^3)}
 |\partial^j_{p_1}\tilde\chi (p(x))| =
 \sup_{\frac{r}2\le \rmd_0(x,x^*)\le r} |\partial^j_{p_1}\tilde\chi (p(x))| \le C.
$$
 The projector $\tilde \pi$ is continuous but admits a singular of first order at $p=0$.
 Observing that
$$
  \frac{\nabla p_1(x)\nabla u(x)}{|\nabla p_1|^2} = \frac{\partial u}{\partial p_1} ,
$$
 and denoting by "$+$ sym." the terms of symmetrization of $\calA_{x^*}$,
 we have, for $u,v\in \calC^\infty_c((\R^2\sauf\{(x^*_1,x^*_2)\}\times\R)$, 
\begin{eqnarray*}
 (A_{x^*}u,v)_{\calHD} &=&  \int_{\R^3} i \chi_{x^*}(x) <\pi_2(y) 
 \frac{\nabla p_1(x)\nabla (\chi_{x^*}(x)\pi_2(y)u(x))}{|\nabla p_1(x)|^2},v(x)>_{\C^6,D} \rmd x + {sym.}\\
  &=&   \int_{\R^2\times \Sm^1} i <\frac{\partial (\tilde \pi \tilde u)}{\partial p_1},
   \tilde\chi^2(p) \tilde \pi(p) \tilde v(p)>_{\C^6,D}  \rmd p
    + {sym.}\\
 &=&   \int_{\R^2\times \Sm^1} i < \frac{\partial (\tilde\chi \tilde \pi \tilde u)}{\partial p_1},
  \tilde\chi \tilde \pi \tilde v>_{\C^6,D}  \rmd p
 \equiv (\tilde A_{x^*} \tilde u,\tilde v)_{\tilde H}.
\end{eqnarray*}
 The projection $\tilde\pi$ has range two.
 Since $\bfbeta=0$, we have, for $z\neq 0$, a basis of the eigenspace $\ker(H^D(x)-\sqrt{\Psi_0(z)})$
 of the form $(\varphi_1(p),\varphi_2(p)=\ove{\varphi_1(p)})^T$,
 with $\varphi_1=(q,i\sqrt{\kappa}q)$ and
 $q(p)^T\in \ker(i\sqrt{\kappa}\bfepsl M(y)-\Psi_0(z)I_3)$, where $x=x(p)$, $I_3$ denotes
 the identity matrix of size 3, and $M(y)$ is the $3\times 3$ matrix defined at\refq{def.matM}.
 Moreover we can choose $q(p)$ such that $x\mapsto q(p(x))$ is analytic in the support
 of $\chi_{x^*}$ at least, and with $<\bfepsl^{-1}q(p),q(p)>_{\C^3} = 1/2$,
 so $(\varphi_1(p),\varphi_2(p))$ is orthonormal in $\C^6$ equipped with $<,>_{\C^6,D}$. 
 We thus have $<\varphi_i(p),\varphi_j(p)>_{\C^6,D}=\delta_{i,j}$, $i,j\in\{1,2\}$, but also
\begin{eqnarray*}
 <\partial_p \varphi_1,\varphi_2>_{\C^6,D} &=&
  <\bfepsl^{-1}\partial_p q,q>_{\C^3} + <\bfmu^{-1}\partial_p(i\sqrt{\kappa}q), -i\sqrt{\kappa}q>_{\C^3} \\
 &=&   <\bfepsl^{-1}\partial_p q,q>_{\C^3} + <i\bfepsl^{-1}\partial_p q, -i q >_{\C^3}\\
 &=& 0.
\end{eqnarray*}
 Similarly, $<\varphi_1,\partial_p \varphi_2>_{\C^6,D}=0$.
 Hence we have
$$ \tilde\pi(p) \tilde u(p) = \tilde\xi_1(\tilde u)(p) \varphi_1(p)+\tilde\xi_2(\tilde u)(p) \varphi_2(p),$$ 
 where we set 
\begin{equation}
\label{def.xij}
 \tilde\xi_j(\tilde u)(p) := <\tilde u(p),\varphi_j(p)> _{\C^6,D} .
\end{equation}
 We then have
$$
 (\tilde A_{x^*}  \tilde u, \tilde v)_{\tilde H} =
   i \sum_{j=1}^2 \int_{\R^2\times \Sm^1}   \frac{\partial}{\partial p_1} (\tilde\chi\tilde\xi_j(\tilde u))
   \tilde\chi \ove{\tilde\xi_j(\tilde v)} \rmd p.
$$
 Let us set 
$$
 D(\tilde A_{x^*}) = \{\tilde u \in \tilde H; \; \tilde\chi^2 \partial_{p_1} \tilde\xi_j(\tilde u) \in
  L^2(\R^2\times\Sm^1,\C; \rmd p),\: j=1,2 \}. 
$$
 Let us show that $D(\tilde A^*_{x^*})=D(\tilde A_{x^*})$.
 Let $\tilde v\in D(\tilde A^*_{x^*})$, so we have:
\begin{equation}
\label{ine.DA*}
 |(\tilde A_{x^*} \tilde u,\tilde v)_{\tilde H}| \le C\|\tilde u\|_{\tilde H}, \quad  \forall \tilde u\in D(\tilde A_{x^*}) ,
\end{equation}
 that is,
%
$$
 |\sum_{j=1}^2 \int_{\R^2\times \Sm^1}  \frac{\partial \tilde\xi_j(\tilde u)}{\partial p_1} \tilde\chi^2(p)
  \ove{\tilde\xi_j(\tilde v)} \rmd p| 
 \le C\|\tilde u\|_{\tilde H}, \quad  \forall \tilde u\in D(\tilde A_{x^*}) .
$$
%
 We fix $j\in\{1,2\}$ and choose $\tilde u(p)=f(p_1)g(p_2,p_3)\varphi_j(p)$ in the above estimate
 with arbitrary $f\in H^1(\R;\C;\rmd p_1)$ and $g\in L^2(\R\times\Sm^1;\C;\rmd p_2\rmd p_3)$.
 Then $\|\tilde u\|_{\tilde H}\le C \|f\|_{H^1(\C)} \|g\|_{L^2(\R\times\Sm^1)}$ so we have
\begin{eqnarray*}
 |\int_{\R^2\times \Sm^1}  \frac{\partial f(p_1)}{\partial p_1} g(p_2,p_3) \tilde\chi^2(p)
  \ove{\tilde\xi_j(\tilde v)} \rmd p| 
 \le C\|f\|_{H^1(\R)} \|g\|_{L^2(\R\times\Sm^1)} , \\
 \quad  \forall f\in H^1(\R;\rmd p_1), \: g\in L^2(\R\times\Sm^1,\rmd p_2\rmd p_3).
\end{eqnarray*}
 It shows that
$$
 K(p_1) :=  \int_{\R\times \Sm^1}  \tilde\chi^2(p) \tilde\xi_j(\tilde v) g(p_2,p_3)
  \rmd p_2 \rmd p_3 \in H^1(\R;\C;\rmd p_1)
$$
 with
$$ \|  \frac{\partial}{\partial p_1} K(p_1)\|_{L^2(\R)} \le C\|g\|_{L^2(\R\times\Sm^1)} .$$
 But we have
\begin{eqnarray*}
 \frac{\partial}{\partial p_1} K(p_1) &=&  \int_{\R\times \Sm^1}  \tilde\chi^2(p) \frac{\partial}{\partial p_1} 
   \tilde\xi_j(\tilde v) g(p_2,p_3)  \rmd p_2 \rmd p_3 + L(p_1),\\
 L(p_1) &:=& \int_{\R\times \Sm^1} \tilde\xi_j(\tilde v) ( \frac{\partial}{\partial p_1} \tilde\chi^2(p))  g(p_2,p_3)  \rmd p_2 \rmd p_3,
\end{eqnarray*}
with $\|L_1\|_{L^2(\R)} \le C\|g\|_{L^2(\R\times\Sm^1)}$. Hence we have
$$
 \tilde\chi^2 \frac{\partial}{\partial p_1} \tilde\xi_j(\tilde v) \in L^2(\R^2\times\Sm^1,\C; \rmd p).
$$
 Thus, $\tilde v\in D(\tilde A_{x^*})$ and so $\tilde A_{x^*}$ is self-adjoint.
 Consequently, $A_{x^*}$ with domain $T^{-1}(D(\tilde A_{x^*}))$ is a self-adjoint operator.

\medskip
 Case 1 with the general situation $s_1s_2s_3=-1$ is similar.\\
 Cases 2-1 and 2-2-a and 2-2-b and 2-2-d and 2-2-d and 2-2-e, with $s_1s_2s_3=-1$,
 are similar, except that the projection $\tilde \pi$ ($=\pi_1^+(y)$ or $=\pi_1^-(y)$) has range one,
 which simplifies the proof.

\medskip
 Case 3-1 with $s_1s_2s_3=-1$.
 We set $\pi_1^\pm A_{x^*}\pi_1^\pm =: A^\pm$ so  $A_{x^*}=\sum_\pm A^\pm$
 with $D(A^\pm)=\calD_0$. We prove that $A^\pm$ is essentially self-adjoint on $\calHD$.
 We set
$$
 k^\pm(x) :=  
  \frac{|\nabla_x p_1(x)|} 
  {|\nabla_x p_1(x) \cdot \nabla_x \sqrt{\tau^\pm(z)}|^{1/2}}.
$$
 Thanks to Lemma \ref{lem.dV} we have 
$$ k^\pm(x) = 1+O(\rmd_0(x,x^*)). $$
 Thus $k^\pm(x)$ is defined for $x\simeq x^*$ and $x\neq x^*$, extends as
 a positive lipschitzian function near $x^*$.
 We then consider the same transforms than in Case 1 with $\tilde H^\pm$ replacing
 $\tilde H$ so we have
\begin{eqnarray*}
 (A^\pm u^\pm,v^\pm)_{\calHD} &=&
   i \int_{\R^2\times \Sm^1}  
   <\frac{\partial}{\partial p_1} (\tilde\chi \tilde u^\pm),
   \tilde\chi \ove{\tilde v^\pm}>_{\C^6,D}\; \rmd p\\
 &\equiv &  (\tilde A^\pm \tilde u^\pm, \tilde v^\pm)_{\tilde H} ,
\end{eqnarray*}
 where we set $\tilde A^\pm := i \tilde\chi \circ \frac{\partial}{\partial p_1} \circ \tilde\chi$,
 and
$$
  \tilde u^\pm(p) :=  |\nabla_x p_1(x)| |J_\Phi(x')|^{-1/2} \tilde k^\pm(p) u(x) 
  \quad x'=\Phi^{-1}(p) \in \R^3,
$$
 and $\tilde k^\pm(p):=k^\pm(x)$, $\tilde\pi_1^\pm(p):=\pi_1^\pm(y)$, $\tilde\chi(p):= \chi_{x^*}(x)$.
 Thus, as in Case 2-1 with $\pi_j s_j=1$, $A^\pm=\pi_1^\pm A^\pm \pi_1^\pm$
 is essentially self-adjoint on $\calHD$.
 We denote by $D^\pm$ the domain of the self-adjoint extension of $A^\pm$,
 so $D^\pm = \{u\in \calHD$; $A^\pm u\in \calHD\}$.
 Then, $A_{x^*}$ extends as a symmetric operator, $A'_{x^*}=A_{x^*}$ with domain
 $D(A'_{x^*}):=D^+\cap D^-$. 
 Now, let $v\in D((A'_{x^*})^*)$ so
$$ |(A_{x^*}u,v)_{\calHD}| \le C\|u\| \quad \forall u \in D(A'_{x^*}).$$
 Let $u\in D^\pm$. 
 Then $A_{x^*}\pi_1^\pm u=A^\pm u\in \calHD$, so $\pi_1^\pm u\in D(A'_{x^*})$.
 Thus
$$
   |(A^\pm u, v)_{\calHD}| =  |(A_{x^*} \pi_1^\pm u,v)_{\calHD}|
     \le C\|\pi_1^\pm u\|  \le C\|u\|  \quad \forall u \in D^\pm.
$$
 Hence $v\in D^\pm$. Thus, $v\in D(A'_{x^*})$, so $A'_{x^*}$ is self-adjoint.

\medskip
 Case 2-2-c.  We have
$$ p_1(x) = \rho'^2 - \frac12 x_3'^4,$$
 and we set
$$
\begin{array}{rcl}
 p_2(x) &:=& x_2' e^{-\frac1{2x_3'^2}},\\
 p_3(x) &:=& x_1' e^{-\frac1{2x_3'^2}},
\end{array}
$$
 with $p_2|_{x_3'=0}=p_3|_{x_3'}=0$, so $p_2,p_3\in \calC^\infty(\R^3)$.
 Then,
\begin{eqnarray*}
 \nabla p_1 &=& 2(x'_1,x'_2,-x_3'^3),\\
 \nabla p_2 &=& e^{-\frac1{2x_3'^2}} (0,1,x'_2/x_3'^3),\\
 \nabla p_3 &=& e^{-\frac1{2x_3'^2}} (1,0,x'_1/x_3'^3),
\end{eqnarray*}
 $\nabla p_1 \perp \nabla p_j$, $j=2,3$, and the Jacobian of the mapping
 $\Phi$: $x'\mapsto p$ is
$$
 J_\Phi(x') =   \frac{\rho'^2}{x_3'^3} e^{-\frac1{x_3'^2}}.
$$
%
 It does not vanish if $x'_3\neq 0$ or $\rho'\neq 0$.
 Let us invert $\Phi$. The sign of $x'_3$ is not determined by $p$ so we consider
$$ \Phi^\pm: \quad \R^{2*}\times\R^{\pm*} \ni x'\to p\in \R \times \R^{2*} .$$
 Let $p\in \R \times \R^{2*}$. We have $x'_1=p_3 e^{\frac1{2x_3'^2}}$, $x'_2=p_2 e^{\frac1{2x_3'^2}}$
 so $x'_3$ satisfies the equation $F(x_3'^2) = p_1$ where we set
$$ F(t) : =  (p_2^2 + p_3^2) e^{1/t} - \frac12 t^2 , \quad t>0.$$
 Since $F'>0$, $F(+\infty)=-\infty$ and $F(0^+)=+\infty$, then the equation is uniquely solvable
 by some $t_0>0$ so we obtain $x'_3=\pm\sqrt{t_0}\in\R^*$.
 Hence $\Phi^\pm$ is bijective.
 We let the lector to check that $\Phi^\pm$ is an homeomorphism from $\R^{2*}\times \R^{\pm*}$
 into $ \R \times \R^{2*}$.
 Hence, $\Phi^\pm$ is a $\calC^\infty$-diffeomorphism from $\R^{2*}\times \R^{\pm*}$ into $ \R \times \R^{2*}$\\
 We set the Hilbert spaces $\tilde H^\pm = L^2(\R^{2*}\times\R^{\pm*},\C^6; \rmd p)$ 
 equipped with the scalar product
$$
 (\tilde u^\pm,\tilde v^\pm)_{\tilde H^\pm} := \int_{\R^{2*}\times\R^{\pm*}}
  <\tilde u^\pm(p), \tilde v^\pm(p)>_{\C^6,D} \rmd p,
$$
 then $\tilde H := \tilde H^+ \oplus \tilde H^-$.
 For $\tilde u=(\tilde u^+,\tilde u^-) \in \tilde H$, $x'\in \R^{2*}\times\R^{\pm*}$, we set
%
$$ u(x)= |J_\Phi(x')|^{1/2} \tilde u^\pm(\Phi^\pm(x')), $$
%
 so the transform
$$ T : \:  L^2(\R^3) \ni u \mapsto \tilde u \in  \tilde H $$
 is a bijective isometry (up to a nonzero constant multiplicative factor).

 Setting again $\tilde\chi(p)=\chi_{x*}(x)$, $\tilde\pi(p)=\pi_2(x)$, we have,
 for $u,v\in \calC^\infty_c( \R^2\sauf\{(x^*_1,x^*_2)\}\times \R\sauf\{x^*_3\} )$, 
$$
 (A_{x^*}u,v)_{\calHD} =  \int_{\R^3} i < \frac{\partial (\tilde\chi \tilde \pi \tilde u)}{\partial p_1},
  \tilde\chi \tilde \pi \tilde v>_{\C^6,D}  \rmd p \equiv (\tilde A_{x^*} \tilde u,\tilde v)_{\tilde H}.
$$
 The projection $\tilde\pi$ has range one so this case is similar to case 2-1,
 so $A_{x^*}$ is essentially self-adjoint.

\medskip
  A-2) Let us treat Case 1 with $s_1=s_2=-1$ (and $s_3=-1$).
 We observe that
$$ p_1(x) = \sum_{j=1}^3 (x'_j)^2 \ge 0 . $$ 
%
 (where we set $x'_j= \sqrt{C_j/2}(x_j-x^*_j)$).
We use the spherical coordinates: 
 $x'=\rho\om$ with $\rho= \sqrt{{x'}_1^2 + {x'}_2^2 + {x'}_3^2}>0$, $\om=\rho^{-1}x'\in\Sm^2$,
 so we have $p_1=\rho^2$ and choose two other coordinates, $p_2,p_3$,
 on the sphere  $\Sm^2$.\\
 We then follow the above method (Case 1 with $s_1=s_2=1=-s_3$) with similar notations,
 notably, with the same couple $(\varphi_1,\varphi_2)$ and coordinates $\tilde\xi_j$
 (defined by\refq{def.xij}) $j=1,2$.
 The mapping
$$ \Phi: \: x'=(x'_1,x'_2,x'_3)\mapsto p=(p_1,p_2,p_3) $$
 is a $\calC^\infty$-diffeomorphism from $\R^{3*}$ onto $\R^{+*}\times\Sm^2$.
 The jacobian of $\Phi$ has the form
$$ J_\Phi(x')  =  j(p_2,p_3)\sqrt{p_1} , $$
 where $j$ is a positive smooth function on $\Sm^2$.
 We set $\tilde H=L^2(\R^{+*}\times \Sm^2,\C^6; \rmd p)$ equipped with the following scalar product:
$$
 (\tilde u,\tilde v)_{\tilde H} := \int_{\R^{+*}\times \Sm^2} <\tilde u(p), \tilde v(p)>_{\C^6,D} \rmd p.
$$
 For $\tilde u\in \tilde H$, $p\in\calU$, we consider the transformation defined by\refq{def.tildeu}  
 between $u$ and $\tilde u$ so it is a bijective isometry (up to a positive constant multiplicative factor)
 between $L^2(\R^3,\C^6)$ and $\tilde H$ which we denote $T$ again.
 Setting $\tilde \pi(p)=\pi_2(y)$, we have $\tilde\chi\in \calC^\infty_c(\R^{+*}\times \Sm^2)$
 and $\tilde\chi=1$ near $p(x^*)=0$.
 We thus have, for $u,v\in \calC^\infty_c(\R^{+*}\times \Sm^2,\C^6)$, 
$$
 (A_{x^*}u,v)_{\calHD} = \sum_{j=1}^2 \int_{\R^{+*}\times \Sm^2}  i\tilde\chi 
   \frac{\partial (\tilde\chi \tilde\xi_j(\tilde u))}{\partial p_1}
   \ove{\tilde\xi_j(\tilde v)} \rmd p  \equiv (\tilde A_{x^*} \tilde u,\tilde v)_{\tilde H}.
$$
 The above formula defines the symmetric operator $\tilde A_{x^*}$ on $\tilde H$ with domain
 $\calC^\infty_c(\R^{+*}\times \Sm^2)$.
 Thus, $\tilde A_{x^*}$ extends to the operator with the same formula defined on
\begin{eqnarray*}
 D(\tilde A_{x^*}) &=& \calH_{1,0} := \{\tilde u \in \tilde H;\; \tilde\chi^2\partial_{p_1}\tilde\xi_j(\tilde u)
  \in L^2(\R^{+*}\times \Sm^2,\C; \rmd p),\\
 &&  \; \tilde\chi^2\partial_{p_1}\tilde\xi_j(\tilde u)|_{p_1=0}=0,\; j=1,2\}.
\end{eqnarray*}
 Let us prove that the default index $N^+$ of $\tilde A_{x^*}$ vanishes.
 Firstly, observe that
\begin{equation}
\label{val1.DtildeA*}
 D((\tilde A_{x^*})^*) = \calH_1 := \{\tilde u \in \tilde H;\; \tilde\chi^2\partial_{p_1}\tilde\xi_j(\tilde u)
  \in L^2(\R^{+*}\times \Sm^2,\C; \rmd p), \; j=1,2\}.
\end{equation}
 In fact an integration by parts shows that $D((\tilde A_{x^*})^*)$ contains $ \calH_1$.
 Then, let $\tilde v\in D((\tilde A_{x^*})^*)$ so\refq{ine.DA*} holds.
 As in Case 1 with $\Pi_{k=1}^3 s_k=-1$, let $j\in \{1,2\}$ and choose
 $\tilde u(p)=f(p_1)g(p_2,p_3)\varphi_j(p)$ with arbitrary $f\in H^1(\R^{+*};\C;\rmd p_1)$
 and $g\in L^2(\Sm^2;\C;\rmd p_2\rmd p_3)$, so we have
\begin{eqnarray*}
 |\int_{\R^+\times \Sm^2}  \frac{\partial f(p_1)}{\partial p_1} g(p_2,p_3) \tilde\chi^2(p)
  \ove{\tilde\xi_j(\tilde v)} \rmd p| 
 \le C\|f\|_{H^1(\R^+)} \|g\|_{L^2(\Sm^2)} , \\
 \quad  \forall f\in H^1(\R;\rmd p_1), \: g\in L^2(\Sm^2,\rmd p_2 \rmd p_3).
\end{eqnarray*}
 It implies $\frac{\partial }{\partial p_1} (\tilde\chi^2 \tilde\xi_j(\tilde v))\in L^2(\R^{+*}\times \Sm^2,\C; \rmd p)$,
 then $\tilde\chi^2 \frac{\partial }{\partial p_1} \tilde\xi(\tilde v)\in $ $L^2(\R^{+*}\times \Sm^2,\C; \rmd p)$,
 so $\tilde v\in \calH_1$. Therefore,\refq{val1.DtildeA*} is proved.
 Now, let $\tilde v\in D((\tilde A_{x^*})^*)$ such that $(\tilde A_{x^*})^* \tilde v=i\tilde v$. 
 Thus we have $(-i(\tilde A_{x^*})^* \tilde v,\tilde v)_{\tilde H}= (\tilde v,\tilde v)_{\tilde H}$.
 An integration by parts (according to the variable $p_1$) shows that $v=0$.
 Consequently, $A_{x^*}$ with domain $T^{-1}(D(\tilde A_{x^*}))$ is a maximal symmetric operator
 with the default index $N^+=0$. (See also\cite[Lemma 1.3]{GEO.COM} for results of the same kind).



\medskip
 C) (Cases 3-2 and 3-3).
 We set 
$$ p_1 = \cos(x_1)\cos(x_2), $$
 so $p_1$ vanishes at $x=x^* \in \calX_2^*$ (such that $z^*=(\frac{\beta_2}{\beta_1},1,0)$).
 We have 
$$ \nabla_x p_1 = -w = (\sin x_1 \cos x_2, \sin x_2 \cos x_1,0).$$
 We set
$$ p_2 = \frac{\sin x_1}{\sin x_2}- \frac{\sin x^*_1}{\sin x^*_2}, \quad p_3=x_3,$$
 so $\nabla_x p_i\cdot \nabla_x p_j=0$ if $i\neq j$ and the jacobian of the map
 $\Phi$: $x\mapsto p=(p_1,p_2,p_3)$ is
$$ J_\Phi(x) = \Pi_{j=1}^3 \nabla_x p_j .$$
 We have $w\cdot \nabla_x u=u_w = -|\nabla p_1|^2 \partial_{p_1}u$ and, thanks to\refq{val.fw},
 $\sqrt{\Psi_0(z)}_w = p_1\tilde{\sqrt{\Psi_0(z)}}_{\tilde w}$ where $\tilde{\sqrt{\Psi_0(z)}}_{\tilde w}$
 is analytic and does not vanishes at $x^*$.

 Case 3-3. We have $\nabla_x p_1(x^*)\neq 0$ and $J_\Phi(x^*)\neq 0$, so $\Phi$
 is a local diffeomorphism from a neighborhood of $x^*$ in $\R^3$ into a neighborhood
 of $0_{\R^3}$ in $\R^3$.
 Hence, we have
$$
 (A_{x^*}u,v)_{\calHD} = -i  \int_{\R^3} \frac{k(x)}{p_1} < \frac{\partial(\chi_2 \pi_2 u)}{\partial p_1}, 
  \chi_2 \pi_2 v>_{\C^6,D} \rmd p + {\rm sym},
$$
 where $k$ is smooth with $k(x^*)>0$.
 As in the above cases, we thus set $q=(q_1,q_2,q_3)$, $q_1= p_1|p_1|$, $q_j=p_j$ for $j=2,3$, 
 $\tilde \chi(q)=\chi_{x^*}(x)$, $\tilde\pi(q)=\pi_2(y)$, $\tilde u(q) = |k(x)|^{1/2} u(x)$.
 We then obtain
$$
 (A_{x^*}u,v)_{\calHD}
 = -i \int_{\R^3} \sgn(q_1) < \frac{\partial(\tilde \chi \tilde \pi \tilde u )}{\partial q_1}, 
  \tilde \chi_2 \tilde \pi_2 \tilde v>_{\C^6,D} \rmd q
 \equiv (\tilde A_{x^*} \tilde u,\tilde v)_{\tilde H},
$$
 where $\tilde H = L^2(\R^3,(\C^6,<,>_{\C^6,D}); \rmd q)$ is a usual Hilbert space.
 The projection $\tilde\pi$ has range two so Case 3-3 is similar to A-2) with
 $A_{x^*}$ replaced by $-A_{x^*}$.
 Hence, $A_{x^*}$ has default index $N^-=0$ and admits a maximal symmetric extension.

\medskip
 Case 3-2. We have $\nabla_x p_1(x^*)= 0$ so $J_\Phi(x^*)= 0$.
 Let us "invert" $x\mapsto p$. For simplicity we assume $y^*_1=y^*_2=1$.
 Set $x'_j=x_j-x^*_j$ for $j=1,2$.
 Since $\sin x_j\simeq 1- (x'_j)^2/2$ and $\cos(x_j)\simeq -x'_j$ for $j=1,2$ then
 $p_1\simeq x'_1x'_2 $ and $-2p_2\simeq (x'_1)^2 -(x'_2)^2 $.
 Thus $(x'_1+ix'_2)^2\simeq 2i(p_1+ip_2)$.\\
 It means that we have the same transform than in Case A-1), i.e., there exists
 an Hilbert space $\tilde H$ and an isometry $L^2(\R^3)\ni u\to \tilde u \in \tilde H$ such that
$$
 (A_{x^*}u,v)_{\calHD} = -i \int_{\R^2\times\Sm^1} \sgn(q_1)
  < \frac{\partial(\tilde \chi \tilde \pi \tilde u )}{\partial q_1}, 
  \tilde \chi_2 \tilde \pi_2 \tilde v>_{\C^6,D} \rmd q,
$$
 where $\tilde\chi(q):=\chi_{x^*}(x)$, $\tilde\pi(q):=\pi_2(y)$.
 Hence, as in Case 3-3, $A_{x^*}$ has default index $N^-=0)$ and
 admits a maximal symmetric extension.

\subsection{Notations}
\label{sec.notation}
\begin{eqnarray*}
 y &=& \sin x \quad (y_j=\sin x_j), \hspace{4cm} \\
 z &=& y^2 \quad (z_j=y_j^2),\\
 \bfbeta &=& \bfepsl \times \bfmu = (\beta_1,\beta_2,\beta_3),\\
 \bfalpha &=& (\alpha_1,\alpha_2,\alpha_3),\quad
  \alpha_1 := (\epsl_2  \mu_3 + \epsl_3 \mu_2)/2 \quad \mbox{and c.p.},\\
 \gamma_1 &=&  \epsl_2 \epsl_3 \mu_2\mu_3  \quad \mbox{and c.p.},\\
 \nu &=&  \frac{2\alpha_3\sqrt{\beta_1\beta_2} - \sqrt{\gamma_3}(\beta_1 + \beta_2)}
 {|\beta_3| \sqrt{\gamma_3}},\\
\Phi_0 &=& 
  \epsl_2 \epsl_3  \mu_2\mu_3 z_1^2  + (\epsl_2 \epsl_3 \mu_1\mu_3
   + \epsl_1 \epsl_3 \mu_2\mu_3)z_1 z_2 + {\rm c.p.},\\
  K_0 &=&    \frac14\left( \beta_1^2 z_1^2 + \beta_2^2 z_2^2 
   + \beta_3^2 z_3^2 - 2\beta_1\beta_2 z_1z_2 
   - 2\beta_2\beta_3 z_2z_3 - 2\beta_1\beta_3 z_1z_3 \right),\\
\quad \Psi_0 &=& \bfalpha\cdot z \: = \,  \alpha_1 z_1 + \alpha_2 z_2 + \alpha_3 z_3 ,\\
 \tau^\pm &=&  \Psi_0\pm\sqrt{K_0} ,\\
 \textcolor{red}{\lambda_\pm} &=& \textcolor{red}{\lambda_+\max\{\sqrt{\tau^\pm(z)} }| \; z \in [0,1]^3\},\\
 P_M &:& \R\times\T^3 \ni (\lambda,x) \mapsto x\in \T^3,\\
 P_\R &:& \R\times\T^3 \ni (\lambda,x) \mapsto \lambda\in \R,\\
 \Sigma &=& \{(\lambda,x)\,| \; \lambda\in \sigma(H^D(x))\}  = \cup_{j=1}^6 \Sigma_j,\\
 \calX_j &= & P_M(\Sigma_j),\quad j=1,2,\\
 \Sigma_1 &=& \{(\lambda,x)\in \Sigma; \: K_0(z)\neq0\: \},\\
 \Sigma_1^{*\pm} &=& \{(\lambda,x)\in \Sigma_1; \: \lambda^2=\tau^\pm(z), \nabla_x\tau^\pm(z)=0\},\\
 \Sigma_1^* &=&  \Sigma_1^{*+} \cup  \Sigma_1^{*-},\\
 \Sigma_2 &=& \{(\lambda,x)\in \Sigma; \: z\neq 0 ,\; K_0(z)=0, \: \lambda^2=\Psi_0(z)\},\\
 \Sigma_2^* &=& \{(\lambda,x)\in \Sigma_2; \; \mbox{$\nabla_x\Psi_0(z)$ is normal to $\calX_2$ at $x$}\},\\
 \calX_0  &=& \{x\in \T^3; \; z=0\}, \\
 \T_0^3 &=& \T^3 \sauf \calX_0,\\
 Z_{\{0,1\}} &=& \{0,1\}^3,\\
 Z_{\{0,1\}}^* &=& Z_{\{0,1\}} \sauf \{(0,0,0)\}, \\
 X_{\{0,1\}} &=& \{ x\in \T^3; \; z\in Z_{\{0,1\}}\},\\
 X_{\{0,1\}}^* &=& \{ x\in \T^3; \; z\in Z_{\{0,1\}}^*\},\\
 \calT &=&  \calT_1 \cup  \calT_2 \cup \{0\},\\
 \calT_j &=& P_\R(\Sigma_j^*), \\
 \calT_1^\pm &=& P_\R(\Sigma_1^{*\pm}),\\
 \calX_j^* &=& P_M(\Sigma_j^*),\\
 \calX_1^{*\pm} &=& P_M(\Sigma_1^{*\pm}),\\
 \calX^* &=& \calX_1^* \cup \calX_2^*,\\
 \calZ_1^{*\pm} &=& \sin^2(\calX_1^\pm),\\
 \sin^2(\calX^*)  &=&  \sin^2(\calX^*_1) \cup  \sin^2(\calX^*_2),\\
 A_{out} &=& \calA_{out} + \calA_{out}^*,\\
 A_\phi &=& A_{out} + A_{in} = A_0 + A_1 + A_2,\\
 A_0 &=& A_{out}.
\end{eqnarray*}


\begin{thebibliography}{99}

\bibitem{AMR.GRO} W. Amrein, A. Boutet de Monvel, and V. Georgescu,
{\it $C_0$-Groups, Commutator Methods and Spectral Theory of N-Body Hamiltonins},
 in Progress in Mathematics, Vol. 135, Birkh\"auser, Basel, 1996.


%
%

\bibitem{GEO.COM} V. Georgescu, C. G\'erard and J.S. Moller, {\it Commutators, $C_0$-semigroups
and resolvent estimates}, Journal of Functional Analysis {\bf 216}, 303-361 (2004).

\bibitem{GER.MOU} C. G\'erard and F. Nier, {\it The Mourre theory for analytically
 fibered operators}, Journal of Functional Analysis {\bf 152}, 202-219 (1998).

\bibitem{GER.MOU2} C. G\'erard and F. Nier, {\it Mourre theory for analytically fibered operators revisited}, to appear (2023).

\bibitem{INV.ISO1} H. Isozaki and E. Korotyaev, {\it Inverse problems, trace formulae
 for discrete Schr\"odinger operators}, Ann. Henri Poincar\'e, 13 (2012), 751-788.

\bibitem{ISO.CON} H. Isozaki, A. Jensen, {\it Continuum limit for lattice Schro\"dinger operators},
Reviews in Mathematical Physics, Vol. 34, no. 2 (2022).


\bibitem{ISO.INV} H. Isozaki, H. Morioka, {\it  Inverse scattering at a fixed energy for Discrete
 Schr\"odinger Operators on the square lattice} Annales de l'Institut Fourier,
 Tome 65 (2015) no. 3, pp. 1153-1200.


\bibitem{ISO.REL} H. Isozaki, H. Morioka, {\it A Rellich type theorem for discrete
 Schrodinger operators}, (2013).
 

\bibitem{MOU.ABS} E. Mourre, {\it Absence of singular continuous
 spectrum for certain self-adjoint operators}, {\em Comm. Math. Phys.} {\bf 78} (1981), 391-408.


\bibitem{PER.SPE} P. Perry, I. M. Sigal, and B. Simon, {\it Spectral analysis of N-body
 Schr\"odinger operators}, {\em Ann. of Math.} (1981), 519-567.

\bibitem{REE.MET} M. Reed and B. Simon, {\it Methods of modern mathematical physics.
 IV. Analysis of operators}, Academic Press (Harcourt Brace Jovanovich, Publishers),
  New York-London, 1978.

\bibitem{AND.SPE} K. Ando, H. Isozaki and H. Morioka, {\it Spectral properties of
 Schr\"odinger operators on perturbed lattices}, Ann. Henri Poincar\'e 17 (2016), 2103-2171.



\end{thebibliography}
\end{document}